\DeclareMathOperator{\conv}{conv}
\DeclareMathOperator{\proj}{proj}
\newcommand{\Z}{{\mathbb  Z}}
\newcommand{\Q}{{\mathbb  Q}}
\newcommand{\C}{{\mathbb  C}}
\newcommand{\R}{{\mathbb  R}}
\let\ve=\mathbf
\newcommand\OS{Opt.}
\newtheorem{algorithm}{Algorithm}
\newcommand\ifpdf
\newcommand\psref[1]{\textbf{(\S\,\ref{#1})}}  
\begin{document}

\title*{Nonlinear Integer Programming 
  \thanks{To appear in: 
M. J\"unger, T. Liebling, D. Naddef, G. Nemhauser,
W. Pulleyblank, G.~Reinelt, G.~Rinaldi, and L. Wolsey (eds.), 
\emph{50 Years of Integer Programming 1958--2008:
The Early Years and State-of-the-Art Surveys}, Springer-Verlag, 2009, ISBN 3540682740.
}}

\titlerunning{Nonlinear Integer Programming}
\author{Raymond Hemmecke, Matthias K\"oppe, Jon Lee and Robert Weismantel}

\institute{Raymond Hemmecke \at Otto-von-Guericke-Universit\"at Magdeburg,
  FMA/IMO, Universit\"atsplatz~2, 39106 Magdeburg, Germany, \email{hemmecke@imo.math.uni-magdeburg.de}
\and Matthias K\"oppe \at University of California,
  Davis, Dept.~of
  Mathematics, One Shields Avenue, Davis, CA, 95616, USA,
  \email{mkoeppe@math.ucdavis.edu}
\and Jon Lee \at IBM T.J. Watson Research Center,
  PO Box 218, Yorktown Heights, NY, 10598, USA,
  \email{jonlee@us.ibm.com}
\and Robert Weismantel \at Otto-von-Guericke-Universit\"at Magdeburg,
  FMA/IMO, Universit\"atsplatz~2, 39106 Magdeburg, Germany, \email{weismant@imo.math.uni-magdeburg.de}}
%
%
\maketitle

\makeatletter
\long\def\@abstract#1{\noindent\textbf{\abstractname.} #1\par
}
\makeatother

\vspace{-15ex}
\abstract{Research efforts of the past fifty years have led to a development of
linear integer programming as a mature discipline of mathematical
optimization. Such a  level of maturity has not been reached
when one considers nonlinear systems subject to integrality requirements
for the variables. This chapter is dedicated to this topic.

The primary goal is a study of a simple version of general
nonlinear integer problems, where all constraints are still linear. 
Our focus is on the computational complexity of the problem, 
which varies significantly
with the type of nonlinear objective function in
combination with the underlying combinatorial structure.  
Numerous boundary cases of complexity emerge, which
sometimes surprisingly lead even to polynomial time algorithms.

We also cover recent successful approaches for more general classes of
problems.  Though no positive theoretical efficiency results are available,
nor are they likely to ever be available, these seem to be the currently most 
successful and interesting approaches for solving practical problems.

It is our belief that the study of algorithms motivated by theoretical
considerations and those motivated by our desire to solve practical instances
should and do inform one another. So it is with this viewpoint that we present
the subject, and it is in this direction that we hope to spark further
research.  }

\clearpage
\section{Overview}
\label{s:overview}

In the past decade, \emph{nonlinear} integer programming has gained a lot of
mindshare.  Obviously many important applications demand that we be able to
handle nonlinear
objective functions and constraints.  Traditionally, nonlinear mixed-integer
programs have been handled in the context of the field of global optimization,
where the main focus is on numerical algorithms to solve nonlinear continuous optimization
problems and where integrality constraints were considered as an afterthought,
using branch-and-bound over the integer variables.  In the past few years,
however, researchers from the field of integer programming have increasingly
studied nonlinear mixed-integer programs from their point of view.
Nevertheless, this is generally considered a very young field, and most of the
problems and methods are not as well-understood or stable as in the case of
linear mixed-integer programs.

Any contemporary review of nonlinear mixed-integer programming will therefore be
relatively short-lived.  For this reason, our primary focus is on a classification of
nonlinear mixed-integer problems from the point of view of computational
complexity, presenting theory and algorithms for the efficiently solvable
cases.  The hope is that at least this part of the chapter will still be
valuable in the next few decades.  However, we also cover recent successful approaches
for more general classes of problems.
Though no positive theoretical efficiency results are available --- nor are they
likely to ever be available, these seem to be the
currently most successful and interesting approaches for
solving practical problems. It is our belief that
the study of algorithms motivated by theoretical considerations and
those motivated by our desire to solve practical instances should
and do inform one another. So it is with this viewpoint that
we present the subject, and it is in this direction that
we hope to spark further research.

Let us however also remark that the selection of the material that we discuss in
this chapter is subjective. There are topics that some researchers associate
with ``nonlinear integer programming'' that are not covered here. Among them are
pseudo-Boolean optimization, max-cut and quadratic assignment as well as general
0/1 polynomial programming. There is no doubt that these topics are
interesting, but, in order to keep this chapter
focused, we refrain from going into these topics.  Instead
we refer the interested reader to the references \cite{GoemansWilliamson95b}
on max-cut, 
\cite{buchheim-rinaldi:2007} for recent advances in general 0/1 polynomial
programming, and the excellent surveys \cite{boros-hammer:2002} on
pseudo-Boolean optimization and
\cite{Pardalos+Rendl+Wolkowicz:1994,Burkard+Cela+Pardalos+Pitsoulis:1998} on
the quadratic assignment problem.

\smallbreak

A general model of mixed-integer programming could be written as
\begin{equation}
\begin{aligned}
  \hbox{max/min}\quad & f(x_1,\dots,x_n)\\
  \hbox{s.t.}\quad & g_1(x_1,\dots,x_n) \leq 0 \\
  & \quad\vdots \\
  & g_m(x_1,\dots,x_n) \leq 0 \\
  & \mathbf x\in\R^{n_1} \times \Z^{n_2},
\end{aligned} \label{eq:nonlinear-over-nonlinear}
\end{equation}
where $f,g_1,\dots,g_m\colon \R^n\to\R$ are arbitrary nonlinear functions.
However, in parts of the chapter, we study a rather restricted model of nonlinear
integer programming, where the nonlinearity is confined to the objective
function, i.e., the following model:
\begin{equation}
\begin{aligned}
  \hbox{max/min}\quad & f(x_1,\dots,x_n)\\
  \hbox{subject to} \quad & A \mathbf x \leq \mathbf b\\
  & \mathbf x\in \R^{n_1}  \times \Z^{n_2},
\end{aligned} \label{eq:nonlinear-over-polyhedron}
\end{equation}
where $A$ is a rational matrix and $\mathbf b$ is a rational vector.
It is clear that this model is still NP-hard, and that it is much more expressive
and much harder to solve than integer linear programs. \smallbreak

We start out with a few fundamental hardness results that help
to get a picture of the complexity situation of the problem.

Even in the pure continuous case, nonlinear optimization is known to be hard.
\begin{theorem}
  Pure continuous polynomial optimization over polytopes
  ($n_2=0$) in varying dimension is NP-hard.
  Moreover, there does not exist a fully polynomial time
  approximation scheme ({\small FPTAS}) (unless
  $\mathrm{P}=\mathrm{NP}$).
\end{theorem}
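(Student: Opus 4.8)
The plan is to prove both assertions at once with a single polynomial-time reduction from the \emph{maximum clique} problem, which is NP-hard. The key device is the classical Motzkin--Straus formulation, which encodes the clique number $\omega(G)$ of a graph $G=(V,E)$ on $n$ vertices as the optimal value of a degree-$2$ polynomial program over the standard simplex. Writing $\Delta_n=\{\,\ve x\in\R^n: \ve x\ge \ve 0,\ \sum_{i=1}^n x_i=1\,\}$, I would use the identity
\[
  \max\Bigl\{\,\sum_{\{i,j\}\in E} 2\,x_i x_j \;:\; \ve x\in\Delta_n\,\Bigr\} \;=\; 1-\frac{1}{\omega(G)}.
\]
The feasible region $\Delta_n$ is a polytope given by an explicit rational inequality system of size polynomial in $n$, and the quadratic objective is read off directly from the adjacency structure of $G$; hence this is a bona fide instance of the pure continuous model \eqref{eq:nonlinear-over-polyhedron} with $n_2=0$, in varying dimension $n$ equal to the number of vertices of $G$. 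An exact optimizer would return $1-1/\omega(G)$, from which $\omega(G)$ is recovered immediately, so exact polynomial optimization over polytopes is NP-hard.

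The first step, and the technical heart of the reduction, is to establish the Motzkin--Straus identity. I would prove ``$\ge$'' by exhibiting the feasible point that spreads mass uniformly over a maximum clique (giving objective value $1-1/\omega$), and ``$\le$'' by a local-exchange argument: starting from any maximizer, shifting weight between two \emph{non}-adjacent vertices never decreases the objective, so one may push all the mass onto a clique without loss, capping the value at $1-1/\omega$. This is a known classical result that I would either reprove in a few lines or simply cite.

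For the nonexistence of an {\small FPTAS}, the crucial observation is that the attainable optimal values $1-\tfrac1k$, for $k=1,\dots,n$, lie on a grid whose consecutive gaps are $\tfrac1k-\tfrac1{k+1}=\tfrac1{k(k+1)}>\tfrac1{(n+1)^2}$. Restricting to graphs with at least one edge (on which clique is still NP-hard) we have $\omega\ge 2$ and hence $\mathrm{OPT}\ge\tfrac12$, so the problem is a genuine maximization with optimum bounded away from zero. An {\small FPTAS} run with the polynomially small accuracy $\epsilon=\tfrac{1}{2(n+1)^2}$ would, since $\mathrm{OPT}\le 1$, produce a feasible value within additive error $\epsilon\cdot\mathrm{OPT}<\epsilon$ of $\mathrm{OPT}$, that is, strictly less than half the grid gap; rounding to the nearest grid point $1-\tfrac1k$ then recovers $\omega=k$ exactly. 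Because $1/\epsilon$ is polynomial in $n$, the {\small FPTAS} would run in polynomial time and thereby compute $\omega(G)$ in polynomial time, forcing $\mathrm{P}=\mathrm{NP}$.

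The main obstacle I anticipate is not any single deep step but the bookkeeping that makes the gap argument airtight: one must verify that the separation of the value grid is at least an inverse polynomial in the \emph{encoding size} of the instance (here it is $\ge (n+1)^{-2}$), reconcile the multiplicative guarantee of an {\small FPTAS} with the additive precision needed to separate consecutive clique numbers, and confirm that $\mathrm{OPT}$ stays bounded away from $0$ so the multiplicative notion is meaningful. An alternative route, should a cleaner self-contained argument be preferred, is to encode $0/1$-feasibility of an NP-complete problem directly by optimizing a penalized objective such as $\sum_i x_i(1-x_i)$ over $\{\,\ve x\in[0,1]^n: A\ve x\le \ve b\,\}$; but the Motzkin--Straus reduction is more economical and delivers the {\small FPTAS}-hardness with essentially the same effort.
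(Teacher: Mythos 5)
Your proof is correct, but it takes a genuinely different route from the paper's. The paper disposes of both claims in one line by modeling \textsc{max-cut} as minimization of a quadratic form over the cube $[-1,1]^n$ and invoking H{\aa}stad's inapproximability result for \textsc{max-cut}; since that problem is APX-hard, this actually rules out any PTAS (indeed any approximation factor better than a fixed constant), which is strictly stronger than the stated non-existence of an FPTAS. You instead reduce from \textsc{max-clique} via Motzkin--Straus over the simplex and derive FPTAS-hardness not from any inapproximability of clique but from a quantization argument: the attainable optima $1-\tfrac1k$ form a grid with inverse-polynomial spacing, the optimum is bounded below by $\tfrac12$, and an FPTAS run at accuracy $\epsilon=\tfrac1{2(n+1)^2}$ (legitimate, since $1/\epsilon$ is polynomial) pins down $\omega(G)$ exactly. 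Your bookkeeping checks out --- the approximate value $f_{\mathcal A}$ lies in $[\mathrm{OPT}-\epsilon,\mathrm{OPT}]$ and the nearest grid point below $1-\tfrac1k$ is at distance $\tfrac1{k(k-1)}>2\epsilon$, so rounding recovers $k$ --- and it is worth noting that this grid technique is exactly what makes the argument self-contained, needing only NP-hardness of clique rather than a PCP-based inapproximability theorem. The trade-off is that your conclusion is weaker than what the paper's citation delivers (no FPTAS versus no PTAS), though it fully suffices for the theorem as stated.
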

Indeed the max-cut problem can be modeled as minimizing a quadratic form over
the cube $[-1,1]^n$, and thus inapproximability follows from a result by
H\aa{}stad~\cite{Hastad:inapprox97}.  On the other hand, pure continuous
polynomial optimization problems over
polytopes ($n_2=0$) can be solved in polynomial time when the dimension $n_1$
is fixed.  This follows from a much more general result on the computational
complexity of approximating the solutions to general algebraic formulae over
the real numbers by Renegar~\cite{Renegar:1992:Approximating}.

However, as soon as we add just two integer variables, we get a hard problem
again:
\begin{theorem} \label{th:deg4-dim2-hard}
  The problem of minimizing a degree-$4$ polynomial over the lattice
  points of a convex polygon is NP-hard.
\end{theorem}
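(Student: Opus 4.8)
The plan is to reduce from a known NP-complete number-theoretic problem, namely the solvability of a binary quadratic Diophantine equation. By a result of Manders and Adleman, the following problem is NP-complete: given positive integers $a,b,c$, decide whether there exist nonnegative integers $x,y$ with $ax^2+by=c$. I would take this as the source problem; the task is then to encode such an instance as the minimization of a degree-$4$ polynomial over the integer points of a convex polygon in the plane.

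First I would form the objective
\[
  f(x,y) = (ax^2 + by - c)^2,
\]
which is a polynomial of degree $4$ (its leading term being $a^2 x^4$). At every integer point the quantity $ax^2+by-c$ is an integer, so $f$ takes nonnegative integer values there; consequently $f(x,y)=0$ at an integer point exactly when $ax^2+by=c$, and otherwise $f(x,y)\ge 1$. Thus the minimum of $f$ over any lattice set containing all potential solutions is $0$ if the instance is solvable and at least $1$ if it is not.

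Next I would supply the convex polygon. Since $a,b\ge 1$ and we seek only nonnegative solutions, any solution must satisfy $x^2\le c$ and $y\le c$, hence $0\le x\le c$ and $0\le y\le c$. I would therefore take the axis-parallel rectangle
\[
  P = \{\,(x,y)\in\R^2 : 0\le x\le c,\ 0\le y\le c\,\},
\]
a convex polygon whose four defining inequalities have coefficients of size polynomial in the encoding of the instance. By construction $P\cap\Z^2$ contains every nonnegative solution of $ax^2+by=c$, so $\min_{(x,y)\in P\cap\Z^2} f$ equals $0$ precisely when the Manders--Adleman instance is a yes-instance. An algorithm that minimizes a degree-$4$ polynomial over the lattice points of a convex polygon (or even one that merely decides whether the minimum is $0$) therefore decides this NP-complete problem, and since the reduction is computable in polynomial time, NP-hardness follows.

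A remark on where the care is needed. The reduction itself is short; the substantive ingredient is the NP-completeness of the binary quadratic feasibility problem, which is exactly what lets two integer variables suffice once we are allowed a degree-$4$ objective. The main obstacle to verify carefully is that the chosen polygon is genuinely of polynomial size and provably contains the entire (finite) solution set, so that no yes-instance is lost by truncation; the bounds $0\le x,y\le c$ handle this, but one should confirm that the degenerate cases (such as $a=0$, which reduces the equation to a linear one) are excluded or treated separately in the source problem.
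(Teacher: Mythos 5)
Your reduction is correct and takes essentially the same approach as the paper: the paper likewise reduces from a Manders--Adleman NP-complete binary quadratic problem (it uses the quadratic congruence version, deciding whether a positive integer $x<c$ satisfies $x^2\equiv a \pmod{b}$, which after squaring the equation $x^2-a-by=0$ gives a degree-$4$ objective over a box), and your variant with $ax^2+by=c$ is the identical squaring trick applied to the sibling Manders--Adleman problem. The bounding-box and polynomial-size checks you supply are exactly the details the paper leaves to the cited references.
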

This is based on the NP-completeness of the problem whether there exists a
positive integer $x<c$ with $x^2\equiv a \pmod{b}$; see
\cite{GareyJohnson79,deloera-hemmecke-koeppe-weismantel:intpoly-fixeddim}.\smallbreak

We also get hardness results that are much worse than just NP-hardness.
The negative solution of Hilbert's tenth problem by Matiyasevich
\cite{matiyasevich-1970,matiyasevich-1993}, based on earlier work by Davis,
Putnam, and Robinson, implies that nonlinear integer programming is
\emph{incomputable}, i.e., there cannot exist any general algorithm.
(It is clear that for cases where finite bounds for all variables are known,
an algorithm trivially exists.)
Due to a later strengthening of the negative result by Matiyasevich (published
in \cite{jones-1982}), there also
cannot exist any such general algorithm
for even a small fixed number of integer variables; see
\cite{deloera-hemmecke-koeppe-weismantel:intpoly-fixeddim}.
\begin{theorem}
  \label{th:polyopt-incomputable}
  The problem of minimizing a linear form over polynomial constraints in
  at most 10 integer variables is not computable by a recursive function.
\end{theorem}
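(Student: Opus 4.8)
The plan is to reduce the solvability problem for Diophantine equations in a fixed, small number of unknowns---which is undecidable---to the stated optimization problem, so that a recursive function computing optimal values would decide Diophantine solvability, contradicting the negative answer to Hilbert's tenth problem. The deep input I would draw on is the Matiyasevich--Jones strengthening (published in \cite{jones-1982}) of the MRDP theorem: there is a \emph{universal} Diophantine equation in only $9$ unknowns. Consequently, already for this fixed number of unknowns there is no algorithm that decides, given integer coefficients, whether $P(x_1,\dots,x_9)=0$ admits a solution with $x_1,\dots,x_9\in\N$; equivalently, the predicate $\exists\,\mathbf x\in\N^9\colon P(\mathbf x)=0$ is not a recursive function of $P$.

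The second step is to recast this feasibility question as the minimization of a genuine \emph{linear} form. The idea is to lift the nonlinear quantity $P^2$ into one auxiliary variable and consider
\begin{equation*}
\min\ x_{10}\quad\text{s.t.}\quad x_{10}\ge\bigl(P(x_1,\dots,x_9)\bigr)^2,\quad x_1,\dots,x_9\ge 0,\quad \mathbf x\in\Z^{10}.
\end{equation*}
Here the objective is linear, all constraints are polynomial (the nonnegativities are even linear), and there are exactly $10$ integer variables. Since $(P(\mathbf x))^2$ is a nonnegative integer for every feasible $\mathbf x$, the optimal value equals $0$ precisely when $P=0$ is solvable over $\N$, and is at least $1$ otherwise. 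Thus a recursive function returning optimal values, followed by the test ``is the optimum $0$?'', would decide Diophantine solvability in $9$ unknowns---impossible by the first step. Hence no such recursive function exists, and incomputability holds already with at most $10$ integer variables.

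The essential difficulty, and the source of the precise constant $10$, lies entirely in the variable bookkeeping rather than in the logic of the reduction. One must import the sharp bound of $9$ unknowns for a universal Diophantine equation, which is the genuinely hard ingredient and rests on the full apparatus behind the unsolvability of Hilbert's tenth problem. Granting that, two routine observations close the gap: passing from natural-number unknowns to integer variables is free, being enforced by the linear inequalities $x_i\ge 0$; and replacing the equation $P=0$ by the single inequality $x_{10}\ge P^2$ costs exactly one further variable while preserving the equivalence between solvability and optimal value $0$.
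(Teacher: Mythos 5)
Your proposal is correct and is exactly the argument underlying the paper's statement: the paper offers no proof beyond citing the nine-unknown universal Diophantine equation of Matiyasevich--Jones \cite{jones-1982} and the reference \cite{deloera-hemmecke-koeppe-weismantel:intpoly-fixeddim}, where precisely your reduction appears. Your two bookkeeping points are the right ones --- the nonnegativity of the nine unknowns is enforced by the (polynomial) constraints $x_i\ge 0$ rather than by a four-squares substitution, and the single auxiliary variable $x_{10}\ge P^2$ makes the instance always feasible and bounded so that the undecidability is carried entirely by the computation of the optimal value.
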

Another consequence, as shown by
Jeroslow~\cite{jeroslow-1973:quadratic-ip-uncomputable},
is that even integer quadratic programming is incomputable.
\begin{theorem}
  \label{th:quadopt-incomputable}
  The problem of minimizing a linear form over quadratic constraints
    in integer variables is not computable by a recursive function.
\end{theorem}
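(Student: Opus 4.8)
The plan is to reduce from the incomputability of minimizing a linear form over \emph{polynomial} constraints (Theorem~\ref{th:polyopt-incomputable}), whose incomputability ultimately rests on Matiyasevich's negative solution of Hilbert's tenth problem. The only gap between that result and the present statement is the \emph{degree} of the constraints, so the heart of the argument is a degree-reduction (linearization) that turns arbitrary polynomial constraints into quadratic ones while leaving the objective linear and preserving the set of integer-feasible points together with their objective values.

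First I would take an arbitrary instance $\min\{\,\mathbf{c}^\top\mathbf{x} : p_1(\mathbf{x})\le 0,\dots,p_m(\mathbf{x})\le 0,\ \mathbf{x}\in\Z^n\,\}$ of the polynomial problem and systematically introduce auxiliary integer variables that name the products occurring in the $p_i$. Concretely, for each product $x_i x_j$ (or $y_k x_j$, where $y_k$ is a previously introduced variable) that appears while building up a high-degree monomial, I introduce a fresh integer variable $z$ together with the defining constraint $z = x_i x_j$, written as the two quadratic inequalities $z - x_i x_j \le 0$ and $x_i x_j - z \le 0$. Iterating this over all monomials expresses every monomial of degree $\ge 2$ as a single auxiliary variable, so that after substitution each original constraint $p_i(\mathbf{x})\le 0$ becomes \emph{linear} in the extended variable vector, and all remaining nonlinearity is confined to the quadratic defining equations. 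The objective $\mathbf{c}^\top\mathbf{x}$ is untouched, hence still linear, and linear constraints are degenerate quadratics, so the whole system is genuinely quadratic.

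Next I would verify that this lifting is faithful on the integers. At any integer point the defining equations force each auxiliary variable to equal the corresponding integer product, so the projection onto the original coordinates $\mathbf{x}$ is a bijection between the integer-feasible points of the quadratic system and those of the polynomial system, and it preserves the value $\mathbf{c}^\top\mathbf{x}$. Consequently the two instances share the same optimal value and the same feasibility status. An algorithm computing optima of the quadratic-constraint problem would therefore compute optima of the polynomial-constraint problem, contradicting Theorem~\ref{th:polyopt-incomputable}; this establishes incomputability. (Alternatively one encodes mere feasibility by taking a constant objective, which already suffices since even detecting a feasible integer point for the quadratic system is undecidable.)

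The main obstacle --- really the only point requiring care --- is to make the linearization bookkeeping airtight: one must ensure that the auxiliary equations genuinely pin down the product values at \emph{every} integer feasible point, so that no spurious solutions are created or lost, and that the entire construction is uniformly computable from the input instance so that it constitutes a bona fide many-one reduction. Everything else is routine substitution.
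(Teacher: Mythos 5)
Your proposal is correct and is essentially the classical argument of Jeroslow (the reference the paper cites for this theorem): reduce Hilbert's tenth problem, via Theorem~\ref{th:polyopt-incomputable}, to the quadratic case by introducing integer auxiliary variables $z=uv$ for products so that every high-degree monomial is linearized while all nonlinearity is confined to quadratic defining equations. The paper itself gives no proof for this statement, only the citation, and your reduction --- which is faithful on integer points and uniformly computable --- fills that gap in exactly the intended way.
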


How do we get positive complexity results and practical methods?  One way is to consider subclasses of the
objective functions and constraints.  First of all, for the problem of \emph{concave
  minimization} or \emph{convex maximization} which we
  study in Section~\ref{s:convex-max}, we can make use of the property
that optimal solutions can always be found on the boundary (actually on the
set of vertices) of the convex hull
of the feasible solutions. On the other hand, as in the pure continuous case,
\emph{convex minimization}, which we address in Section~\ref{s:convex-min}),
is much easier to handle, from both theoretical and practical viewpoints, than the general case.
Next, in Section \ref{s:general-polynomial}, we
study the general case of polynomial optimization, as well as practical
approaches for handling the important case of quadratic functions.
Finally, in Section~\ref{s:global}, we briefly describe the practical approach of
global optimization.


For each of these subclasses covered in sections
\ref{s:convex-max}--\ref{s:global}, we discuss positive complexity results,
such as \emph{polynomiality results in fixed dimension}, if
available (Sections \ref{s:convex-max-fixed-dim},
\ref{s:convex-min-fixed-dim}, \ref{s:fptas}),
including some \emph{boundary cases of complexity} in Sections \ref{s:n-fold-convex-max},
\ref{s:n-fold-min}, and \ref{s:univariate-opt}, and discuss \emph{practical
algorithms} (Sections \ref{s:reduction-to-linear}, \ref{s:outer-approx},
\ref{s:sos-programming}, \ref{s:quadratics}, \ref{s:sBB}).

We end the chapter with conclusions (Section~\ref{s:conclusions}), including a
table that summarizes the complexity situation of the problem
(Table~\ref{t:overview-nonlinear-over-nonlinear}).

\section{Convex integer maximization}
\label{s:convex-max}

\subsection{Fixed dimension}
\label{s:convex-max-fixed-dim}

Maximizing a convex function over the integer points in a polytope in fixed
dimension can be done in polynomial time.  To see this, note that the optimal
value is taken on at a vertex of the convex hull of all feasible integer
points. But when the dimension is fixed, there is only a polynomial number of
vertices, as Cook et al.~\cite{cook-hartmann-kannan-mcdiarmid-1992} showed.
\begin{theorem}
  Let $P = \{\, \mathbf x\in\R^n : A\mathbf x\leq\mathbf b\,\}$ be a rational polyhedron
  with $A\in\Q^{m\times n}$ and let $\phi$ be the largest binary encoding size
  of any of the rows of the system~$A\mathbf x\leq\mathbf b$.  Let $P^{\mathrm{I}} =
  \mathop{\mathrm{conv}}(P\cap\Z^n)$ be the integer hull of~$P$.  Then the number of vertices
  of~$P^{\mathrm{I}}$ is at most $2 m^n{(6n^2\phi)}^{n-1}$.
\end{theorem}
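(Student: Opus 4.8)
The plan is to attach to each vertex of $P^{\mathrm{I}}$ a small piece of combinatorial data — a choice of $n$ rows of $A$ — and then to bound, for each such choice, how many vertices can be attached to it. First I would reduce to the case where $A$ is integral (scaling each row changes its encoding size only by a bounded amount that can be absorbed into $\phi$) and where $P$ is pointed and full-dimensional (otherwise one restricts to the affine hull and factors out the lineality space, which only decreases the count). Every vertex $z$ of $P^{\mathrm{I}}$ is the unique maximizer of $c^\top x$ over $P\cap\Z^n$ for some objective $c$; the set of all such $c$ is the full-dimensional normal cone $N(z)$, and as $z$ ranges over the vertices these cones tile the set of objectives whose optimum over $P$ is finite.

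The localization step is the following. The normal fan of the relaxation $P$ subdivides objective space into polyhedral cones, and after triangulation each maximal cone is contained in a simplicial cone $\sigma_S=\mathrm{cone}\{a_i : i\in S\}$ generated by $n$ linearly independent rows indexed by some $S$ with $|S|=n$. Hence to each vertex $z$ of $P^{\mathrm{I}}$ I can assign an index set $S$ for which $N(z)$ meets the interior of $\sigma_S$. Since there are at most $\binom{m}{n}\le m^n$ such sets, it suffices to bound the number of vertices assigned to a fixed $S$ by $2(6n^2\phi)^{n-1}$.

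For the per-cone bound I would pass to coordinates $y=Bx$, where $B$ is the nonsingular matrix whose rows are the $a_i$, $i\in S$. In these coordinates the constraints indexed by $S$ become box constraints $y_j\le\beta_j$, the objectives in $\sigma_S$ become strictly positive combinations of the $y_j$, and the vertices assigned to $S$ map to the Pareto-maximal vertices of the integer hull of $B(P)$ with respect to the image lattice $B\Z^n$ of index $|\det B|$. These vertices lie on a monotone path along the upper boundary of the integer hull: consecutive ones are joined by edges of $P^{\mathrm{I}}$ whose primitive integer direction vectors rotate monotonically as the objective sweeps through $\sigma_S$. The number of breakpoints of this path is governed by a Euclidean-algorithm / continued-fraction phenomenon, in which the successive edge directions behave like best approximations determined by the constraint data, whose entries and relevant subdeterminants have encoding size $O(n^2\phi)$ by Hadamard and Cramer estimates; each of the $n-1$ independent rotational degrees of freedom then contributes at most $6n^2\phi$ breakpoints. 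Multiplying, and allowing the factor $2$ for the endpoints and the two orientations of the sweep, gives the per-cone bound, and combining with the $m^n$ choices of $S$ yields the theorem.

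The main obstacle is precisely this last per-cone estimate. The subtlety is that the number of vertices on a convex lattice path can a priori grow polynomially in the magnitude of the coordinates — hence exponentially in $\phi$, as in the classical Andrews-type bounds — so obtaining a bound that is only polynomial in $\phi$ must exploit that the path is the boundary of the integer hull of a region cut out by inequalities of small encoding size, which forces its convex corners to be controlled logarithmically in the data rather than by the raw diameter. Making the monotonicity of the edge-direction sequence precise, correctly counting the $(n-1)$-dimensional family of breakpoints, and tracking the determinant bounds carefully enough to produce the explicit constant $6n^2$ is where the real work lies.
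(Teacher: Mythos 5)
The paper itself does not prove this theorem --- it is quoted verbatim from Cook, Hartmann, Kannan and McDiarmid \cite{cook-hartmann-kannan-mcdiarmid-1992} --- so your proposal has to be measured against the argument in that reference. Your first stage matches it: charging each vertex of $P^{\mathrm{I}}$ to one of at most $m^n$ simplicial cones spanned by $n$ linearly independent rows of $A$ (obtained by triangulating the maximal cones of the normal fan of $P$ --- note each maximal cone is a \emph{union} of such simplicial cones, not contained in one), and reducing the theorem to a bound of $2(6n^2\phi)^{n-1}$ on the number of vertices charged to a fixed cone $\sigma_S$, is exactly the right skeleton, and the accounting $\binom{m}{n}\le m^n$ is fine.

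The gap is the per-cone bound, which is the entire content of the theorem: it is precisely there that the polynomial dependence on $\phi$ (rather than on the magnitude of the data, as in Andrews-type bounds) has to be produced, and you do not produce it. The mechanism you propose --- a monotone path of Pareto-optimal vertices whose edge directions rotate as the objective sweeps the cone, counted by a Euclidean-algorithm/continued-fraction phenomenon --- is an intrinsically two-dimensional picture. For $n\ge 3$ the Pareto-maximal vertices inside a simplicial cone form an $(n-1)$-dimensional boundary complex, not a path; there is no single rotational parameter per ``degree of freedom,'' and no continued-fraction expansion attached to the data that would count its cells. The argument that actually works is different in kind: for each vertex $v$ charged to $\sigma_S$ one records the gaps $\delta_i(v)=\max_{x\in P}a_i^\top x-a_i^\top v$ for $i\in S$; Cramer-rule estimates confine these to a range of length $2^{O(n^2\phi)}$, which is partitioned into $6n^2\phi$ intervals of geometrically doubling length, and one then shows that two distinct charged vertices cannot have all of their first $n-1$ gaps falling in the same intervals --- if they did, an explicit integer combination of the two would yield a point of $P\cap\Z^n$ contradicting the vertex property of one of them. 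It is this injection of vertices into a grid of $(6n^2\phi)^{n-1}$ logarithmic cells, not a rotation-monotonicity argument, that converts ``small encoding size'' into ``few vertices.'' Since you explicitly defer exactly this step as ``where the real work lies,'' the proposal as written does not constitute a proof.
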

Moreover, Hartmann \cite{hartmann-1989-thesis} gave an algorithm for
enumerating all the vertices, which runs in polynomial time in fixed
dimension.

By using Hartmann's algorithm, we can therefore compute all the vertices of
the integer hull~$P^{\mathrm{I}}$, evaluate the convex objective function on
each of them and pick the best.  This simple method already provides a
polynomial-time algorithm.

\subsection{Boundary cases of complexity}
\label{s:n-fold-convex-max}

In the past fifteen years algebraic geometry and commutative algebra
tools have shown their exciting potential to study problems in integer
optimization (see \cite{BertsimasWeismantel05,Thomas:01} and
references therein). The presentation in this section is based on the papers
\cite{DeLoera+Hemmecke+Onn+Weismantel:08,Onn+Rothblum:04}.

The first key lemma, extending results of \cite{Onn+Rothblum:04} for
combinatorial optimization, shows that when a suitable geometric
condition holds, it is possible to efficiently reduce the convex integer maximization
problem to the solution of polynomially many linear integer
programming counterparts.  As we will see, this condition holds
naturally for a broad class of problems in variable dimension. To
state this result, we need the following terminology. A
\emph{direction} of an edge (i.e., a one-dimensional face) $e$ of a polyhedron $P$ is any
nonzero scalar multiple of $\mathbf u-\mathbf v$ with $\mathbf u,\mathbf v$ any two distinct points in
$e$.  A set of vectors \emph{covers all edge-directions of $P$} if it
contains a direction of each edge of $P$.  A \emph{linear integer
programming oracle} for matrix $A\in\Z^{m\times n}$ and vector
$\mathbf b\in\Z^m$ is one that, queried on $\mathbf w\in\Z^n$, solves the linear
integer program $\max\{\mathbf w^\top\mathbf x: A\mathbf x=\mathbf b,\ \mathbf x\in{\mathbb N}^n\}$, that is, either returns
an optimal solution $\mathbf x\in{\mathbb N}^n$, or asserts that the program is
infeasible, or asserts that the objective function $\mathbf w^\top\mathbf x$ is unbounded.

\begin{lemma}\label{EdgeDirections}
For any fixed $d$ there is a strongly polynomial oracle-time algorithm
that, given any vectors $\mathbf w_1,\dots,\mathbf w_d\in\Z^n$, matrix
$A\in\Z^{m\times n}$ and vector $\mathbf b\in\Z^m$ endowed with a linear
integer programming oracle, finite set $E\subset\Z^n$ covering all
edge-directions of the polyhedron $\mathop{\mathrm{conv}}\{\mathbf x\in{\mathbb N}^n\,:\, A\mathbf x=\mathbf b\}$, and
convex functional $c:\R^d\longrightarrow\R$ presented by a comparison
oracle, solves the convex integer program $$\max\, \{c(\mathbf w_1^\top \mathbf x,\dots,\mathbf w_d^\top
\mathbf x):\ A\mathbf x=\mathbf b,\ \mathbf x\in{\mathbb N}^n\}\ .$$
\end{lemma}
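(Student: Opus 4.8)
The plan is to exploit the convexity of $c$ to reduce the problem to enumerating the vertices of a low-dimensional projected polytope, and then to enumerate those vertices by sweeping a hyperplane arrangement built from the projected edge-directions. Write $W\colon\R^n\to\R^d$ for the linear map $\mathbf x\mapsto(\mathbf w_1^\top\mathbf x,\dots,\mathbf w_d^\top\mathbf x)$, set $P=\conv\{\mathbf x\in\N^n:A\mathbf x=\mathbf b\}$, and $Q=W(P)\subseteq\R^d$. The objective $c(\mathbf w_1^\top\mathbf x,\dots,\mathbf w_d^\top\mathbf x)$ is $c\circ W$, so maximizing it over $P$ is the same as maximizing $c$ over $Q$. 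Since $c$ is convex and (treating the bounded case first, with unboundedness reported by the oracle) $Q$ is a polytope, the maximum of $c$ over $Q$ is attained at a vertex of $Q$; moreover every vertex of $Q$ is the image $W\mathbf x$ of some vertex $\mathbf x$ of $P$. Hence it suffices to produce a list of points containing all vertices of $Q$, evaluate $c$ on each through the comparison oracle, and return the best. The whole task reduces to generating the vertices of $Q$ using only polynomially many calls to the integer programming oracle.

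The geometric key, which I expect to be the main obstacle, is the claim that the finite set $W(E)=\{W\mathbf e:\mathbf e\in E\}\subset\Z^d$ covers all edge-directions of $Q$. I would prove this by lifting: an edge $f$ of $Q$ is the image of a face $F=W^{-1}(\operatorname{aff}f)\cap P$ of $P$ whose image is the one-dimensional set $f$. Every edge of $F$ maps into the line through $f$, so its image is either a point or a segment parallel to $f$; following a path of edges of $F$ between two vertices of $F$ whose $W$-images are the two endpoints of $f$, at least one edge must have nonzero image and hence project parallel to $f$. That edge is an edge of $P$, so its direction is a scalar multiple of some $\mathbf e\in E$, and $W\mathbf e$ is then parallel to $f$, as claimed.

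Granting this, I would build the central hyperplane arrangement $\mathcal A$ in $\R^d$ consisting of the hyperplanes $H_{\mathbf e}=\{\mathbf c\in\R^d:\langle\mathbf c,W\mathbf e\rangle=0\}$ for $\mathbf e\in E$ (equivalently, the normal fan of the zonotope $\sum_{\mathbf e\in E}[-W\mathbf e,\,W\mathbf e]$). Because the walls of the normal fan of $Q$ occur exactly where the linear objective becomes orthogonal to an edge-direction of $Q$, and all such directions lie in $W(E)$, the normal fan of $Q$ is refined by $\mathcal A$: each full-dimensional chamber of $\mathcal A$ lies inside the normal cone of a single vertex of $Q$. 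So for one interior vector $\mathbf c$ per chamber I set $\mathbf w:=W^\top\mathbf c\in\Z^n$ (after clearing denominators) and call the integer programming oracle on $\max\{\mathbf w^\top\mathbf x:A\mathbf x=\mathbf b,\ \mathbf x\in\N^n\}$; its optimal solution $\mathbf x$ yields the vertex $W\mathbf x$ of $Q$ that maximizes $\mathbf c$. Ranging over all chambers recovers every vertex of $Q$.

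It remains to check that this is strongly polynomial in oracle-time for fixed $d$. A central arrangement of $N=|E|$ hyperplanes in $\R^d$ has $O(N^{d-1})$ chambers, all of which can be enumerated, together with an interior representative $\mathbf c$ of polynomially bounded encoding size, by a standard incremental sweep using only strongly polynomial arithmetic. Thus the algorithm issues $O(|E|^{d-1})$ integer programming oracle calls and at most that many comparison oracle calls for $c$, with a number of arithmetic operations bounded polynomially in $n$ and $|E|$ and independent of the binary encoding lengths of $A$, $\mathbf b$, and the $\mathbf w_i$. Returning the candidate $W\mathbf x$ on which $c$ is largest then solves the convex integer program, and an oracle call reporting unboundedness signals the unbounded case.
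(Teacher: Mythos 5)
Your proposal is correct and follows essentially the same route as the paper: project $P$ onto $Q=W(P)\subseteq\R^d$, observe that a convex objective is maximized at a vertex of $Q$, show that $W(E)$ covers the edge-directions of $Q$, and recover each vertex of $Q$ by one linear integer programming oracle call per chamber of the central arrangement normal to $W(E)$ (equivalently, per vertex of the zonotope the paper alludes to). The paper itself only sketches this argument and defers the zonotope construction to \cite{DeLoera+Hemmecke+Onn+Weismantel:08,Onn+Rothblum:04}; your write-up supplies the same details, modulo minor bookkeeping such as discarding those $\mathbf e\in E$ with $W\mathbf e=\mathbf 0$.
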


Here, \emph{solving} the program means that the algorithm
either returns an optimal solution $\mathbf x\in{\mathbb N}^n$, or asserts
the problem is infeasible, or asserts the polyhedron
$\{\mathbf x\in\R^n_+\,:\, A\mathbf x=\mathbf b\}$ is unbounded; and \emph{strongly polynomial
oracle-time} means that the number of arithmetic operations and calls to
the oracles are polynomially bounded in $m$ and $n$, and the size of the
numbers occurring throughout the algorithm is polynomially bounded in
the size of the input (which is the number of bits in the binary
representation of the entries of $\mathbf w_1,\dots,\mathbf w_d,A,\mathbf b,E$).

Let us outline the main ideas behind the proof to Lemma
\ref{EdgeDirections}, and let us point out the difficulties that one
has to overcome. Given data for a convex integer maximization problem
$\max\{c(\mathbf w_1^\top \mathbf x,\dots,\mathbf w_d^\top \mathbf x):\ A\mathbf x=\mathbf b,\ \mathbf x\in{\mathbb N}^n\}$, consider the
polyhedron $P:=\mathop{\mathrm{conv}}\{\mathbf x\in{\mathbb N}^n:A\mathbf x=\mathbf b\}\subseteq\R^n$ and its linear
transformation
$Q:=\{(\mathbf w_1^\top \mathbf x,\dots,\mathbf w_d^\top \mathbf x):\ \mathbf x\in P\}\subseteq\R^d$. Note that $P$ has typically exponentially many vertices and is not accessible
computationally. Note also that, because $c$ is convex, there is an
optimal solution $\mathbf x$ whose image $(\mathbf w_1^\top \mathbf x,\dots,\mathbf w_d^\top \mathbf x)$ is a vertex
of $Q$. So an important ingredient in the solution is to construct the
vertices of $Q$. Unfortunately, $Q$ may also have exponentially many
vertices even though it lives in a space $\R^d$ of fixed
dimension. However, it can be shown that, when the number of
{\em edge-directions} of $P$ is polynomial, the number of vertices of
$Q$ is polynomial. Nonetheless, even in this case, it is not possible
to construct these vertices directly, because the number of vertices of
$P$ may still be exponential. This difficulty can finally be overcome
by using a suitable {\em zonotope}. See
\cite{DeLoera+Hemmecke+Onn+Weismantel:08,Onn+Rothblum:04} for more
details.

Let us now apply Lemma \ref{EdgeDirections} to a broad
(in fact, \emph{universal}) class of convex integer
maximization problems. Lemma \ref{EdgeDirections} implies that these problems can be solved in polynomial time.
Given an $(r+s)\times t$ matrix $A$,
let $A_1$ be its $r\times t$ sub-matrix consisting of the first $r$
rows and let $A_2$ be its $s\times t$ sub-matrix consisting of the
last $s$ rows. Define the \emph{n-fold matrix} of $A$ to be the
following $(r+ns)\times nt$ matrix,
$$A^{(n)}:= ({\bf 1}_n\otimes A_1)\oplus(I_n \otimes A_2)=
\left(
\begin{array}{ccccc}
  A_1    & A_1    & A_1    & \cdots & A_1    \\
  A_2  & 0      & 0      & \cdots & 0      \\
  0  & A_2      & 0      & \cdots & 0      \\
  \vdots & \vdots & \ddots & \vdots & \vdots \\
  0  & 0      & 0      & \cdots & A_2      \\
\end{array}
\right).
$$
Note that $A^{(n)}$ depends on $r$ and $s$: these will be indicated by
referring to $A$ as an ``$(r+s)\times t$ matrix.''

We establish the following theorem, which asserts that
convex integer maximization over $n$-fold systems of a fixed matrix $A$,
in variable dimension $nt$, are solvable in polynomial time.

\begin{theorem}\label{Theorem: Main n-fold}
For any fixed positive integer $d$ and fixed $(r+s)\times t$ integer
matrix $A$ there is a polynomial oracle-time algorithm that,
given $n$, vectors $\mathbf w_1,\dots,\mathbf w_d\in\Z^{nt}$ and $\mathbf b\in\Z^{r+ns}$,
and convex function $c:\R^d\longrightarrow\R$ presented by a
comparison oracle, solves the convex n-fold integer maximization problem
$$\max\, \{c(\mathbf w_1^\top \mathbf x,\dots,\mathbf w_d^\top \mathbf x):\ A^{(n)}\mathbf x=\mathbf b,\ \mathbf x\in{\mathbb N}^{nt}\}\ .$$
\end{theorem}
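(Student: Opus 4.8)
The plan is to apply Lemma~\ref{EdgeDirections} to the $n$-fold system $A^{(n)}\mathbf x=\mathbf b$. Looking at the lemma's hypotheses, for fixed $d$ it requires three things that must be supplied in polynomial time: a linear integer programming oracle for the matrix in question, a finite set $E$ covering all edge-directions of the relevant polyhedron, and the convex functional (which is given). So the theorem will follow once I verify that both the oracle and the edge-direction set can be produced in polynomial time, uniformly in $n$, for the $n$-fold matrix $A^{(n)}$.

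Let me think about what's needed. The hard technical input will be the edge-direction set $E$. Here I would invoke the theory of $n$-fold integer programming: for a fixed base matrix $A$, the \emph{Graver basis} of $A^{(n)}$ — the set of $\sqsubseteq$-minimal elements of $\ker(A^{(n)})\cap\Z^{nt}$ — can be computed in time polynomial in $n$. This is the central structural fact about $n$-fold systems and is exactly the ingredient provided by the papers \cite{DeLoera+Hemmecke+Onn+Weismantel:08}. The key point I would stress is that although the dimension $nt$ grows, the Graver basis elements have a bounded structure (bounded ``type''), so their number and the cost of listing them stay polynomial in $n$. Since every edge-direction of $\conv\{\mathbf x\in\N^{nt}:A^{(n)}\mathbf x=\mathbf b\}$ is a Graver basis element of $A^{(n)}$ (edges of the integer hull lie in primitive kernel directions that are Graver-minimal), the Graver basis furnishes a valid set $E$ of polynomial cardinality covering all edge-directions. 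This is the step I expect to be the main obstacle, and it is where all the real work lives; I would cite the $n$-fold Graver-complexity results rather than reprove them.

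Next I would supply the linear integer programming oracle. The second hypothesis of Lemma~\ref{EdgeDirections} demands an oracle solving $\max\{\mathbf w^\top\mathbf x:A^{(n)}\mathbf x=\mathbf b,\ \mathbf x\in\N^{nt}\}$. Again using the $n$-fold structure, linear integer programs over $A^{(n)}$ are solvable in polynomial time in $n$ by the augmentation (Graver-best step) algorithm, which repeatedly improves a feasible point along Graver basis directions and terminates in polynomially many steps. Thus the required oracle is realized by a genuine polynomial-time algorithm, so the ``oracle-time'' bound of Lemma~\ref{EdgeDirections} translates into an honest polynomial running time, and there is no residual oracle left in the final complexity statement.

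Finally I would assemble the pieces. Given $n$, the vectors $\mathbf w_1,\dots,\mathbf w_d$, the right-hand side $\mathbf b$, and the comparison oracle for $c$, I first compute the Graver basis of $A^{(n)}$ and take $E$ to be (the vectors of) that basis; this is polynomial in $n$ since $A$ and $d$ are fixed. I then hand $\mathbf w_1,\dots,\mathbf w_d$, the matrix $A^{(n)}$, $\mathbf b$, the set $E$, and the functional $c$ to the algorithm of Lemma~\ref{EdgeDirections}, using the polynomial-time augmentation procedure as the linear integer programming oracle. By the lemma this solves the convex integer program $\max\{c(\mathbf w_1^\top\mathbf x,\dots,\mathbf w_d^\top\mathbf x):A^{(n)}\mathbf x=\mathbf b,\ \mathbf x\in\N^{nt}\}$ in strongly polynomial oracle-time; substituting the concrete polynomial-time oracle yields an overall polynomial oracle-time algorithm (the only remaining oracle being the comparison oracle for $c$), which is exactly the assertion of the theorem. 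The one subtlety worth flagging is bookkeeping on encoding sizes — one must confirm that the Graver basis elements and all intermediate numbers stay polynomially bounded in the input size — but this too is part of the established $n$-fold machinery.
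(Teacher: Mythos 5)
Your proposal is correct and follows essentially the same route as the paper: realize the linear integer programming oracle via the polynomial-time Graver-augmentation algorithm for $n$-fold systems (Theorem~\ref{Theorem: n-fold convex minimization}), compute $E:=\mathcal{G}(A^{(n)})$ in polynomial time via the stabilization results (Proposition~\ref{GraverComputation}), note that the Graver basis covers all edge-directions (Lemma~\ref{GraverEdgeDirections}), and feed everything into Lemma~\ref{EdgeDirections}. The only cosmetic difference is that you paraphrase rather than cite Lemma~\ref{GraverEdgeDirections} for the edge-direction claim.
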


The equations defined by an n-fold matrix have the following,
perhaps more illuminating, interpretation: splitting the variable vector
and the right-hand side vector into components of suitable sizes,
$\mathbf x=(\mathbf x^1,\dots,\mathbf x^n)$ and $\mathbf b=(\mathbf b^0,\mathbf b^1,\dots,\mathbf b^n)$, where $\mathbf b^0\in\Z^r$
and $\mathbf x^k\in{\mathbb N}^t$ and $\mathbf b^k\in \Z^s$ for $k=1,\dots,n$, the equations
become $A_1(\sum_{k=1}^n \mathbf x^k)=\mathbf b^0$ and $A_2\mathbf x^k=\mathbf b^k$ for $k=1,\dots,n$.
Thus, each component $\mathbf x^k$ satisfies a system of constraints defined
by $A_2$ with its own right-hand side $\mathbf b^k$, and the sum
$\sum_{k=1}^n \mathbf x^k$ obeys constraints determined by $A_1$ and $\mathbf b^0$
restricting the ``common resources shared by all components''.

Theorem \ref{Theorem: Main n-fold} has various applications, including
multiway transportation problems, packing problems, vector
partitioning and clustering. For example, we have the following
corollary providing the first polynomial time solution of convex
$3$-way transportation.

\begin{corollary}\label{Threeway}{\bf (convex 3-way transportation)}
For any fixed $d,p,q$ there is a polynomial oracle-time algorithm that,
given $n$, arrays $\mathbf w_1,\dots,\mathbf w_d\in\Z^{p\times q\times n}$,
$\mathbf u\in\Z^{p\times q}$, $\mathbf v\in\Z^{p\times n}$, $\mathbf z\in\Z^{q\times n}$,
and convex $c:\R^d\longrightarrow\R$ presented by comparison oracle,
solves the convex integer 3-way transportation problem
$$\max\{\,c(\mathbf w_1^\top \mathbf x, \dots, \mathbf w_d^\top \mathbf x)
 \ :\ \mathbf x\in{\mathbb N}^{p\times q\times n}\,,\ \sum_i x_{i,j,k}=z_{j,k}
\,,\ \sum_j x_{i,j,k}=v_{i,k}\,,\ \sum_k x_{i,j,k}=u_{i,j}\,\}\ .$$
\end{corollary}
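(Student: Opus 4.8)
The plan is to derive Corollary~\ref{Threeway} from Theorem~\ref{Theorem: Main n-fold} by exhibiting the 3-way transportation constraints as an $n$-fold system of a suitable \emph{fixed} matrix $A$. The key observation is that the dimension $n$ (the third index of the arrays) is precisely the variable-dimension parameter of the $n$-fold construction, while $p$ and $q$ are fixed, so the ``base'' matrix $A$ must encode exactly those constraints whose structure repeats once per slice $k$, together with the coupling constraints that tie the slices together.

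First I would fix the ordering of the variables by grouping them into slices: write $\mathbf x=(\mathbf x^1,\dots,\mathbf x^n)$ where $\mathbf x^k=(x_{i,j,k})_{i,j}\in\mathbb N^{pq}$ is the $k$-th horizontal plane of the array, so that $t=pq$. Now I classify the three families of constraints according to the interpretation given just after Theorem~\ref{Theorem: Main n-fold}, namely that an $n$-fold system reads $A_1(\sum_{k=1}^n\mathbf x^k)=\mathbf b^0$ together with $A_2\mathbf x^k=\mathbf b^k$ for each $k$. The line-sum constraints $\sum_i x_{i,j,k}=z_{j,k}$ and $\sum_j x_{i,j,k}=v_{i,k}$ involve only variables within a single slice $k$, so they go into the per-slice block $A_2\mathbf x^k=\mathbf b^k$; here $A_2$ is the $(p+q)\times pq$ incidence matrix of the ordinary $p\times q$ transportation polytope, and the right-hand sides are $\mathbf b^k=(\mathbf v_{\cdot,k},\mathbf z_{\cdot,k})$. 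The remaining constraints $\sum_k x_{i,j,k}=u_{i,j}$ couple all slices through the sum $\sum_k\mathbf x^k$, so they form the ``common resources'' block $A_1(\sum_k\mathbf x^k)=\mathbf b^0$; here $A_1$ is simply the $pq\times pq$ identity $I_{pq}$ and $\mathbf b^0=\mathbf u$. Thus $r=pq$, $s=p+q$, and $A$ is the fixed $(pq+p+q)\times pq$ matrix with $A_1=I_{pq}$ stacked over the transportation incidence matrix $A_2$ --- critically, $A$ does \emph{not} depend on $n$.

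With this identification the feasible region $\{\mathbf x\in\mathbb N^{pq\cdot n}:A^{(n)}\mathbf x=\mathbf b\}$ coincides exactly with the set of nonnegative integer $3$-way tables satisfying the three prescribed families of line sums, and the convex functional $c(\mathbf w_1^\top\mathbf x,\dots,\mathbf w_d^\top\mathbf x)$ is unchanged under merely reindexing the entries. I would then invoke Theorem~\ref{Theorem: Main n-fold} directly: since $d$ is fixed and $A$ is a fixed matrix, its hypotheses are met, and it supplies a polynomial oracle-time algorithm for $\max\{c(\mathbf w_1^\top\mathbf x,\dots,\mathbf w_d^\top\mathbf x):A^{(n)}\mathbf x=\mathbf b,\ \mathbf x\in\mathbb N^{nt}\}$, which is precisely the claimed problem. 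Note that the feasibility/unboundedness handling carries over verbatim from the ``solving'' convention of the theorem.

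The only genuine point requiring care --- the part I expect to be the main obstacle --- is verifying that the matrix $A$ is legitimately \emph{fixed} and that the encoding matches the $n$-fold template exactly, rather than being a mere reindexing that secretly lets the base matrix grow with $n$. Concretely, one must check that the three index-contraction operators ($\sum_i$, $\sum_j$, $\sum_k$) split cleanly into the two blocks, with the $k$-contraction becoming the coupling block and the $i,j$-contractions staying within slices; any sloppiness here would wrongly place a constraint in the $A_2$ block that in fact couples slices, or vice versa. Once the block assignment is confirmed and the dimensions $r=pq$, $s=p+q$, $t=pq$ are tallied, the corollary follows immediately, with no further estimates needed.
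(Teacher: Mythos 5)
Your proof is correct and is exactly the argument the paper intends: the corollary is stated as a direct application of Theorem~\ref{Theorem: Main n-fold}, using the slice interpretation given just before it, with $t=pq$, $A_1=I_{pq}$ capturing the $\sum_k$ constraints and $A_2$ the $(p+q)\times pq$ two-way transportation incidence matrix capturing the within-slice $\sum_i$ and $\sum_j$ constraints. Your identification of $r=pq$, $s=p+q$, and the fact that $A$ is fixed because $p,q$ are fixed, matches the paper's construction.
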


Note that in contrast, if the dimensions of two sides of the tables
are variable, say, $q$ and $n$, then already the standard \emph{linear}
integer $3$-way transportation problem over such tables is NP-hard,
see \cite{DeLoera+Onn:2004,DeLoera+Onn:2006a,DeLoera+Onn:2006b}.

In order to prove Theorem \ref{Theorem: Main n-fold}, we need to
recall some definitions. The \emph{Graver basis} of an integer matrix
$A\in\Z^{m\times n}$, introduced in \cite{Graver:75}, is a canonical
finite set ${\cal G}(A)$ that can be defined as follows. For
each of the $2^n$ orthants $\mathcal{O}_j$ of $\R^n$ let $H_j$ denote the
inclusion-minimal Hilbert basis of the pointed rational polyhedral cone
$\ker(A)\cap\mathcal{O}_j$. Then the Graver basis $\mathcal{G}(A)$ is defined
to be the union $\mathcal{G}(A)=\cup_{i=1}^{2^n} H_j\setminus\{\mathbf0\}$ over
all these $2^n$ Hilbert bases. By this definition, the Graver basis
$\mathcal{G}(A)$ has a nice representation property: every
$\mathbf z\in\ker(A)\cap\Z^n$ can be written as a sign-compatible nonnegative
integer linear combination $\mathbf z=\sum_i \alpha_i \mathbf g_i$ of Graver basis
elements $\mathbf g_i\in\mathcal{G}(A)$. This follows from the simple observation
that $\mathbf z$ has to belong to some orthant $\mathcal{O}_j$ of $\R^n$ and thus
it can be represented as a sign-compatible nonnegative integer linear
combination of elements in $H_j\subseteq\mathcal{G}(A)$. For more details
on Graver bases and the currently fastest procedure for computing them
see \cite{Sturmfels96,Hemmecke:2003b,4ti2}.

Graver bases have another nice property: They contain all edge
directions in the integer hulls within the polytopes $P_{\mathbf b}=\{\,\mathbf x: A\mathbf x=\mathbf b,\, \mathbf x
\geq \mathbf 0\,\}$ as $\mathbf b$ is varying. We include a direct proof here.

\begin{lemma}\label{GraverEdgeDirections} For
every integer matrix $A\in\Z^{m\times n}$ and every integer vector
$\mathbf b\in{\mathbb N}^m$, the Graver basis $\mathcal{G}(A)$ of $A$ covers all edge-directions
of the polyhedron $\mathop{\mathrm{conv}}\{\,\mathbf x\in{\mathbb N}^n : A\mathbf x=\mathbf b\,\}$ defined by $A$ and $\mathbf b$.
\end{lemma}
\begin{proof} Consider any edge $e$ of $P:=\mathop{\mathrm{conv}}\{\,\mathbf x\in{\mathbb N}^n:A\mathbf x=\mathbf b\,\}$ and
pick two distinct points $\mathbf u,\mathbf v\in e\cap{\mathbb N}^n$. Then $\mathbf g:=\mathbf u-\mathbf v$ is in
$\ker(A)\cap\Z^n\setminus\{\mathbf0\}$ and hence, by the representation property of the
Graver basis $\mathcal{G}(A)$, $\mathbf g$ can be written as a finite
\emph{sign-compatible} sum $\mathbf g=\sum \mathbf g^i$ with $\mathbf g^i\in\mathcal{G}(A)$ for
all $i$. Now, we claim that $\mathbf u-\mathbf g^i\in P$ for all $i$. To see this,
note first that $\mathbf g^i\in\mathcal{G}(A)\subseteq\ker(A)$ implies $A\mathbf g^i=\mathbf0$ and
hence $A(\mathbf u-\mathbf g^i)=A\mathbf u=\mathbf b$; and second, note that $\mathbf u-\mathbf g^i\geq \mathbf0$: indeed, if
$g^i_j\leq 0$ then $u_j-g^i_j\geq u_j\geq 0$; and if $g^i_j>0$ then
sign-compatibility implies $g^i_j\leq g_j$ and therefore $u_j-g^i_j\geq
u_j-g_j=v_j\geq 0$.

Now let $\mathbf w\in\R^n$ be a linear functional uniquely maximized over $P$
at the edge $e$. Then for all $i$, as just proved, $\mathbf u-\mathbf g^i\in P$
and hence $\mathbf w^\top \mathbf g^i\geq 0$. But $\sum \mathbf w^\top \mathbf g^i= \mathbf w^\top \mathbf g=\mathbf w^\top \mathbf u-\mathbf w^\top \mathbf v=0$,
implying that in fact, for all $i$, we have $\mathbf w^\top \mathbf g^i=0$ and therefore
$\mathbf u-\mathbf g^i\in e$. This implies that each $\mathbf g^i$ is a direction of the edge
$e$ (in fact, moreover, all $\mathbf g^i$ are the same, so $\mathbf g$ is a
multiple of some Graver basis element).
\end{proof}

In Section \ref{s:n-fold-min}, we show that for fixed matrix $A$, the size of the Graver basis of $A^{(n)}$ increases only polynomially in $n$ implying Theorem \ref{Theorem: n-fold convex minimization} that states that certain integer convex $n$-fold \emph{minimization} problems can be solved in polynomial time when the matrix $A$ is kept fixed. As a special case, this implies that the integer \emph{linear} $n$-fold \emph{maximization} problem can be solved in polynomial time when the matrix $A$ is kept fixed. Finally, combining these results with Lemmas \ref{EdgeDirections} and \ref{GraverEdgeDirections}, we can now prove Theorem \ref{Theorem: Main n-fold}.

\begin{proof}[of Theorem \ref{Theorem: Main n-fold}]
The algorithm underlying Proposition \ref{Theorem: n-fold convex minimization} provides a polynomial time
realization of a linear integer programming oracle for $A^{(n)}$ and $\mathbf b$.
The algorithm underlying Proposition \ref{GraverComputation} allows
to compute the Graver basis $\mathcal{G}(A^{(n)})$ in time which is polynomial
in the input. By Lemma \ref{GraverEdgeDirections},
this set $E:=\mathcal{G}(A^{(n)})$ covers all edge-directions of the polyhedron
$\mathop{\mathrm{conv}}\{\,\mathbf x\in{\mathbb N}^{nt} : A^{(n)}\mathbf x=\mathbf b\,\}$ underlying the convex integer program.
Thus, the hypothesis of Lemma \ref{EdgeDirections} is satisfied and hence
the algorithm underlying Lemma \ref{EdgeDirections} can be used to
solve the convex integer maximization problem in polynomial time.
\end{proof}

\subsection{Reduction to linear integer programming}
\label{s:reduction-to-linear}

In this section it is our goal to develop a basic understanding about
when a discrete polynomial programming problem can be tackled with
classical linear integer optimization techniques. We begin to study
polyhedra related to polynomial integer programming. The presented
approach applies to problems of the kind
\[
  \max \{f(\mathbf x) : A\mathbf x= \mathbf b,\; \mathbf x\in\Z_+^n\}
\]
with convex polynomial function $f$, as well as to models such as
\begin{equation}
  \label{IPP}
  \max \{ \mathbf c^\top \mathbf x :  \mathbf x \in K_I \},
\end{equation}
where $K_I$ denotes the set of all integer points of a compact
\emph{basic-closed} semi-algebraic set $K$ described by a finite
number of polynomial inequalities, i.~e.,
\begin{equation*}
  \label{semi-algebraic_set}
  K = \{ \mathbf x \in \R^n : p_i(\mathbf x) \leq 0,\; i\in M,\;\; \mathbf l \leq \mathbf x \leq \mathbf u \}.
\end{equation*}
We assume that $p_i \in \Z[\mathbf x]:=\Z[x_1,\ldots,x_n]$, for all $i \in M = \{1,\ldots,m\}$, and $\mathbf l,\mathbf u \in \Z^n$.

One natural idea is to derive a linear description of the convex
hull of $K_I$. Unfortunately, $\mathop{\mathrm{conv}}(K_I)$ might contain interior
integer points that are not elements of $K$, see Figure~\ref{fig:x1x2leq1}.
\begin{figure}[th]
  \begin{center}
    \ifpdf
    \input{fig-x1x2leq1.pdf_t}
    \else
    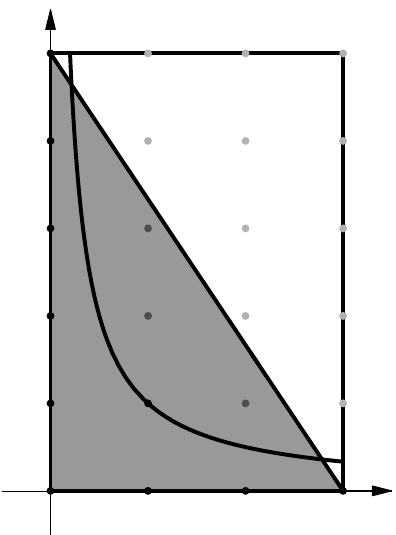
    \fi
    \caption{\small
      Let
      $K = \{ \mathbf x \in \R^2\; |\;
      x_1 x_2 - 1 \leq 0,\;
      0 \leq x_1 \leq 3,\;
      0 \leq x_2 \leq 5\}$.
      The point $(1,2)$ is contained in $\mathop{\mathrm{conv}}(K_I)$. But it violates
      the constraint $x_1x_2-1 \leq 0$.
    }
    \label{fig:x1x2leq1}
  \end{center}
\end{figure}
On the other hand, if a description of $\mathop{\mathrm{conv}}(K_I)$ is at hand, then the
mathematical optimization problem of solving (\ref{IPP}) can be reduced to
optimizing the linear objective function $\mathbf c^\top \mathbf x$ over
the polyhedron $\mathop{\mathrm{conv}}(K_I)$. This is our first topic of interest. In what follows we denote for a set $D$ the projection of $D$ to a set of variables $x$ by the symbol $D_x$.

\begin{definition}
For polynomials $p_1,\ldots,p_m:\; \Z^n \to \Z$ we define the
polyhedron associated with the vector $\mathbf p(\mathbf x)=(p_1(\mathbf x),\ldots,p_m(\mathbf x))$ of
polynomials as
\[
  P_{\mathbf p} = \mathop{\mathrm{conv}} \Big( \big\{ \big(\mathbf x, \mathbf p(\mathbf x) \big)\in\R^{n+m}\;
		\big|\; \mathbf x \in [\mathbf l,\mathbf u] \cap \Z^n \big\}\Big).
\]
\end{definition}

The significance of the results below is that they allow us to
reformulate the nonlinear integer optimization problem
$\max\{f(\mathbf x): A\mathbf x = \mathbf b,\; \mathbf x \in \Z_+^n\}$ as the  optimization problem
$\min\{\pi: A\mathbf x = \mathbf b,\; f(\mathbf x) \leq \pi,\; \mathbf x \in \Z_+^n\}$.
This in turn has the same objective function value as the
linear integer program:
$\min\{\pi: A\mathbf x = \mathbf b,\; (\mathbf x,\pi) \in P_f,\; \mathbf x \in \Z_+^n\}$.
In this situation, an H-description or V-description of the polyhedron $P_f$
is sufficient to reduce
the original nonlinear optimization problem to a linear integer
problem.

\begin{proposition}
  \label{prop:linrelaxofKI}
   For a vector of polynomials $\mathbf p \in \Z[\mathbf x]^m$, let
  \[
    K_I = \{ \mathbf x \in \Z^n : \mathbf p(\mathbf x) \leq \mathbf 0,\;
    \mathbf l \leq \mathbf x \leq \mathbf u \}.
  \]
  Then,
  \begin{equation}
    \label{eq:linrelaxofKI}
    \mathop{\mathrm{conv}}(K_I) \subseteq
    \big( P_{\mathbf p}  \cap \{ (\mathbf x, {\bm{\pi}}) \in \R^{n+m}\; \big|\; {\bm{\pi}} \leq \mathbf0\}\big)_{\mathbf x}.
  \end{equation}
\end{proposition}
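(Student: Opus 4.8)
The plan is to prove the inclusion first at the level of the integer points of $K_I$ and then promote it to the convex hull by a minimality argument. The first thing I would record is that the right-hand side of~\eqref{eq:linrelaxofKI} is itself a convex set: $P_{\mathbf p}$ is a convex hull and hence convex, the slab $\{(\mathbf x,\bm\pi)\in\R^{n+m} : \bm\pi\le\mathbf 0\}$ is an intersection of halfspaces and hence convex, the intersection of two convex sets is convex, and the coordinate projection $(\cdot)_{\mathbf x}$ of a convex set is again convex. Consequently, once I show the membership $K_I\subseteq\bigl(P_{\mathbf p}\cap\{\bm\pi\le\mathbf 0\}\bigr)_{\mathbf x}$ at the level of points, the inclusion for $\mathop{\mathrm{conv}}(K_I)$ follows at once, since $\mathop{\mathrm{conv}}(K_I)$ is by definition the smallest convex set containing $K_I$.

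The core step is therefore to verify that every $\mathbf x\in K_I$ lies in $\bigl(P_{\mathbf p}\cap\{\bm\pi\le\mathbf 0\}\bigr)_{\mathbf x}$, and here the two defining conditions of $K_I$ play distinct roles. Fix such an $\mathbf x$. From $\mathbf x\in\Z^n$ together with $\mathbf l\le\mathbf x\le\mathbf u$ I get $\mathbf x\in[\mathbf l,\mathbf u]\cap\Z^n$, so the point $\bigl(\mathbf x,\mathbf p(\mathbf x)\bigr)$ is one of the finitely many generators of $P_{\mathbf p}$; in particular $\bigl(\mathbf x,\mathbf p(\mathbf x)\bigr)\in P_{\mathbf p}$. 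The remaining defining condition of $K_I$, namely $\mathbf p(\mathbf x)\le\mathbf 0$, is precisely what places this generator inside the slab, so $\bigl(\mathbf x,\mathbf p(\mathbf x)\bigr)\in P_{\mathbf p}\cap\{\bm\pi\le\mathbf 0\}$. Projecting onto the $\mathbf x$-coordinates yields $\mathbf x\in\bigl(P_{\mathbf p}\cap\{\bm\pi\le\mathbf 0\}\bigr)_{\mathbf x}$, which is exactly what is needed.

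Combining the two steps gives the claimed inclusion $\mathop{\mathrm{conv}}(K_I)\subseteq\bigl(P_{\mathbf p}\cap\{\bm\pi\le\mathbf 0\}\bigr)_{\mathbf x}$. I do not anticipate a genuine obstacle: the whole argument is a direct containment-plus-convexity reduction, and the only point demanding care is the bookkeeping that the box and integrality constraints $\mathbf l\le\mathbf x\le\mathbf u$, $\mathbf x\in\Z^n$ guarantee membership in the generating set of $P_{\mathbf p}$, while the polynomial inequalities $\mathbf p(\mathbf x)\le\mathbf 0$ independently guarantee membership in the slab $\{\bm\pi\le\mathbf 0\}$. I would also stress, consistent with the discussion and Figure~\ref{fig:x1x2leq1} preceding the statement, that only one inclusion can be expected: the reverse containment fails in general because $\mathop{\mathrm{conv}}(K_I)$ may pick up lattice points violating the polynomial constraints, so this yields a relaxation rather than an exact description.
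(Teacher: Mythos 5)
Your proposal is correct and follows the same route as the paper: reduce to showing $K_I\subseteq\bigl(P_{\mathbf p}\cap\{\bm\pi\le\mathbf 0\}\bigr)_{\mathbf x}$ (the paper's ``it suffices to show'' step, which you justify explicitly via convexity of the projected intersection), and then verify pointwise that each $\mathbf x\in K_I$ gives a generator $(\mathbf x,\mathbf p(\mathbf x))$ of $P_{\mathbf p}$ lying in the slab. The only difference is that you spell out the convexity reduction that the paper leaves implicit.
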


\begin{proof}
  It suffices to show that
  \begin{math}
    K_I \subseteq
    \big(P_{\mathbf p}  \cap \{ (\mathbf x, {\bm{\pi}}) \in \R^{n+m} : {\bm{\pi}} \leq \mathbf0\}
    \big)_{\mathbf x}
  \end{math}.
  Let us consider $\mathbf x \in K_I \subseteq [\mathbf l,\mathbf u] \cap \Z^n$. By definition,
  $\big(\mathbf x,\mathbf p(\mathbf x) \big) \in P_{\mathbf p}$. Moreover, we have $\mathbf p(\mathbf x) \leq \mathbf0$. This implies
  \begin{math}
    \big(\mathbf x, \mathbf p(\mathbf x) \big) \in
    \big( P_{\mathbf p}  \cap \{ (\mathbf x,{\bm{\pi}}) \in \R^{n+m} : {\bm{\pi}} \leq \mathbf0 \} \big),
  \end{math}
  and thus,
  \[
    \mathbf x \in
    \big(
    P_{\mathbf p} \cap \{ (\mathbf x, {\bm{\pi}}) \in \R^{n+m} : {\bm{\pi}} \leq \mathbf0 \}
    \big)_{\mathbf x}.
  \]
\end{proof}

Note that even in the univariate case, equality in
Formula~(\ref{eq:linrelaxofKI}) of Proposition~\ref{prop:linrelaxofKI}
does not always hold. For instance, if
\begin{math}
  K_I = \{ x \in \Z : x^2 - 5 \leq 0,\; -3 \leq x \leq 5 \}
\end{math},
then
\begin{displaymath}
  \mathop{\mathrm{conv}}(K_I) = [-2,2] \neq
  [-2.2, 2.2] \subseteq
  \big(P_{x^2}\; \cap\; \{ (x, \pi) \in \R^2 : \pi - 5 \leq 0 \}
  \big)_x. \end{displaymath}
Although even in very simple cases the sets $\mathop{\mathrm{conv}}(K_I)$ and
\begin{math}
    \big(
    P_{\mathbf p}  \cap \{ (\mathbf x,{\bm{\pi}}) :  {\bm{\pi}} \leq \mathbf0 \}
    \big)_{\mathbf x}
\end{math}
differ, it is still possible that the integer points in $K_I$ and
\begin{math}
  \big(P_{\mathbf p}  \cap \{ (\mathbf x,{\bm{\pi}}) : {\bm{\pi}} \leq \mathbf0 \} \big)_{\mathbf x}
\end{math}
are equal. In our example with
$K_I = \{ x \in \Z : x^2 - 5 \leq 0,\; -3 \leq x \leq 5 \}$,
we then obtain,
\begin{displaymath}
  K_I = \{-2,-1,0,1,2\} = [-2.2, 2.2] \cap \Z.
\end{displaymath}

Of course, for any $\mathbf p \in \Z[\mathbf x]^m$ we have that
  \begin{equation}
    \label{proposfund}
    K_I = \{ \mathbf x \in \Z^n : \mathbf p(\mathbf x) \leq \mathbf0,\;
    \mathbf l \leq \mathbf x \leq \mathbf u \}
    \quad \subseteq\quad
    \big(
    P_{\mathbf p}  \cap \{ (\mathbf x, {\bm{\pi}}) : {\bm{\pi}} \leq \mathbf0\}
    \big)_{\mathbf x} \cap \Z^n.
  \end{equation}
The key question here is when equality holds in Formula~(\ref{proposfund}).

\begin{theorem}
  \label{th:charofKI}
  Let $\mathbf p \in \Z[\mathbf x]^m$ and $K_I = \{\mathbf x \in \Z^n : \mathbf p(\mathbf x) \leq \mathbf0,\;
  \mathbf l \leq \mathbf x \leq \mathbf u\}$.
  Then,
  \begin{displaymath}
    K_I \;=\;
    \big( P_{\mathbf p}  \cap \{ (\mathbf x, {\bm{\pi}}) : {\bm{\pi}} \leq \mathbf0\} \big)_{\mathbf x}
    \; \cap\; \Z^n
  \end{displaymath}
  holds if every  polynomial
  $p' \in \{ p_i : i =1,\ldots,m \}$ satisfies
  the following condition
  \begin{equation}
    \label{eq:iff-condition}
    p' \Big( \sum_{\mathbf k} \lambda_{\mathbf k} \mathbf k \Big)\; -\;
    \sum_{\mathbf k} \lambda_{\mathbf k} p' (\mathbf k) \quad <\quad  1,
  \end{equation}
 for all
  \begin{math}
    \lambda_{\mathbf k} \geq 0,\; \mathbf k \in [\mathbf l,\mathbf u] \cap \Z^n,\;
    \sum_{\mathbf k} \lambda_{\mathbf k} = 1
    \mbox{ and }
    \sum_{\mathbf k} \lambda_{\mathbf k} \mathbf k \in \Z^n.
  \end{math}
\end{theorem}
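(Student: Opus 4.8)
The plan is to establish the nontrivial inclusion ``$\supseteq$'', since ``$\subseteq$'' is precisely Formula~(\ref{proposfund}), which is valid for every $\mathbf p$. So I would fix an integer point $\mathbf x \in \Z^n$ belonging to $\big(P_{\mathbf p} \cap \{(\mathbf x, {\bm{\pi}}) : {\bm{\pi}} \leq \mathbf 0\}\big)_{\mathbf x}$ and show that $\mathbf x \in K_I$; that is, $\mathbf l \leq \mathbf x \leq \mathbf u$ together with $\mathbf p(\mathbf x) \leq \mathbf 0$. By the definition of $P_{\mathbf p}$ as the convex hull of the finite point set $\{(\mathbf k, \mathbf p(\mathbf k)) : \mathbf k \in [\mathbf l, \mathbf u] \cap \Z^n\}$, the membership of $\mathbf x$ furnishes multipliers $\lambda_{\mathbf k} \geq 0$ with $\sum_{\mathbf k} \lambda_{\mathbf k} = 1$ and some ${\bm{\pi}} \leq \mathbf 0$ satisfying $\mathbf x = \sum_{\mathbf k} \lambda_{\mathbf k} \mathbf k$ and ${\bm{\pi}} = \sum_{\mathbf k} \lambda_{\mathbf k} \mathbf p(\mathbf k)$. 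The box inequalities $\mathbf l \leq \mathbf x \leq \mathbf u$ come for free, since $\mathbf x$ is a convex combination of points of $[\mathbf l, \mathbf u]$.

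The crux is to bound each $p_i(\mathbf x)$ from above. The key observation is that the chosen multipliers satisfy exactly the side conditions governing hypothesis~(\ref{eq:iff-condition}): indeed $\sum_{\mathbf k} \lambda_{\mathbf k} \mathbf k = \mathbf x \in \Z^n$ by assumption. Applying~(\ref{eq:iff-condition}) with $p' = p_i$ therefore yields
\[
  p_i(\mathbf x) = p_i\Big(\sum_{\mathbf k} \lambda_{\mathbf k} \mathbf k\Big)
    \;<\; 1 + \sum_{\mathbf k} \lambda_{\mathbf k} p_i(\mathbf k)
    \;=\; 1 + \pi_i \;\leq\; 1,
\]
where the last inequality uses ${\bm{\pi}} \leq \mathbf 0$.

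The decisive final step is an integrality rounding. Since $p_i \in \Z[\mathbf x]$ has integer coefficients and $\mathbf x \in \Z^n$, the number $p_i(\mathbf x)$ is an integer; an integer strictly below $1$ is at most $0$, so $p_i(\mathbf x) \leq 0$. As this holds for every $i \in M$, we conclude $\mathbf p(\mathbf x) \leq \mathbf 0$ and hence $\mathbf x \in K_I$, which proves the reverse inclusion and the theorem.

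I expect the only genuine subtlety to be conceptual rather than computational: recognizing that the strict bound ``$< 1$'' in~(\ref{eq:iff-condition}) is engineered precisely for this rounding argument. Condition~(\ref{eq:iff-condition}) measures the gap between $p_i$ evaluated at the integral convex combination $\mathbf x$ and the averaged value $\sum_{\mathbf k} \lambda_{\mathbf k} p_i(\mathbf k)$; forcing that gap to stay below $1$ is exactly what prevents the integer $p_i(\mathbf x)$ from reaching $1$ while the average remains $\leq 0$. Once this is seen, unwinding the convex-hull membership and reading off the box bounds are entirely routine.
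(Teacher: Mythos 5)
Your proof is correct and follows essentially the same route as the paper's: both unwind the convex-hull membership into multipliers $\lambda_{\mathbf k}$, use ${\bm{\pi}}\leq\mathbf 0$ together with the integrality of $p_i(\mathbf x)$ for $p_i\in\Z[\mathbf x]$ and $\mathbf x\in\Z^n$, and invoke the strict bound in~(\ref{eq:iff-condition}) to force $p_i(\mathbf x)\leq 0$. The only difference is presentational: the paper argues by contradiction (assuming $p_{i_0}(\mathbf x)\geq 1$ and deriving a violation of~(\ref{eq:iff-condition})), whereas you apply the condition directly and round down, which is the same argument read forwards.
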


\begin{proof}
  Using Formula~(\ref{proposfund}), we have to show that
  \begin{math}
    \big( P_{\mathbf p}  \cap \{ (\mathbf x,{\bm{\pi}}) : {\bm{\pi}} \leq \mathbf0\}
    \big)_{\mathbf x}
    \cap \Z^n
    \subseteq K_I
  \end{math}
  if all $p_i$, $i \in \{1,\ldots,m\}$, satisfy
  Equation~(\ref{eq:iff-condition}).
  Let
  \begin{math}
    \mathbf x \in
    \big( P_{\mathbf p}  \cap \{ (\mathbf x, {\bm{\pi}}) \in \R^{m+n} : {\bm{\pi}} \leq \mathbf0\}
    \big)_{\mathbf x} \cap \Z^n.
  \end{math}
  Then, there exists a ${\bm{\pi}} \in \R^m$ such that
  \begin{displaymath}
    (\mathbf x, {\bm{\pi}}) \in
    P_{\mathbf p}  \cap \{ (\mathbf x,{\bm{\pi}}) \in \R^{n+m} : {\bm{\pi}} \leq 0 \}.
  \end{displaymath}
  By definition, ${\bm{\pi}}\leq \mathbf0$. Furthermore, there must exist nonnegative real numbers
  $\lambda_{\mathbf k} \geq 0$, $\mathbf k \in [\mathbf l,\mathbf u]\cap \Z^n$, such that $\sum_{\mathbf k} \lambda_{\mathbf k} = 1$ and
  $(\mathbf x,{\bm{\pi}}) = \sum_{\mathbf k} \lambda_{\mathbf k} (\mathbf k,\mathbf p(\mathbf k))$.
  Suppose that there exists an index $i_0$ such that
the inequality $p_{i_0}(\mathbf x)\leq 0$ is violated.
  The fact that $p_{i_0} \in \Z[\mathbf x]$ and $\mathbf x \in \Z^n$, implies that
  $p_{i_0}(\mathbf x) \geq 1$. Thus, we obtain
  \begin{equation*}
    \sum_{\mathbf k} \lambda_{\mathbf k} p_{i_0}(\mathbf k) = \pi_{i_0}
    \leq  0 < 1 \leq p_{i_0}(\mathbf x) =
    p_{i_0} \Big( \sum_{\mathbf k} \lambda_{\mathbf k} \mathbf k \Big),
  \end{equation*}
  or equivalently,
  \begin{math}
    p_{i_0} \big( \sum_{\mathbf k} \lambda_{\mathbf k} \mathbf k \big) - \sum_{\mathbf k}
    \lambda_{\mathbf k}  p_{i_0}(\mathbf k) \geq   1.
  \end{math}
Because this is a contradiction to our assumption, we have that $p_i(\mathbf x) \leq 0$ for all $i$. Hence, $\mathbf x \in K_I$.
  This proves the claim.
\end{proof}

The next example illustrates the statement of
Theorem~\ref{th:charofKI}.
\begin{example}
  \label{ex:cps}
  Let $p \in \Z[\mathbf x]$, $\mathbf x \mapsto p(\mathbf x):= 3 x_1^2 + 2 x_2^2-19$. We consider the semi-algebraic set
  \begin{equation*}
    K = \{ \mathbf x \in \R^2\; |\; p(\mathbf x) \leq 0,\;
    0 \leq x_1 \leq 3,\; 0 \leq x_2 \leq 3 \}
    \mbox{\quad and\quad }
    K_I = K \cap \Z^2.
  \end{equation*}
  It turns out that the convex hull of $K_I$ is described by
  $x_1 + x_2 \leq 3$, $0 \leq x_1 \leq 2$ and $0 \leq x_2$.
  Notice that the poly\-nomial~$p$ is convex.
  This condition ensures that $p$ satisfies
  Equation~(\ref{eq:iff-condition}) of Theorem~\ref{th:charofKI}.
  We obtain in this case
  \begin{equation*}
    \begin{array}{llllll}
      \big(
      P_p  \cap \big\{ (\mathbf x,\pi) \in \R^3 :  \pi \leq 0\}
      \big)_{\mathbf x}
     & = \;  \{ & \mathbf x \in \R^2\; : & 9 x_1 + 6 x_2   \leq 29,\;\\
      && & 3 x_1 + 10 x_2  \leq 31,\;\\
      && & 9 x_1 + 10 x_2  \leq 37,\\
      && & 15x_1 + 2 x_2   \leq 37,\;\\
      && & 15x_1 + 6 x_2  \leq 41,\;\\
      && &  -x_1 \leq 0,\; 0 \leq x_2  \leq 3 \big\}.
    \end{array}
  \end{equation*}
  The sets $K$, $\mathop{\mathrm{conv}}(K_I)$ and
  $\big(P_p  \cap \{ (\mathbf x,\pi)\in \R^3 : \pi \leq 0\} \big)_{\mathbf x}$
  are illustrated in Figure \ref{fig:ex_cps}.
  Note that here $K_I  = \big(P_p  \cap \{ (\mathbf x,\pi) : \pi \leq 0\} \big)_{\mathbf x} \cap \Z^2$.
\end{example}

\begin{figure}[th]
  \centering
  \ifpdf
    \input{fig-intconv-proj.pdf_t}
    \else
    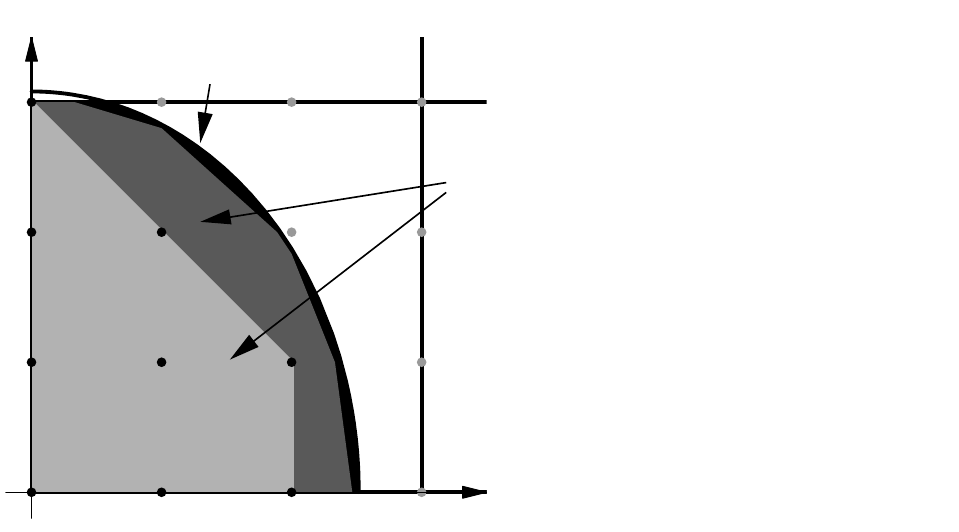
    \fi
  \caption{\small  Illustration of Theorem \ref{th:charofKI} in Example
    \ref{ex:cps}.}
  \label{fig:ex_cps}
\end{figure}

\bigskip
Next we introduce a large class of
nonlinear functions for which one can ensure that equality holds in
Formula~(\ref{proposfund}).

\begin{definition}
  \label{def:integer-convex}
  \emph{(Integer-Convex Polynomials)}
   \\
  A polynomial $f\in\Z[\mathbf x]$ is said to be
  \emph{integer-convex} on $[\mathbf l,\mathbf u]\subseteq\R^n$,
  if for any finite subset of nonnegative real numbers
  \begin{math}
    \{ \lambda_{\mathbf k} \}_{\mathbf k \in [\mathbf l,\mathbf u] \cap \Z^n} \subseteq \R_+
  \end{math}
  with
  \begin{math}
    \sum_{\mathbf k \in [\mathbf l,\mathbf u] \cap \Z^n} \lambda_{\mathbf k} = 1
  \end{math}
  and
  \begin{math}
    \sum_{\mathbf k \in [\mathbf l,\mathbf u] \cap \Z^n} \lambda_{\mathbf k} \mathbf k \in [\mathbf l,\mathbf u] \cap \Z^n,
  \end{math}
  the following inequality holds:
  \begin{equation}
    \label{eq:jensen}
    f \Big( \sum_{\mathbf k \in [\mathbf l,\mathbf u] \cap \Z^n} \lambda_{\mathbf k} \mathbf k \Big)
    \quad \leq\quad \sum_{\mathbf k \in [\mathbf l,\mathbf u] \cap \Z^n} \lambda_{\mathbf k} f(\mathbf k).
  \end{equation}
  If (\ref{eq:jensen}) holds strictly for all
  $\{ \lambda_{\mathbf k} \}_{\mathbf k \in [\mathbf l,\mathbf u] \cap \Z^n} \subseteq \R_+$,
  and $\mathbf x \in [\mathbf l,\mathbf u] \cap \Z^n$ such that
  $\sum_{\mathbf k} \lambda_{\mathbf k} = 1$, $\mathbf x =\sum_{\mathbf k} \lambda_{\mathbf k} \mathbf k$, and
  $\lambda_{\mathbf x} < 1$, then the polynomial $f$ is called \emph{strictly integer-convex} on $[\mathbf l,\mathbf u]$.
\end{definition}

  By definition, a (strictly) convex polynomial is (strictly)
  in\-te\-ger-con\-vex. Conversely, a (strictly) integer-convex polynomial is not necessarily (strictly)
  convex. Figure~\ref{fig:integer-convex} gives an example.

\begin{figure}[th]
\begin{center}
  \ifpdf
    \input{fig-intconv-polynomial.pdf_t}
    \else
    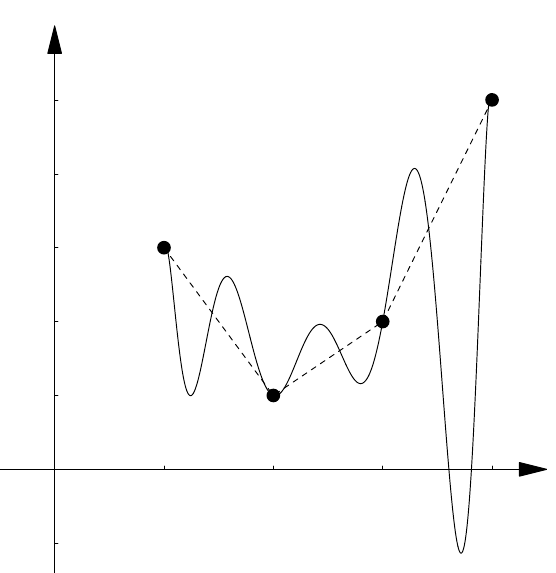
    \fi
\caption{\small The graph of an integer-convex polynomial $p$ on $[1,4]$.} \label{fig:integer-convex}
\end{center}
\end{figure}

Integer convexity is inherited under taking conic combinations and
applying a composition rule.
\begin{enumerate}[(a)]
  \item
    For any finite number of integer-convex polynomials
    \begin{math}
      f_s \in \Z[\mathbf x],\; s \in \{1,...,t\},
    \end{math}
    on $[\mathbf l,\mathbf u]$, and nonnegative integers
    \begin{math}
      a_s \in \Z_+,\; s \in \{1,\ldots,t\},
    \end{math}
    the polynomial $f \in \Z[\mathbf x]$,
    \begin{math}
      \mathbf x \mapsto f(\mathbf x) := \sum_{s=1}^t a_s f_s(\mathbf x),
    \end{math}
    is integer-convex on $[\mathbf l,\mathbf u]$.
  \item
    Let $\mathbf l,\mathbf u \in \Z^n$ and let $h \in \Z[\mathbf x]$,
    \begin{math}
      \mathbf x \mapsto h(\mathbf x):= \mathbf c^\top \mathbf x + \gamma,
    \end{math}
    be a linear function. Setting $W=\big\{\mathbf c^\top \mathbf x+\gamma : \mathbf x \in [\mathbf l,\mathbf u]\big\}$, for every integer-convex univariate polynomial $q \in \Z[\mathbf w]$, the function
    \begin{math}
      p \in \Z[\mathbf x],\; \mathbf x \mapsto p(\mathbf x):= q(h(\mathbf x))
    \end{math}
    is integer-convex on $[\mathbf l,\mathbf u]$.
  \end{enumerate}

Indeed, integer-convex polynomial functions capture a lot of
combinatorial structure. In particular, we can characterize the set of
all vertices in an associated polyhedron. Most importantly, if $f$ is
integer-convex on $[\mathbf l,\mathbf u]$, then this ensures that for any integer
point $\mathbf x \in [\mathbf l,\mathbf u]$ the value~$f(\mathbf x)$ is not underestimated by all
$\pi\in\R$ with $(\mathbf x,\pi)\in P_{f}$, where $P_f$ is
the polytope associated with the graph of the polynomial $f\in\Z[\mathbf x]$.
\begin{theorem}
  \label{prop:decomposition_of_icfct}
For a polynomial $f \in \Z[\mathbf x]$ and $\mathbf l, \mathbf u \in \Z^n$, $\mathbf l+\mathbf 1 <\mathbf u$, let
$$
    P_{f} = \mathop{\mathrm{conv}}
    \Big(
    \big\{
    \big(\mathbf x, f(\mathbf x) \big) \in \Z^{n+1}\; \big|\;
    \mathbf x \in [\mathbf l,\mathbf u] \cap \Z^n
    \big\}
    \Big).
$$
Then, $f$ is integer-convex on $[\mathbf l,\mathbf u]$ is equivalent to the condition
that for all $(\mathbf x,\pi) \in P_{f},\; \mathbf x \in \Z^n$  we have that $f(\mathbf x)
\leq \pi$. Moreover, if  $f$ is strictly integer-convex on $[\mathbf l,\mathbf u]$,
then for every $\mathbf x \in [\mathbf l,\mathbf u] \cap\Z^n$, the point $(\mathbf x, f(\mathbf x))$ is a
vertex of $P_f$.
\end{theorem}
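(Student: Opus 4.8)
The plan is to read off both assertions directly from the defining convex-combination description of $P_f$. By the definition of the convex hull, a pair $(\mathbf x,\pi)$ lies in $P_f$ precisely when there exist coefficients $\lambda_{\mathbf k}\geq 0$, indexed by $\mathbf k\in[\mathbf l,\mathbf u]\cap\Z^n$, with $\sum_{\mathbf k}\lambda_{\mathbf k}=1$, $\mathbf x=\sum_{\mathbf k}\lambda_{\mathbf k}\mathbf k$ and $\pi=\sum_{\mathbf k}\lambda_{\mathbf k}f(\mathbf k)$. So the geometric condition ``$f(\mathbf x)\leq\pi$ for every $(\mathbf x,\pi)\in P_f$ with $\mathbf x\in\Z^n$'' and the Jensen-type inequality~(\ref{eq:jensen}) in Definition~\ref{def:integer-convex} are two encodings of the same relation, and the whole argument amounts to matching them up.

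For the equivalence I would argue both directions from this dictionary. Assuming first that $f$ is integer-convex, I would take $(\mathbf x,\pi)\in P_f$ with $\mathbf x\in\Z^n$ and write $(\mathbf x,\pi)=\sum_{\mathbf k}\lambda_{\mathbf k}(\mathbf k,f(\mathbf k))$ as above. Since $\mathbf x=\sum_{\mathbf k}\lambda_{\mathbf k}\mathbf k$ is a convex combination of points of the box $[\mathbf l,\mathbf u]$, it lies in $[\mathbf l,\mathbf u]$, and together with $\mathbf x\in\Z^n$ this gives $\mathbf x\in[\mathbf l,\mathbf u]\cap\Z^n$, so~(\ref{eq:jensen}) applies and yields $f(\mathbf x)\leq\sum_{\mathbf k}\lambda_{\mathbf k}f(\mathbf k)=\pi$. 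Conversely, assuming the geometric condition, I would take any weights $\{\lambda_{\mathbf k}\}$ as in Definition~\ref{def:integer-convex} with $\mathbf x:=\sum_{\mathbf k}\lambda_{\mathbf k}\mathbf k\in[\mathbf l,\mathbf u]\cap\Z^n$, set $\pi:=\sum_{\mathbf k}\lambda_{\mathbf k}f(\mathbf k)$, and observe that $(\mathbf x,\pi)\in P_f$ with $\mathbf x\in\Z^n$, so the hypothesis gives $f(\mathbf x)\leq\pi$, which is exactly~(\ref{eq:jensen}).

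For the ``moreover'' part I would use the standard fact that, for a polytope presented as $\conv(S)$ with $S$ finite, a generator $s_0\in S$ is a vertex if and only if $s_0\notin\conv(S\setminus\{s_0\})$. Fixing $\mathbf x_0\in[\mathbf l,\mathbf u]\cap\Z^n$, I would suppose for contradiction that $(\mathbf x_0,f(\mathbf x_0))$ is not a vertex of $P_f$, so that it is a convex combination of the remaining generators: $(\mathbf x_0,f(\mathbf x_0))=\sum_{\mathbf k\neq\mathbf x_0}\mu_{\mathbf k}(\mathbf k,f(\mathbf k))$ with $\mu_{\mathbf k}\geq 0$ and $\sum_{\mathbf k\neq\mathbf x_0}\mu_{\mathbf k}=1$. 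Reading off both coordinates gives $\mathbf x_0=\sum_{\mathbf k\neq\mathbf x_0}\mu_{\mathbf k}\mathbf k$ and $f(\mathbf x_0)=\sum_{\mathbf k\neq\mathbf x_0}\mu_{\mathbf k}f(\mathbf k)$. Since here the coefficient of $\mathbf x_0$ is $0<1$, strict integer-convexity forces $f(\mathbf x_0)<\sum_{\mathbf k\neq\mathbf x_0}\mu_{\mathbf k}f(\mathbf k)$, contradicting the equality just derived; hence $(\mathbf x_0,f(\mathbf x_0))$ is a vertex.

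Neither half involves a deep obstacle, and the effort is essentially bookkeeping. The one point requiring care, which I regard as the crux, is to invoke strict integer-convexity with its hypothesis $\lambda_{\mathbf x_0}<1$ genuinely satisfied; this is precisely why \emph{strict} rather than plain integer-convexity is needed for the vertex conclusion, and it is automatic because expressing $(\mathbf x_0,f(\mathbf x_0))$ through the other generators assigns weight $0$ to $\mathbf x_0$ itself. The nondegeneracy hypothesis $\mathbf l+\mathbf 1<\mathbf u$ plays no essential role beyond guaranteeing that the box carries enough lattice points for the statement to be meaningful.
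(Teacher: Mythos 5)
Your proposal is correct and follows essentially the same route as the paper's own proof: both directions of the equivalence are read off from the convex-combination description of $P_f$ (you phrase the converse directly where the paper argues by contraposition, which is the same argument), and the vertex claim is obtained from the strict Jensen inequality applied to a representation of $(\mathbf x_0,f(\mathbf x_0))$ by the other generators, where the weight on $\mathbf x_0$ is $0<1$. Your explicit appeal to the criterion that a generator of $\conv(S)$ is a vertex iff it does not lie in the convex hull of the remaining generators merely spells out what the paper leaves implicit.
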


\begin{proof}
First let us assume that $f$ is integer-convex on $[\mathbf l,\mathbf u]$. Let
$(\mathbf x,\pi) \in P_{f}$ such that $\mathbf x \in \Z^n$. Then, there exist
nonnegative real numbers
  \begin{math}
    \{\lambda_{\mathbf k}\}_{\mathbf k \in [\mathbf l,\mathbf u] \cap \Z^n} \subseteq \R_+,
  \end{math}
  \begin{math}
    \sum_{\mathbf k} \lambda_{\mathbf k} = 1,
  \end{math}
  such that
  \begin{math}
    ( \mathbf x, \pi) = \sum_{\mathbf k} \lambda_{\mathbf k} \left( \mathbf k, f(\mathbf k) \right)
  \end{math}.
  It follows that
  \begin{displaymath}
    f(\mathbf x) = f \Big( \sum_{\mathbf k} \lambda_{\mathbf k} \mathbf k \Big) \quad \leq\quad
    \sum_{\mathbf k} \lambda_{\mathbf k} f (\mathbf k) = \pi.
  \end{displaymath}

Next we assume that $f$ is not integer-convex on $[\mathbf l,\mathbf u]$.
Then, there exists a subset of nonnegative real numbers
  \begin{math}
    \{ \lambda_{\mathbf k}\}_{\mathbf k \in [\mathbf l,\mathbf u] \cap \Z^n} \subseteq \R_+
  \end{math}
  with
  \begin{math}
    \sum_{\mathbf k} 
    \lambda_{\mathbf k} = 1
  \end{math}
  such that
  \begin{equation*}
    \mathbf x := \sum_{\mathbf k} 
    \lambda_{\mathbf k} \mathbf k \in [\mathbf l,\mathbf u] \cap \Z^n
    \mbox{ and }
    \pi :=\sum_{\mathbf k} \lambda_{\mathbf k} f (\mathbf k) <
    f \Big( \sum_{\mathbf k} \lambda_{\mathbf k} \mathbf k \Big) = f(\mathbf x).
  \end{equation*}
  But then,
  \begin{math}
    (\mathbf x, \pi) = \sum_{\mathbf k} \lambda_{\mathbf k} (\mathbf k,f(\mathbf k)) \in P_{f}
  \end{math}
  violates the inequality
  \begin{math}
    f(\mathbf x) \leq \pi.
  \end{math}
This is a contradiction to the assumption.

If $f$ is strictly integer-convex on $[\mathbf l,\mathbf u]$, then for each
$\mathbf x\in  [\mathbf l,\mathbf u] \cap \Z^n$, we have that
\[
  f(\mathbf x) < \sum_{\mathbf k \in [\mathbf l,\mathbf u] \cap \Z^n\backslash \{\mathbf x\}}  \lambda_{\mathbf k} f(\mathbf k),
\]
for all $\lambda_{\mathbf k} \in \R_+$, $\mathbf k\in [\mathbf l,\mathbf u]\cap\Z^n\setminus\{\mathbf x\}$, with
\begin{math}
    \mathbf x = \sum_{\mathbf k} \lambda_{\mathbf k} \mathbf k
\end{math}
and $\sum_{\mathbf k} \lambda_{\mathbf k} =1$. Thus, every point $\big(\mathbf x, f(\mathbf x)\big)$,
$\mathbf x \in [\mathbf l,\mathbf u] \cap \Z^n$, is a vertex of $P_{f}$.
\end{proof}

\section{Convex integer minimization}
\label{s:convex-min}

The complexity of the case of convex integer minimization is set apart from
the general case of integer polynomial optimization by the existence of
bounding results for the coordinates of optimal solutions.  Once a finite
bound can be computed, it is clear that an algorithm for minimization exists.
Thus the fundamental incomputability result for integer polynomial
optimization (Theorem \ref{th:polyopt-incomputable}) does not apply to the case of
convex integer minimization.

The first bounds for the optimal integer solutions to convex minimization
problems were proved by \cite{khachiyan:1983:polynomial-programming,
tarasov-khachiyan-1980}.  We present the sharpened bound that was
obtained by
\cite{bank-krick-mandel-dolerno-1991,bank-heintz-krick-mandel-solerno-1993}
for the more general case of quasi-convex polynomials.  This bound is a
consequence of an efficient theory of quantifier elimination over the reals;
see \cite{Renegar:1992:CCGc}.
\begin{theorem}\label{th:encoding-length-convex-optimum}
  Let $f, g_1,\dots,g_m \in\Z[x_1,\dots,x_n]$ be quasi-convex polynomials
  of degree at most~$d\geq2$, whose coefficients have a binary encoding length
  of at most~$\ell$.  Let
  \begin{displaymath}
    F = \bigl\{\, \mathbf x\in\R^n : g_i(\mathbf x) \leq  0\quad
    \text{for $i=1,\dots,m$} \,\bigr\}
  \end{displaymath}
  be the (continuous) feasible region.  If the integer minimization
  problem
  \begin{math}
    \min\{\, f(\mathbf x): \mathbf x\in F\cap\Z^n \,\}
  \end{math}
  is bounded, there
  exists a radius~$R\in\Z_+$ of binary encoding length at most $(md)^{\mathrm
    O(n)} \ell$ such that
  \begin{displaymath}
    \min\bigl\{\, f(\mathbf x): \mathbf x\in F\cap\Z^n \,\bigr\}
    = \min\bigl\{\, f(\mathbf x): \mathbf x\in F\cap\Z^n, \ \mathopen\| \mathbf x
    \mathclose\| \leq R \,\bigr\}.
  \end{displaymath}
\end{theorem}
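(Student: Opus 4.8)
The plan is to reduce the integer statement to a quantitative bound on a point in a nonempty semialgebraic set, and then to invoke the effective quantifier-elimination bounds of Renegar \cite{Renegar:1992:CCGc}. I would begin by recording two structural observations. Since each $g_i$ is quasi-convex, every sublevel set $\{\mathbf x : g_i(\mathbf x)\le 0\}$ is convex, so the feasible region $F$ is a convex semialgebraic set; likewise each set $\{\mathbf x : f(\mathbf x)\le\alpha\}$ is convex. Moreover, because $f\in\Z[\mathbf x]$ is evaluated at integer points, $f$ takes integer values on $F\cap\Z^n$; as the problem is bounded below, the minimum
\[
  \mu^\star \;=\; \min\{\, f(\mathbf x) : \mathbf x\in F\cap\Z^n \,\}
\]
is a well-defined integer and is attained.

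These observations reduce the theorem to the following claim: there is a radius $R\in\Z_+$ of binary length $(md)^{\mathrm{O}(n)}\ell$ such that the convex semialgebraic set
\[
  C \;=\; \{\, \mathbf x\in\R^n : g_i(\mathbf x)\le 0 \ (i=1,\dots,m),\ f(\mathbf x)\le\mu^\star \,\}
\]
contains an integer point of norm at most $R$. Indeed, the integer points of $C$ are exactly the optimal solutions of the program, so once one of them is known to lie in the ball $\{\,\|\mathbf x\|\le R\,\}$, intersecting the feasible region with that ball leaves the optimal value unchanged.

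To control $C$ I would set up first-order formulas over the reals to which Renegar's bounds apply. Feasibility of $C$, the value $\mu^\star$, and the least norm attained by a point of $C$ are each definable by a prenex formula in $\mathrm{O}(n)$ variables whose matrix is built from $f,g_1,\dots,g_m$; hence the formulas involve at most $m+1$ atomic predicates of degree at most $d$ with coefficients of length $\mathrm{O}(\ell)$. Renegar's quantifier-elimination estimates then produce equivalent quantifier-free descriptions whose polynomials have degree $(md)^{\mathrm{O}(n)}$ and coefficients of binary length $(md)^{\mathrm{O}(n)}\ell$, and the accompanying root-separation and sample-point bounds show that a nonempty such set already meets the ball of radius $2^{(md)^{\mathrm{O}(n)}\ell}$, which is exactly the binary length claimed for $R$.

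The main obstacle is that these estimates live over the real closed field, whereas the claim requires a \emph{lattice} point in $C$ of bounded norm, and a priori a thin unbounded convex region could carry its lattice points far from the origin. I would bridge this gap using convexity together with the controlled complexity of $C$: having located, via the sample-point bound above, a continuous minimizer $\bar{\mathbf x}$ of $f$ on $F$ of norm at most $2^{(md)^{\mathrm{O}(n)}\ell}$, I would apply a flatness and lattice-width argument to the convex body $C$, whose defining data now carry the degree and coefficient bounds just obtained, to produce a lattice point of $C$ within a distance of $\bar{\mathbf x}$ that is again governed by those bounds. The delicate point throughout, and the part that determines whether the final exponent stays $\mathrm{O}(n)$ and the dependence on $\ell$ stays linear, is the careful propagation of degrees and coefficient lengths through each quantifier-elimination step and through the flatness estimate.
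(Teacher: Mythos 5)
The chapter itself does not actually prove Theorem~\ref{th:encoding-length-convex-optimum}; it cites Bank et al.\ and attributes the bound to efficient quantifier elimination over the reals, so your starting point (Renegar's estimates) is the right one. Your proposal, however, has two genuine gaps. The first is a circularity in the definition of the set $C$. You claim that the integer optimal value $\mu^\star$ is ``definable by a prenex formula whose matrix is built from $f,g_1,\dots,g_m$'' with coefficients of length $\mathrm{O}(\ell)$. It is not: $\mu^\star$ is a minimum over the \emph{lattice} points of $F$, and integrality is not expressible in the first-order theory of the reals, so no quantifier-elimination or sample-point bound applies to it directly. Consequently the constraint $f(\mathbf x)\leq\mu^\star$ defining $C$ has a right-hand side whose binary length is controlled only by $\|\mathbf x^*\|$ for an optimal integer point $\mathbf x^*$ --- exactly the quantity you are trying to bound --- so you cannot feed $C$ into Renegar's estimates without already knowing the conclusion. (The continuous infimum of $f$ over $F$ \emph{is} definable and of bounded size, but it is a different number, and the continuous minimizer $\bar{\mathbf x}$ you invoke need not even exist.)

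The second gap is the flatness step. Knowing a real point $\bar{\mathbf x}\in C$ of bounded norm does not let you conclude that $C$ contains a lattice point within bounded distance of $\bar{\mathbf x}$: a convex set can contain a bounded real point and an integer point and still have all of its integer points arbitrarily far away (a long thin sliver anchored near the origin whose only lattice points sit at its far end). Khinchine-type flatness only says that a convex body with \emph{no} integer points has bounded lattice width; to extract a bounded-norm integer point from a $C$ that does contain one, you must recurse --- find a flat integer direction $\mathbf w$, note that all integer points of $C$ lie on boundedly many hyperplanes $\mathbf w^\top\mathbf x=k$ with $k$ in a bounded range around $\mathbf w^\top\bar{\mathbf x}$, pass to dimension $n-1$ by a unimodular change of coordinates, and control the degree and coefficient growth of the restricted data at every level. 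That recursion, together with resolving the circularity above, is essentially the entire content of the Bank--Heintz--Krick--Mandel--Solern\'o argument; as written, your proposal reduces the theorem to a statement equivalent to it and then asserts the target of the reduction.
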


\subsection{Fixed dimension}
\label{s:convex-min-fixed-dim}

In fixed dimension, the problem of convex integer minimization can be solved
using variants of Lenstra's algorithm \cite{Lenstra83} for
integer programming.  Indeed, when the dimension~$n$ is fixed, the bound~$R$
given by Theorem \ref{th:encoding-length-convex-optimum} has a binary encoding
size that is bounded polynomially by the input data.  Thus, a Lenstra-type
algorithm can be started with a ``small'' (polynomial-size) initial outer
ellipsoid that includes a bounded part of the feasible region containing an
optimal integer solution.

The first algorithm of this kind for convex integer minimization
was announced by Khachiyan \cite{khachiyan:1983:polynomial-programming}.
In the following we present the variant of Lenstra's algorithm due to Heinz
\cite{heinz-2005:integer-quasiconvex}, which seems to yield the best known
complexity bound for the problem.  The complexity result is the following.
\begin{theorem}\label{th:heinz-complexity}
  Let $f, g_1,\dots,g_m \in\Z[x_1,\dots,x_n]$ be quasi-convex polynomials of
  degree at most~$d\geq2$, whose coefficients have a binary encoding length of
  at most~$\ell$.  There exists an algorithm running in time $m
  \ell^{\mathrm{O}(1)} d^{\mathrm{O}(n)} 2^{\mathrm{O}(n^3)}$ that computes a
  minimizer~$\mathbf x^*\in\Z^n$ of the problem~\eqref{eq:nonlinear-over-nonlinear}
  or reports that no minimizer exists.  If the algorithm outputs a
  minimizer~$\mathbf x^*$, its binary encoding size is $\ell
  d^{\mathrm{O}(n)}$.
\end{theorem}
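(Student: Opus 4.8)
The plan is to realize a Lenstra-type algorithm for the quasi-convex setting and to reduce the minimization problem \eqref{eq:nonlinear-over-nonlinear} to a controlled number of feasibility tests. First I would reduce optimization to feasibility. Since $f$ is quasi-convex, every sublevel set $\{\mathbf x : f(\mathbf x)\le t\}$ is convex, so appending the constraint $f(\mathbf x)\le t$ to the system $g_1(\mathbf x)\le 0,\dots,g_m(\mathbf x)\le 0$ keeps the feasible region convex. Theorem~\ref{th:encoding-length-convex-optimum} provides a radius $R$ of controlled encoding length outside of which no optimal point can lie, so the candidate objective values range over a set with a polynomially bounded description; bisection on $t$ then reduces the whole problem to $\ell\,d^{O(n)}$ feasibility tests of the form ``does $\{\,\mathbf x : g_i(\mathbf x)\le 0,\ f(\mathbf x)\le t,\ \|\mathbf x\|\le R\,\}$ contain an integer point?'', each inheriting a convex feasible region $F'$.

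For a single feasibility test I would run the core Lenstra procedure following Heinz~\cite{heinz-2005:integer-quasiconvex}. The first ingredient is a separation oracle: for a differentiable quasi-convex polynomial $g$ and a point $\mathbf x_0$ with $g(\mathbf x_0)>0$, every feasible point satisfies $\nabla g(\mathbf x_0)^\top(\mathbf x-\mathbf x_0)\le 0$, so the hyperplane through $\mathbf x_0$ with normal $\nabla g(\mathbf x_0)$ separates $\mathbf x_0$ from the region, and this gradient is cheap to evaluate for a degree-$d$ polynomial in $n$ variables with size-$\ell$ coefficients. Feeding this oracle to the ellipsoid method yields an ellipsoidal rounding $\mathcal E\subseteq F'\subseteq \gamma(n)\,\mathcal E$ with $\gamma(n)$ depending only on $n$. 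I would then perform lattice basis reduction in the inner product defined by $\mathcal E$ and invoke the flatness theorem: either the lattice width of $F'$ is large enough to guarantee an integer point, which is read off directly, or $F'$ is flat and therefore covered by a number of parallel lattice hyperplanes bounded in terms of $n$ alone. In the flat case I branch on each hyperplane, eliminate one variable by substitution, and recurse in dimension $n-1$.

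The complexity bookkeeping then bounds the resulting search tree. The recursion has depth $n$; the branching factor at each node is the covering number from the flatness step; and the work per node (the $m$ polynomial evaluations feeding the oracle, the ellipsoid-method iterations, and one lattice reduction) is polynomial in $m$, $\ell$, and the current data sizes. Multiplying per-node cost by tree size, and tracking how the degree $d$ enters the polynomial arithmetic while the flatness bound, the quality of the rounding, and the lattice reduction compound over the $n$ recursion levels, yields the claimed running time $m\,\ell^{O(1)}\,d^{O(n)}\,2^{O(n^3)}$; the dependence on $m$ stays linear because the oracle touches each of the $m$ constraints once per query. The bound $\ell\,d^{O(n)}$ on the encoding size of the returned minimizer follows not from the crude ball radius $R$ but from a sharper estimate on the size of a vertex-like optimal solution, which is also what keeps the $m$-dependence out of the exponents.

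The step I expect to be the main obstacle is the flatness analysis and the accounting that produces exactly the exponent $2^{O(n^3)}$. Two points need care: establishing the width dichotomy with a bound tight enough that the branching constant, compounded with the approximation factors of the rounding and the reduction over the $n$ levels, gives $2^{O(n^3)}$ and no worse; and controlling the bit-sizes of all intermediate quantities — the rounding ellipsoid, the reduced basis, and each substituted subsystem — so that they remain polynomially bounded throughout the recursion despite the exponentially many nodes. The remaining technical check is to verify that quasi-convexity, rather than full convexity, suffices at every place where the argument invokes a separating hyperplane or a sublevel-set property.
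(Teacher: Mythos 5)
Your overall architecture --- binary search on the objective value using the radius bound of Theorem~\ref{th:encoding-length-convex-optimum}, then a Lenstra-type feasibility test built from an ellipsoid rounding, lattice basis reduction, a flatness dichotomy, and recursion on hyperplane slices --- is exactly the route the paper takes. The gap is concentrated in the one step you treat as routine: the separation oracle. First, your proposed cut $\nabla g(\mathbf x_0)^\top(\mathbf x-\mathbf x_0)\le 0$ is vacuous whenever $\nabla g(\mathbf x_0)=\mathbf 0$, and for merely \emph{quasi-convex} polynomials this genuinely happens at infeasible points; this is not a detail to ``verify at the end'' but the crux of the quasi-convex case. The paper's oracle handles it by restricting $g$ to rays, noting that the derivative of the degree-$d$ univariate restriction has at most $d-1$ roots, and therefore testing $d$ points per direction (the $\lambda_{i1},\dots,\lambda_{id}$ in the Shallow separation oracle lemma) to guarantee a test point with nonvanishing gradient --- or else certifying that $g$ is constant and $F=\emptyset$. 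This is also where the factor $d^{\mathrm O(n)}$ in the running time actually comes from. Second, an ordinary separation oracle does not deliver the rounding $\mathcal E\subseteq F'\subseteq\gamma(n)\,\mathcal E$ you assert; for that you need a \emph{shallow} separation oracle feeding the shallow-cut ellipsoid method, which is why the paper's test points are pinned into the band $\tfrac1{n+3/2}<\lambda_{ij}<\tfrac1{n+1}$, so that either the cross-polytope $\conv\{\mathbf x_{i,1}\}$ certifies the inscribed ball $\mathcal E((n+1)^{-3}A,\mathbf{\hat x})\subseteq F$ or the returned cut is deep enough to satisfy~\eqref{eq:shallow-cut}.

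A smaller but still real omission: the ellipsoid method terminates either with a rounding or with a circumscribed ellipsoid of volume below some $\epsilon$, and to conclude infeasibility in the latter case you need the paper's lower bound on the volume of a feasible region that contains an integer point (of encoding size $\ell(dn)^{\mathrm O(1)}$). Without that lemma your feasibility test cannot soundly answer ``no.'' By contrast, your last paragraph locates the main difficulty in the flatness bookkeeping; that part is comparatively standard Lenstra machinery, and the paper spends essentially all of its effort on the oracle and the volume bound instead.
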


A complexity result of greater generality was presented by
Khachiyan and Porkolab~\cite{khachiyan-porkolab:00}.  It covers the case of
minimization of convex polynomials over the integer points in convex
semialgebraic sets given by \emph{arbitrary} (not necessarily quasi-convex)
polynomials.
\begin{theorem}\label{th:khachiyan-porkolab-complexity}
  Let $Y\subseteq\R^k$ be a convex set given by
  \begin{displaymath}
    Y = \bigl\{\, \mathbf y\in\R^k :
    \mathrm{Q}_1\mathbf x^1\in\R^{n_1}\colon \cdots\ \mathrm{Q}_\omega\mathbf
    x^\omega\in\R^{n_\omega}\colon
    P(\mathbf y, \mathbf x^1,\dots,\mathbf x^\omega) \,\bigr\}
  \end{displaymath}
  with quantifiers $\mathrm Q_i\in\{\exists,\forall\}$, where $P$ is a Boolean
  combination of polynomial inequalities
  \begin{displaymath}
    g_i(\mathbf y, \mathbf x^1,\dots,\mathbf x^\omega) \leq 0,\quad i=1,\dots,m
  \end{displaymath}
  with degrees at most~$d\geq2$ and coefficients of binary encoding size at
  most~$\ell$.  There exists an algorithm for solving the problem
  \begin{math}
    \min \{\, y_k : \mathbf y \in Y\cap\Z^k \,\}
  \end{math}
  in time $\ell^{\mathrm{O}(1)} (md)^{\mathrm{O}(k^4) \prod_{i=1}^\omega \mathrm{O}(n_i)}$.
\end{theorem}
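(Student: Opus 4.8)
The plan is to follow the Lenstra-type paradigm of \cite{Lenstra83}, but with the linear-programming and separation primitives replaced by calls to an effective decision procedure for the first-order theory of the reals, as in \cite{Renegar:1992:CCGc,Renegar:1992:Approximating}. The whole argument rests on the hypothesis that $Y$ is \emph{convex}, so that --- exactly as in Theorem~\ref{th:encoding-length-convex-optimum} --- feasibility can be localized and the flatness machinery applies.

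First I would reduce the optimization to a bounded feasibility problem. Since $Y$ is convex and the objective is the single coordinate $y_k$, I would establish an effective a-priori bound: if $\min\{y_k:\mathbf y\in Y\cap\Z^k\}$ is finite, there is an optimal integer point inside a ball of radius $R$ whose binary size is polynomial in $\ell$ and governed in the exponent by $md$ and by $\prod_i O(n_i)$. Such a bound is the convex-semialgebraic analogue of Theorem~\ref{th:encoding-length-convex-optimum}, obtained from the same quantifier-elimination estimates, and this is exactly where convexity is essential; the same computation certifies infeasibility or unboundedness when no finite $R$ exists. Intersecting $Y$ with $\|\mathbf y\|\le R$ gives a bounded convex semialgebraic body $Y'$, and by bisection on the value $\tau$ of $y_k$ it suffices to decide, for $O(\log R)$ rational thresholds $\tau$, whether $Y'_\tau:=Y'\cap\{y_k\le\tau\}$ contains an integer point, and to exhibit one if so.

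The core is therefore an integer-feasibility algorithm for a bounded convex semialgebraic set $C\subseteq\R^k$ presented by a prenex formula with quantifier blocks of sizes $n_1,\dots,n_\omega$. I would realize a \emph{membership/separation oracle} for $C$ through the decision procedure: to test $\mathbf c\in C$ one decides the sentence obtained by instantiating the free variables at $\mathbf c$, and to approximate the support function $\max\{\mathbf c^\top\mathbf y:\mathbf y\in C\}$ (hence the width of $C$ in a direction $\mathbf c$) one bisects on $t$ using sentences $\exists\mathbf y\in C:\mathbf c^\top\mathbf y\ge t$. Each such call costs $\ell^{O(1)}(md)^{\prod_i O(n_i)}$ by Renegar's bounds, and convexity guarantees these oracles behave as the ellipsoid method requires. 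Using them I would compute ellipsoids $E\subseteq C\subseteq\gamma(k)\,E$ and then invoke the flatness theorem: either the inner ellipsoid is large enough that $C$ provably contains a lattice point, produced by rounding, or LLL-reduction exposes a lattice direction $\mathbf c$ in which the lattice width of $C$ is at most a bound $f(k)$ depending only on $k$; in the latter case the integer points of $C$ lie on at most $f(k)$ parallel lattice hyperplanes, and on each I would recurse on the $(k-1)$-dimensional slice, which remains convex and keeps the same quantifier structure.

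Finally I would assemble the complexity. The recursion has depth $k$ and branching bounded by the flatness constant, so the number of leaf subproblems is a function of $k$ alone; each node performs a number of ellipsoid steps, LLL reductions, and width computations that is polynomial in $k$, each invoking the real decision procedure once. Multiplying the per-call cost $\ell^{O(1)}(md)^{\prod_i O(n_i)}$ by the number of calls, and tracking the polynomial growth of degrees and coefficient sizes across the $k$ levels, yields $\ell^{O(1)}(md)^{O(k^4)\prod_{i=1}^\omega O(n_i)}$: the \emph{product} $\prod_i O(n_i)$ records the block-by-block cost of quantifier elimination, while the factor $O(k^4)$ records the recursion depth together with the number and the size/degree blow-up of the oracle calls per level. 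The main obstacle, I expect, is precisely this interface: one must run the geometric (ellipsoid, flatness, LLL) machinery using only an \emph{approximate} oracle derived from quantifier elimination, controlling precision and the bit-size of intermediate data well enough that the flatness dichotomy stays valid and the quantifier-elimination calls stay within budget, and one must check that the dimension-reduced slices retain a bounded-complexity convex semialgebraic description so that the recursion closes. Pinning down the exact exponent $O(k^4)$, rather than a cruder polynomial in $k$, is a matter of careful bookkeeping of these costs.
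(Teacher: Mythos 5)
The paper does not prove this statement at all: Theorem~\ref{th:khachiyan-porkolab-complexity} is quoted verbatim from Khachiyan and Porkolab \cite{khachiyan-porkolab:00} as a black-box complexity result, and the only proof machinery the chapter develops in Section~\ref{s:convex-min-fixed-dim} is for the neighbouring Heinz variant (Theorem~\ref{th:heinz-complexity}), where the feasible region is given explicitly by quasi-convex polynomial inequalities and the shallow separation oracle is built by evaluating gradients at test points rather than by invoking a decision procedure for the first-order theory of the reals. So there is nothing in the text to compare your argument against line by line; what can be said is that your sketch is a faithful reconstruction of the actual Khachiyan--Porkolab strategy, and it is consistent with the Lenstra-type template the paper does spell out: an a-priori radius bound in the spirit of Theorem~\ref{th:encoding-length-convex-optimum}, ellipsoidal rounding of the convex body, the flatness/LLL dichotomy, and recursion on lattice hyperplanes, with Renegar's algorithm supplying the membership and width oracles that the explicit-polynomial setting gets for free.

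Two points in your outline deserve flagging as genuinely thin rather than mere bookkeeping. First, you invoke ``the convex-semialgebraic analogue of Theorem~\ref{th:encoding-length-convex-optimum}'' for the radius $R$, but that theorem is stated only for regions cut out by explicit quasi-convex polynomials; for a set defined by an arbitrary quantified Boolean combination you must first run quantifier elimination to obtain a quantifier-free description (this is where the factor $\prod_{i}\mathrm{O}(n_i)$ in the exponent really originates, and it inflates both the number and the degrees of the defining polynomials before any bounding argument can be applied), and convexity of $Y$ has to be used separately since the eliminated description need not be convex constraint by constraint. Second, your accounting treats each call to the real decision procedure as costing $\ell^{\mathrm{O}(1)}(md)^{\prod_i \mathrm{O}(n_i)}$ independently of $k$, but the free variables $\mathbf y$ form an additional block, so each call already costs $(md)^{\mathrm{O}(k)\prod_i \mathrm{O}(n_i)}$; the exponent $\mathrm{O}(k^4)$ then has to absorb this per-call dependence together with the depth-$k$ recursion and the degree growth across levels. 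Neither issue breaks the argument --- both are resolved in \cite{khachiyan-porkolab:00} --- but as written your complexity assembly would not yield the stated exponent without them.
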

When the dimension~$k+\sum_{i=1}^\omega n_i$ is fixed, the algorithm runs in
polynomial time.  For the case of convex minimization where the feasible region is described by
convex polynomials, the complexity bound of
Theorem \ref{th:khachiyan-porkolab-complexity}, however, translates to $\ell^{\mathrm{O}(1)}
m^{\mathrm{O}(n^2)} d^{\mathrm{O}(n^4)}$, which is worse than the bound of
Theorem \ref{th:heinz-complexity}
\cite{heinz-2005:integer-quasiconvex}.\medbreak

In the remainder of this subsection, we describe the ingredients of the
variant of Lenstra's algorithm due to Heinz.  The algorithm starts out by
``rounding'' the feasible region, by applying the shallow-cut ellipsoid method
to find proportional inscribed and circumscribed ellipsoids.  It is well-known
\cite{GroetschelLovaszSchrijver88} that the shallow-cut ellipsoid method only
needs an initial circumscribed ellipsoid that is ``small enough'' (of
polynomial binary encoding size -- this follows from
Theorem \ref{th:encoding-length-convex-optimum}) and an implementation of a
\emph{shallow separation oracle}, which we describe below.


For a positive-definite matrix~$A$ we denote by $\mathcal{E}(A,\mathbf{\hat x})$ the ellipsoid $\{\, \mathbf
x\in\R^n : {(\mathbf x - \mathbf{\hat x})}^\top A (\mathbf x - \mathbf{\hat x}) \leq 1 \,\}$.

\begin{lemma}[Shallow separation oracle]
  Let $g_0,\dots,g_{m+1}\in\Z[\mathbf x]$ be quasi-convex polynomials of degree at
  most~$d$, the binary encoding sizes of whose coefficients are at most~$r$.
  Let the (continuous) feasible region~$F = \{\,\mathbf x\in\R^n :
  g_i(\mathbf x) < 0\,\}$ be contained in the ellipsoid~$\mathcal{E}(A,\mathbf{\hat x})$,
  where $A$ and $\mathbf{\hat x}$ have binary encoding size at most~$\ell$.
  There exists an algorithm with running time $m (lnr)^{\mathrm{O}(1)}
  d^{\mathrm{O}(n)}$ that outputs
  \begin{enumerate}[\rm(a)]
  \item ``true'' if
    \begin{equation}\label{eq:tough-ellipsoid}
      \mathcal{E}((n+1)^{-3} A, \mathbf{\hat x}) \subseteq F \subseteq \mathcal{E}(A, \mathbf{\hat
        x});
    \end{equation}
  \item otherwise, a vector $\mathbf c\in\Q^n\setminus\{\mathbf0\}$ of binary encoding
    length $(l+r) (dn)^{\mathrm{O}(1)}$ with
    \begin{equation}\label{eq:shallow-cut}
      F \subseteq \mathcal{E}(A, \mathbf{\hat x}) \cap
      \bigl\{\, \mathbf x \in\R^n : \mathbf c^\top (\mathbf x - \mathbf{\hat x}) \leq \tfrac1{n+1}
      (\mathbf c^\top A\mathbf c)^{1/2} \,\bigr\}.
    \end{equation}
  \end{enumerate}
\end{lemma}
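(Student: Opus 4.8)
The plan is to treat the two inclusions in \eqref{eq:tough-ellipsoid} separately. The right-hand inclusion $F\subseteq\mathcal{E}(A,\hat{\mathbf x})$ is part of the hypothesis, so the oracle only has to decide the left-hand inclusion $\mathcal{E}((n+1)^{-3}A,\hat{\mathbf x})\subseteq F$ and, when it fails, manufacture the shallow cut \eqref{eq:shallow-cut}. First I would apply the affine change of coordinates that carries $\mathcal{E}(A,\hat{\mathbf x})$ to the unit ball $B(\mathbf 0,1)$; it carries the inner ellipsoid to the concentric ball $B(\mathbf 0,\rho)$ with $\rho=(n+1)^{-3/2}$ and each $g_i$ to a polynomial $\tilde g_i$ that is again quasi-convex of degree at most $d$, with coefficient encoding length still bounded polynomially in $\ell,r,n$. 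Thus it suffices to decide, for every $i$, whether $\tilde g_i<0$ on the closed ball $B(\mathbf 0,\rho)$.

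Because each $\tilde g_i$ is quasi-convex, its maximum over the convex body $B(\mathbf 0,\rho)$ is attained on the bounding sphere, so the decision reduces to computing $M_i:=\max\{\tilde g_i(\mathbf x):\|\mathbf x\|=\rho\}$ together with a maximizer $\mathbf y_i$. I would obtain these by solving the Lagrange system $\nabla\tilde g_i(\mathbf x)=2\lambda\mathbf x$, $\|\mathbf x\|^2=\rho^2$ --- a system of $n+1$ polynomial equations of degree $O(d)$ --- with a critical-point/real-root enumeration routine, evaluating $\tilde g_i$ at the finitely many real solutions and keeping the largest. The number of solutions is $d^{O(n)}$ and the enumeration costs $d^{O(n)}$ arithmetic operations times a factor polynomial in $\ell,n,r$, which is exactly the budget $m(\ell n r)^{O(1)}d^{O(n)}$ once we loop over the $m+2$ constraints. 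If every $M_i<0$, then the inner ellipsoid lies in $F$ and the oracle returns ``true''; otherwise fix an index $i_0$ with $M_{i_0}\ge0$ and let $\mathbf y$ (in the original coordinates) be the corresponding point of the closed inner ellipsoid, so $g_{i_0}(\mathbf y)\ge0$ while every point of $F$ satisfies $g_{i_0}<0$.

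To produce the cut I would take $\mathbf c:=\nabla g_{i_0}(\mathbf y)$ and invoke the first-order characterization of quasi-convexity: since $g_{i_0}(\mathbf x)<0\le g_{i_0}(\mathbf y)$ for all $\mathbf x\in F$, quasi-convexity gives $\mathbf c^\top(\mathbf x-\mathbf y)\le0$, i.e.\ $F$ lies in the half-space $\{\mathbf c^\top(\mathbf x-\hat{\mathbf x})\le\mathbf c^\top(\mathbf y-\hat{\mathbf x})\}$. Because $\mathbf y$ belongs to $\mathcal{E}((n+1)^{-3}A,\hat{\mathbf x})$ its ellipsoidal distance from the center satisfies $\|\mathbf y-\hat{\mathbf x}\|_A\le(n+1)^{-3/2}$, so a Cauchy--Schwarz estimate in the ellipsoidal norm bounds $\mathbf c^\top(\mathbf y-\hat{\mathbf x})$ by $(n+1)^{-3/2}(\mathbf c^\top A\mathbf c)^{1/2}\le\tfrac1{n+1}(\mathbf c^\top A\mathbf c)^{1/2}$. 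This is precisely the shallow cut \eqref{eq:shallow-cut}; note that the exponent $3$ in the inner scaling is chosen exactly so that a gradient cut taken anywhere in the inner ellipsoid has depth at most $\tfrac1{n+1}$.

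The main obstacle is the bit-complexity bookkeeping hidden in the second step: the maximizer $\mathbf y$ is in general an algebraic point, and one must bound the degree and height of its coordinates and of the resulting normal $\mathbf c=\nabla g_{i_0}(\mathbf y)$ so as to guarantee the stated encoding length $(l+r)(dn)^{O(1)}$, and then round $\mathbf y$ and $\mathbf c$ to rational data of that size while keeping the cut valid for all of $F$. This is where a quantitative real-algebraic-geometry engine (in the spirit of Renegar's bounds) does the work, and it is also where one must dispose of two degeneracies: the case $\mathbf c=\mathbf 0$ at the maximizer, and the gap between the open region $F=\{g_i<0\}$ and the closed inner ellipsoid on which the maximum is computed. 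Both are handled by an infinitesimal perturbation of the level or the direction, but verifying that the perturbed cut remains valid and of the right size is the delicate part of the argument.
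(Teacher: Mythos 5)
Your route is genuinely different from the paper's, and its geometric core is sound: if a maximizer $\mathbf y$ of $g_{i_0}$ over the closed inner ellipsoid has $g_{i_0}(\mathbf y)\ge 0$ and $\mathbf c=\nabla g_{i_0}(\mathbf y)\neq\mathbf 0$, then quasi-convexity does give $F\subseteq\{\mathbf x: \mathbf c^\top(\mathbf x-\mathbf y)\le 0\}$, and your Cauchy--Schwarz estimate with shrink factor $(n+1)^{-3/2}\le\tfrac1{n+1}$ yields exactly \eqref{eq:shallow-cut}. But the two issues you defer to ``the delicate part'' are precisely where the argument breaks, and neither is repaired by an infinitesimal perturbation. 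First, the encoding length: your $\mathbf y$ is an algebraic point whose degree over $\Q$ can be $d^{\Theta(n)}$, so $\mathbf c=\nabla g_{i_0}(\mathbf y)$ generically needs $d^{\Theta(n)}$ bits even to isolate, while the lemma demands a rational $\mathbf c$ of encoding length $(\ell+r)(dn)^{O(1)}$; rounding $\mathbf c$ to that precision destroys the only property you have (that $F$ lies in the half-space through $\mathbf y$ with normal exactly $\nabla g_{i_0}(\mathbf y)$), and rounding $\mathbf y$ instead may move you to a point where $g_{i_0}<0$, so the sublevel-set argument no longer covers all of $F$. Second, the degenerate case $\nabla g_{i_0}(\mathbf y)=\mathbf 0$ (for instance $g_{i_0}$ constant, or a critical maximizer) leaves you with no cut at all, and no canonical perturbation provably restores one.

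The paper avoids both problems by never computing a maximizer. After normalizing $\mathcal{E}(A,\hat{\mathbf x})$ to the unit ball it evaluates the $g_i$ and their gradients only at $O(nd)$ explicitly constructed rational test points $\lambda_{ij}\mathbf e_i$ (and their negatives) with $\tfrac1{n+3/2}<\lambda_{ij}<\tfrac1{n+1}$. If the $2n$ innermost points are all feasible, their cross-polytope, which contains the ball of radius $(n+1)^{-3/2}$, lies in $F$ and the oracle answers ``true''; otherwise convexity of the sublevel sets plus a univariate root-counting argument guarantees that the gradient of the violated $g_0$ is nonzero at one of the $nd$ rational test points on some axis (or else $g_0$ is constant and $F=\emptyset$, so any vector works), and that gradient is the cut normal --- automatically of the required bit size because the test points are small rationals, and automatically deep enough because $\lambda_{ij}<\tfrac1{n+1}$. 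Note also that the oracle is not required to decide the inclusion \eqref{eq:tough-ellipsoid} exactly (it may legitimately return a cut even when (a) holds), so the exact optimization over the sphere that drives your complexity analysis is more than is needed. To salvage your approach you would have to replace the exact maximizer by a rational witness of small height at which $g_{i_0}$ is provably nonnegative, or at which the gradient is provably nonzero and the cut provably valid --- which is essentially what the paper's test-point construction delivers.
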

\begin{proof}
  We give a simplified sketch of the proof, without hard complexity estimates.
  By applying an affine transformation to $F\subseteq\mathcal{E}(A,\mathbf{\hat x})$, we
  can assume that $F$ is contained in the unit ball~$\mathcal{E}(I,\mathbf0)$.  Let us
  denote as usual
  by $\mathbf e_1,\dots,\mathbf e_n$ the unit vectors and by $\mathbf e_{n+1},\dots,\mathbf
  e_{2n}$ their negatives.  The algorithm first constructs numbers
  $\lambda_{i1}, \dots, \lambda_{id} > 0$ with
  \begin{equation}\label{eq:lambda-bounds}
    \frac{1}{n+\frac32} < \lambda_{i1} < \dots < \lambda_{id} < \frac1{n+1}
  \end{equation}
  and the corresponding point sets
  \begin{math}
    B_i = \{\, \mathbf x_{ij} := \lambda_{ij} \mathbf
    e_i : j=1,\dots,d\,\};
  \end{math}
  see Figure \ref{fig:shallowcut-0}\,(a).  The choice of the bounds
  \eqref{eq:lambda-bounds} for~$\lambda_{ij}$ will ensure that we either find
  a large enough inscribed ball for~(a) or a deep enough cut for~(b).
  \begin{figure}[t]
    \centering
    (a)\hspace{-.5cm}
    \ifpdf
    \input{shallowcut-0.pdf_t}
    \else
    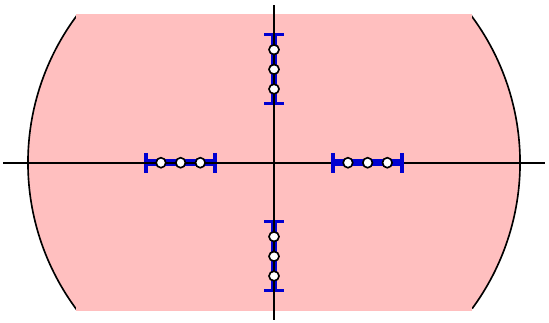
    \fi
    \quad(b)\hspace{-.5cm}
    \ifpdf
    \input{shallowcut.pdf_t}
    \else
    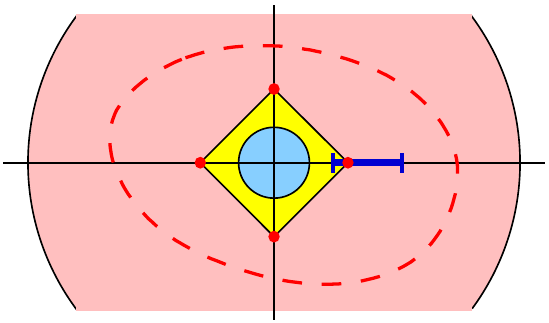
    \fi
    \caption{The implementation of the shallow separation oracle.
      \textbf{(a)}~Test points $\mathbf x_{ij}$ in the circumscribed ball~$\mathcal{E}(1,\mathbf0)$.
      \textbf{(b)}~Case~I: All
      test points $\mathbf x_{i1}$ are (continuously) feasible; so their convex hull
      (a~cross-polytope) and its inscribed ball $\mathcal{E}((n+1)^{-3},\mathbf0)$ are
      contained in the (continuous) feasible region~$F$.
    }
    \label{fig:shallowcut-0}
    \label{fig:shallowcut}
  \end{figure}%
  Then the algorithm determines the (continuous) feasibility
  of the center~$\mathbf0$ and the $2n$ innermost points $\mathbf x_{i,1}$.\smallbreak

  \emph{Case~I.}
  If $\mathbf x_{i,1}\in F$ for $i=1,\dots, 2n$, then the cross-polytope
  $\mathop{\mathrm{conv}}\{\,\mathbf x_{i,1} : i = 1,\dots,2n\,\}$ is contained in~$F$; see
  Figure \ref{fig:shallowcut}\,(b).
  An easy
  calculation shows that the ball $\mathcal{E}((n+1)^{-3},\mathbf0)$ is contained in the
  cross-polytope and thus in~$F$; see Figure \ref{fig:shallowcut}.  Hence the
  condition in~(a) is satisfied and the algorithm outputs ``true''.\smallbreak

  \emph{Case~II.}  We now discuss the case when the center~$\mathbf 0$ violates a
  polynomial inequality $g_0(\mathbf x)<0$ (say).  Let $F_0 = \{\, \mathbf
  x\in\R^n: g_0(\mathbf x)<0\,\}\supseteq F$.  Due to convexity of~$F_0$, for all
  $i=1,\dots,n$, one set of each pair $B_i\cap F_0$ and $B_{n+i}\cap F_0$ must be
  empty; see~Figure~\ref{fig:shallowcut-2b}\,(a).  Without loss of generality, let us
  assume $B_{n+i}\cap F_0=\emptyset$ for all~$i$.
  \begin{figure}[t]
    \centering
    (a)\hspace{-.5cm}
    \ifpdf
    \input{shallowcut-2b.pdf_t}
    \else
    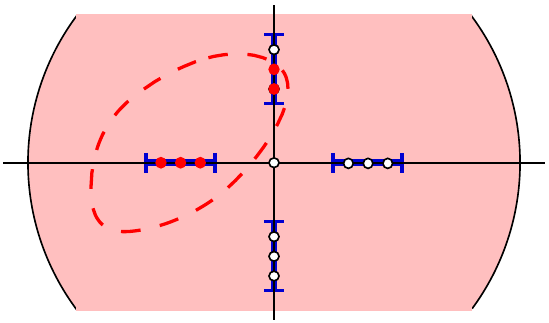
    \fi
    \quad(b)\hspace{-.5cm}
    \ifpdf
    \input{shallowcut-2a.pdf_t}
    \else
    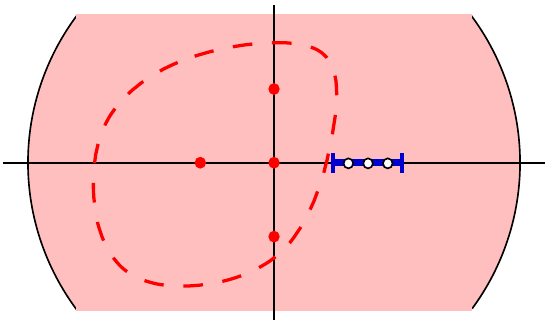
    \fi
    \caption{The implementation of the shallow separation oracle.  \textbf{(a)}~Case~II:
      The center~$\mathbf 0$ violates a polynomial inequality $g_0(\mathbf x)<0$
      (say).  Due to convexity, for all $i=1,\dots,n$, one set of each pair
      $B_i\cap F$ and $B_{n+i}\cap F$ must be empty.
      \textbf{(b)}~Case~III: A
      test point $\mathbf x_{k1}$ is infeasible, as it violates an
      inequality $g_0(\mathbf x)<0$ (say).  However, the center~$\mathbf 0$ is
      feasible at least for this inequality.}
    \label{fig:shallowcut-2a}
    \label{fig:shallowcut-2b}
  \end{figure}%
  We can determine whether a $n$-variate polynomial function of known maximum
  degree~$d$ is constant by evaluating it on $(d+1)^n$ suitable points (this
  is a consequence of the Fundamental Theorem of Algebra).  For our case of
  quasi-convex polynomials, this can be improved; indeed, it suffices to test
  whether the gradient $\nabla g_0$ vanishes on the $nd$ points in the set
  $B_1\cup\dots\cup B_{n}$.  If it does, we know that~$g_0$ is constant, thus
  $F=\emptyset$, and so can we return an arbitrary vector~$\mathbf c$.  Otherwise,
  there is a point $\mathbf x_{ij}\in B_i$ with $\mathbf c:= \nabla g_0(\mathbf x_{ij})\neq\mathbf0$;
  we return this vector as the desired normal vector of a shallow cut.
  Due to the choice of $\lambda_{ij}$ as a number smaller than~$\frac1{n+1}$,
  the cut is deep enough into the ellipsoid~$\mathcal{E}(A,\mathbf{\hat x})$,
  so that~\eqref{eq:shallow-cut} holds.
  \smallbreak

  \emph{Case~III.}  The remaining case to discuss is when $\mathbf0 \in F$ but
  there exists a $k \in\{1,\dots,2n\}$ with $\mathbf x_{k,1}\notin F$.  Without
  loss of generality, let $k=1$, and let $\mathbf x_{1,1}$ violate the polynomial
  inequality $g_0(\mathbf x)<0$, i.e., $g_0(\mathbf x_{1,1})\geq0$; see
  Figure \ref{fig:shallowcut-2a}\,(b).
  We consider the univariate polynomial
  $\phi(\lambda) = g_0(\lambda \mathbf e_i)$.  We have $\phi(0) = g_0(\mathbf0)<0$
  and $\phi(\lambda_{1,1}) \geq 0$, so $\phi$ is not constant.  Because $\phi$ has
  degree at most $d$, its derivative~$\phi'$ has degree at most~$d-1$, so
  $\phi'$ has at most $d-1$ roots.  Thus, for at least one of the $d$ different values
  $\lambda_{1,1}, \dots, \lambda_{1,d}$, say $\lambda_{1,j}$, we must have
  $\phi'(\lambda_{1,j})\neq0$.  This implies that $\mathbf c := \nabla g_0(\mathbf
  x_{1,j}) \neq\mathbf0$.  By convexity, we have $\mathbf x_{1,j}\notin F$, so we can
  use $\mathbf c$ as the normal vector of a shallow cut.
\end{proof}

By using this oracle in the shallow-cut ellipsoid method, one obtains
the following result.
\begin{corollary}\label{th:shallow-cut-ellipsoid-method}
  Let $g_0,\dots,g_{m}\in\Z[\mathbf x]$ be quasi-convex polynomials of degree at
  most~$d\geq2$.  Let the (continuous) feasible region~$F = \{\,\mathbf x\in\R^n :
  g_i(\mathbf x) \leq 0\,\}$ be contained in the ellipsoid~$\mathcal{E}(A_0,\mathbf0)$, given
  by the positive-definite matrix~$A_0\in\Q^{n\times n}$.  Let
  $\epsilon\in\Q_{>0}$ be given.  Let the entries of $A_0$ and the
  coefficients of all monomials of $g_0,\dots,g_{m}$ have binary encoding size
  at most~$\ell$.

  There exists an algorithm with running time $m (\ell
  n)^{\mathrm{O}(1)} d^{\mathrm{O}(n)}$ that computes a positive-definite
  matrix~$A\in\Q^{n\times n}$ and a point~$\mathbf{\hat x}\in\Q^n$ with
  \begin{enumerate}[\rm(a)]
  \item either $\mathcal{E}((n+1)^{-3} A, \mathbf{\hat x}) \subseteq F \subseteq \mathcal{E}(A, \mathbf{\hat x})$
  \item or $F \subseteq \mathcal{E}(A, \mathbf{\hat x})$ and $\mathop{\mathrm{vol}} \mathcal{E}(A, \mathbf{\hat x})
    <\epsilon$.
  \end{enumerate}
\end{corollary}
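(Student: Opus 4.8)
The plan is to run the shallow-cut ellipsoid method of Grötschel, Lovász and Schrijver~\cite{GroetschelLovaszSchrijver88}, using the algorithm of the preceding shallow separation oracle as its separation subroutine. The matching between the two is exact and deliberate: the oracle either certifies the rounding condition~\eqref{eq:tough-ellipsoid}, which is precisely the ``well-rounded'' predicate that the shallow-cut method expects, or it returns a normal vector $\mathbf c$ whose associated cut~\eqref{eq:shallow-cut} has depth $\tfrac1{n+1}$ into $\mathcal{E}(A,\mathbf{\hat x})$, which is exactly the shallow depth for which the method guarantees a volume decrease. First I would initialize $A := A_0$ and $\mathbf{\hat x} := \mathbf0$; by hypothesis $F\subseteq\mathcal{E}(A_0,\mathbf0)$, so the loop invariant $F\subseteq\mathcal{E}(A,\mathbf{\hat x})$ holds at the outset.

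Each iteration proceeds as follows. First test whether $\mathop{\mathrm{vol}}\mathcal{E}(A,\mathbf{\hat x})<\epsilon$; if so, output $(A,\mathbf{\hat x})$ and halt in case~(b), which is valid by the invariant. Otherwise call the oracle on $\mathcal{E}(A,\mathbf{\hat x})$. If it returns ``true'', then $\mathcal{E}((n+1)^{-3}A,\mathbf{\hat x})\subseteq F\subseteq\mathcal{E}(A,\mathbf{\hat x})$, so we output $(A,\mathbf{\hat x})$ and halt in case~(a). If instead it returns a shallow cut $\mathbf c$ satisfying~\eqref{eq:shallow-cut}, apply the standard shallow-cut update to pass to the minimum-volume ellipsoid $\mathcal{E}(A',\mathbf{\hat x}')$ containing the half-ellipsoid $\mathcal{E}(A,\mathbf{\hat x})\cap\{\mathbf x:\mathbf c^\top(\mathbf x-\mathbf{\hat x})\le\tfrac1{n+1}(\mathbf c^\top A\mathbf c)^{1/2}\}$. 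By~\eqref{eq:shallow-cut} this half-ellipsoid still contains $F$, so the invariant is preserved, and the depth-$\tfrac1{n+1}$ cut guarantees $\mathop{\mathrm{vol}}\mathcal{E}(A',\mathbf{\hat x}')\le e^{-\Theta(1/n^2)}\mathop{\mathrm{vol}}\mathcal{E}(A,\mathbf{\hat x})$, the standard shallow-cut ratio. Since the initial volume $\mathop{\mathrm{vol}}\mathcal{E}(A_0,\mathbf0)$ has $\log$ polynomially bounded by $\ell$ and $n$, the number of iterations before the volume drops below $\epsilon$ is $N=O\bigl(n^2\log(\mathop{\mathrm{vol}}\mathcal{E}(A_0,\mathbf0)/\epsilon)\bigr)$, which is polynomial in $n$, $\ell$, and the encoding size of $\epsilon$.

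For the running time, the preceding lemma shows that each oracle call costs $m(\ell'nr)^{\mathrm O(1)}d^{\mathrm O(n)}$, where $r=\ell$ bounds the coefficient sizes of the $g_i$ and $\ell'$ bounds the current encoding size of $(A,\mathbf{\hat x})$. The main obstacle, and the one technical point that must be handled with care, is showing that $\ell'$ stays polynomially bounded in $n$, $\ell$, $\log(1/\epsilon)$ across all $N$ iterations: the exact ellipsoid update involves square roots and would otherwise blow up the bit-sizes. This is resolved, as in~\cite{GroetschelLovaszSchrijver88}, by rounding $A'$ and $\mathbf{\hat x}'$ to polynomially many bits at each step while slightly inflating the ellipsoid so that the containment $F\subseteq\mathcal{E}(A',\mathbf{\hat x}')$ and the geometric volume decrease both survive the rounding. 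Invoking this standard analysis keeps each $\ell'$ polynomial and hence each oracle call within $m(\ell n)^{\mathrm O(1)}d^{\mathrm O(n)}$; multiplying by the polynomial iteration bound $N$ leaves the total running time at $m(\ell n)^{\mathrm O(1)}d^{\mathrm O(n)}$, as claimed.
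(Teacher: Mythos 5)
Your proposal is correct and follows exactly the route the paper intends: the corollary is obtained by plugging the shallow separation oracle of the preceding lemma into the standard shallow-cut ellipsoid method of Gr\"otschel, Lov\'asz and Schrijver, with the usual volume-decrease iteration count and the usual rounding of $(A,\mathbf{\hat x})$ to control bit-sizes. The paper gives no further detail beyond this one-line invocation, so your write-up simply makes the standard argument explicit.
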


Finally, there is a lower bound for the volume of a continuous feasible
region~$F$ that can contain an integer point.

\begin{lemma}
  Under the assumptions of Theorem \ref{th:shallow-cut-ellipsoid-method},
  if $F\cap\Z^n\neq \emptyset$, then there exists an $\epsilon\in\Q_{>0}$ of binary
  encoding size $\ell (dn)^{\mathrm{O}(1)}$ with $\mathop{\mathrm{vol}} F>\epsilon$.
\end{lemma}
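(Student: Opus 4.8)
The plan is to produce, around a feasible integer point, an explicit Euclidean ball that lies entirely in $F$, and to bound its radius --- hence $\mathop{\mathrm{vol}} F$ --- from below by a rational of controlled encoding size. First I would fix any $\mathbf z\in F\cap\Z^n$. The decisive observation is an \emph{integrality gap}: since every $g_i$ has integer coefficients and $\mathbf z\in\Z^n$, each value $g_i(\mathbf z)$ is an integer, and strict feasibility ($g_i(\mathbf z)<0$, as in the region handled by the shallow separation oracle) therefore sharpens to $g_i(\mathbf z)\le -1$. This unit of slack is exactly what lets me inflate the single point $\mathbf z$ into a full-dimensional feasible ball; for a point lying on a boundary surface $g_i=0$ no such inflation is possible, which is why one must work with the strictly feasible region here.

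Next I would turn the gap into a radius by controlling how fast the $g_i$ can grow away from $\mathbf z$. Because $F\subseteq\mathcal{E}(A_0,\mathbf0)=\{\mathbf x:\mathbf x^\top A_0\mathbf x\le1\}$ under the hypotheses of Corollary~\ref{th:shallow-cut-ellipsoid-method}, and $A_0$ is positive definite with entries of encoding size at most~$\ell$, a lower bound on $\lambda_{\min}(A_0)$ (obtained from $\det A_0>0$ together with the entrywise size bound) yields $\|\mathbf z\|\le R_0$ with $R_0$ of encoding size polynomial in $\ell$ and $n$. On the ball $B(\mathbf z,1)$ one then has $\|\mathbf x\|_\infty\le R_0+1$, so differentiating $g_i=\sum_{|\alpha|\le d}c_{i,\alpha}\mathbf x^\alpha$ termwise and using $|c_{i,\alpha}|\le 2^\ell$ gives a uniform gradient bound $\|\nabla g_i(\mathbf x)\|\le M$ on $B(\mathbf z,1)$, where $M$ is a product of the number of monomials $\binom{n+d}{d}$, the factor $2^\ell d$, and $(R_0+1)^{d-1}$, and so has encoding size $\ell(dn)^{\mathrm O(1)}$. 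Taking $\rho:=\min\{1,1/(2M)\}$, the mean-value inequality gives $g_i(\mathbf x)\le g_i(\mathbf z)+M\rho\le -1+\tfrac12<0$ for every $\mathbf x\in B(\mathbf z,\rho)$ and every $i$, so $B(\mathbf z,\rho)\subseteq F$.

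Finally I would read off the volume bound. To keep $\epsilon$ rational I would inscribe in $B(\mathbf z,\rho)$ an axis-parallel cube of side $2\rho/\sqrt n$ and set $\epsilon$ to be a rational just below its volume $(2\rho/\sqrt n)^n=2^n\rho^n n^{-n/2}$; then $\mathop{\mathrm{vol}} F\ge\mathop{\mathrm{vol}} B(\mathbf z,\rho)>\epsilon$. The encoding size of $\epsilon$ is at most $n$ times that of $\rho$ plus an $\mathrm O(n\log n)$ term, again $\ell(dn)^{\mathrm O(1)}$. I expect the main obstacle to be precisely the encoding-size bookkeeping of the second paragraph: one must verify that raising the a~priori bound $R_0$ to the power $d-1$, and then the whole radius to the power $n$, keeps the exponent polynomial in $d$ and $n$ while leaving the dependence on $\ell$ linear, so that $\rho$ --- and with it $\epsilon$ --- genuinely has encoding size $\ell(dn)^{\mathrm O(1)}$ rather than merely being positive.
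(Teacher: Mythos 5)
The paper does not actually prove this lemma --- it is stated without proof and imported from Heinz \cite{heinz-2005:integer-quasiconvex} --- so there is no in-paper argument to compare against; your proof is correct and is the standard one: the integrality gap $g_i(\mathbf z)\le -1$ at an integer point of the strictly feasible region, an a priori bound on $\|\mathbf z\|$ from the circumscribed ellipsoid $\mathcal{E}(A_0,\mathbf 0)$, and a termwise gradient bound on the $g_i$ combine to inscribe a ball of radius $\rho=\min\{1,1/(2M)\}$ in $F$, and all quantities stay of encoding size $\ell(dn)^{\mathrm O(1)}$ since only the exponents $d-1$ and $n$ multiply the size of $R_0$ and $\rho$. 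You also correctly identified the one genuinely delicate point: for the non-strict region $\{\mathbf x: g_i(\mathbf x)\le 0\}$ the claim is false (e.g., $g_0=\sum_j x_j^2$ gives $F=\{\mathbf 0\}$), so the lemma must be read for the strictly feasible region used by the shallow separation oracle, exactly as you do.
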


On the basis of these results, one obtains a Lenstra-type algorithm for the
decision version of the convex integer minimization problem with the desired
complexity.  By applying binary search, the optimization problem can be
solved, which provides a proof of Theorem \ref{th:heinz-complexity}.

\subsection{Boundary cases of complexity}
\label{s:n-fold-min}

In this section we present an optimality certificate for problems of
the form
\[
\min\{f(\mathbf x):A\mathbf x=\mathbf b,\mathbf l\leq \mathbf x\leq \mathbf u,\mathbf x\in\Z^n\},
\]
where $A\in\Z^{d\times n}$, $\mathbf b\in\Z^d$, $\mathbf l,\mathbf u\in\Z^n$, and where
$f:\R^n\rightarrow\R$ is a separable convex function, that is,
$f(\mathbf x)=\sum\limits_{i=1}^n f_i(x_i)$ with convex functions
$f_i:\R\rightarrow\R$, $i=1,\ldots,n$. This certificate then
immediately leads us to a oracle-polynomial time algorithm to solve
the separable convex integer minimization problem at hand. Applied to separable convex $n$-fold integer minimization problems, this gives a polynomial time algorithm for their solution \cite{Hemmecke+Onn+Weismantel:08}.

For the construction of the optimality certificate, we exploit a nice
super-additivity property of separable convex functions.

\begin{lemma}\label{Lemma: Superadditivity of separable convex functions}
Let $f:\R^n\rightarrow\R$ be a separable convex function and let
$\mathbf h_1,\ldots \mathbf h_k\in\R^n$ belong to a common orthant of $\R^n$, that is,
they all have the same sign pattern from $\{\geq 0, \leq 0\}^n$. Then,
for any $\mathbf x\in\R^n$ we have
\[
f\left(\mathbf x+\sum_{i=1}^k \mathbf h_i\right)-f(\mathbf x)\geq \sum_{i=1}^k [f(\mathbf x+\mathbf h_i)-f(\mathbf x)].
\]
\end{lemma}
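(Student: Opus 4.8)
The plan is to reduce the multivariate statement to the univariate case by exploiting separability, and then to prove the univariate inequality by a clean induction on $k$ that uses only the one-dimensional convexity of each $f_i$. Since $f(\mathbf x)=\sum_{i=1}^n f_i(x_i)$, both sides of the claimed inequality split coordinate-by-coordinate: writing $h_{j,i}$ for the $i$th coordinate of $\mathbf h_j$, the inequality is equivalent to
\[
\sum_{i=1}^n\Bigl[f_i\Bigl(x_i+\sum_{j=1}^k h_{j,i}\Bigr)-f_i(x_i)\Bigr]\;\geq\;\sum_{i=1}^n\sum_{j=1}^k\bigl[f_i(x_i+h_{j,i})-f_i(x_i)\bigr].
\]
Hence it suffices to prove, for each fixed coordinate $i$, the univariate statement: if $t\in\R$ and $a_1,\dots,a_k\in\R$ all have the same sign (all $\geq 0$ or all $\leq 0$), then
\[
f_i\Bigl(t+\sum_{j=1}^k a_j\Bigr)-f_i(t)\;\geq\;\sum_{j=1}^k\bigl[f_i(t+a_j)-f_i(t)\bigr].
\]
This is exactly where the common-orthant hypothesis is used: it guarantees that within each coordinate the scalars $a_j=h_{j,i}$ share a sign, so that the partial sums $t, t+a_1, t+a_1+a_2,\dots$ move monotonically in one direction.

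For the univariate inequality I would argue by induction on $k$, with $k=1$ trivial. The heart of the matter is the two-term superadditivity claim: for scalars $a,b$ of the same sign,
\[
f_i(t+a+b)-f_i(t)\;\geq\;\bigl[f_i(t+a)-f_i(t)\bigr]+\bigl[f_i(t+b)-f_i(t)\bigr],
\]
equivalently $f_i(t+a+b)-f_i(t+a)\geq f_i(t+b)-f_i(t)$. When $a,b\geq 0$ this says the increment of $f_i$ over the interval $[t+a,\,t+a+b]$ is at least its increment over the shifted-back interval $[t,\,t+b]$ of equal length $b$; because $a\geq 0$ the first interval lies to the right of the second, and the monotonicity of difference quotients for a convex function (the slopes $\tfrac{f_i(y+b)-f_i(y)}{b}$ are nondecreasing in $y$) gives the inequality. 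The case $a,b\leq 0$ is symmetric. The inductive step then peels off $\mathbf h_k$: applying the two-term claim with $a=\sum_{j=1}^{k-1}a_j$ and $b=a_k$ (both of the common sign) reduces the $k$-term bound to the $(k-1)$-term bound plus the single extra term, completing the induction.

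The main obstacle is purely in organizing the two-term convexity estimate cleanly, so that both sign cases are handled and the partial sum $\sum_{j<k} a_j$ is correctly seen to share the common sign (which it does, being a sum of like-signed reals). Everything else — the coordinatewise split and the induction bookkeeping — is routine. One caveat worth flagging is that the lemma is stated for $f\colon\R^n\to\R$ with each $f_i$ convex on all of $\R$, so no domain or boundedness issues arise; the difference-quotient monotonicity of a real convex function holds globally, which is all the argument needs.
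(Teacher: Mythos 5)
Your proposal is correct and follows essentially the same route as the paper: reduce to the univariate case via separability, establish the two-term superadditivity $f_i(t+a+b)-f_i(t+a)\geq f_i(t+b)-f_i(t)$ from the monotonicity of difference quotients of a convex function, and induct on $k$. The paper's version is terser (it states only the case $h_2\geq h_1\geq 0$ explicitly), but the decomposition and the key convexity step are identical to yours.
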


\begin{proof}
The claim is easy to show for $n=1$ by induction. If, w.l.o.g.,
$h_2\geq h_1\geq 0$ then convexity of $f$ implies
$[f(x+h_1+h_2)-f(x+h_2)]/h_1\geq [f(x+h_1)-f(x)]/h_1$, and thus
$f(x+h_1+h_2)-f(x)\geq [f(x+h_2)-f(x)]+ [f(x+h_1)-f(x)]$. The claim
for general $n$ then follows from the separability of $f$ by adding
the superadditivity relations of each one-parametric convex summand of $f$.
\end{proof}

A crucial role in the following theorem is again played by the Graver
basis $\mathcal{G}(A)$ of $A$. Let us remind the reader that the Graver
basis $\mathcal{G}(A)$ has a nice representation property due to its
definition: every $\mathbf z\in\ker(A)\cap\Z^n$ can be written as a
sign-compatible nonnegative integer linear combination $\mathbf z=\sum_i
\alpha_i \mathbf g_i$ of Graver basis elements $\mathbf g_i\in\mathcal{G}(A)$. This
followed from the simple observation that $\mathbf z$ has to belong to some
orthant $\mathcal{O}_j$ of $\R^n$ and thus it can be represented as a
sign-compatible nonnegative integer linear combination of elements in
$H_j\subseteq\mathcal{G}(A)$ belonging to this orthant. Note that by the integer Carath\'eodory
property of Hilbert bases, at most $2\cdot\dim(\ker(A))-2$ vectors are
needed in such a representation \cite{Seboe:90}. It is precisely this
simple representation property of $\mathcal{G}(A)$ combined with the
superadditivity of the separable convex function $f$ that turns
$\mathcal{G}(A)$ into an optimality certificate for $\min\{f(\mathbf x):A\mathbf x=\mathbf b,\mathbf l\leq
\mathbf x\leq \mathbf u,\mathbf x\in\Z^n\}$.

\begin{theorem}\label{Theorem: Convex integer minimization}
Let $f:\R^n\rightarrow\R$ be a separable convex function given by a
comparison oracle that when queried on $\mathbf x,\mathbf y\in\Z^n$ decides whether
$f(\mathbf x)<f(\mathbf y)$, $f(\mathbf x)=f(\mathbf y)$, or $f(\mathbf x)>f(\mathbf y)$. Then $\mathbf x_0$ is an optimal
feasible solution to $\min\{f(\mathbf x):A\mathbf x=\mathbf b,\mathbf l\leq \mathbf x\leq \mathbf u,\mathbf x\in\Z^n\}$ if and
only if for all $\mathbf g\in\mathcal{G}(A)$ the vector $\mathbf x_0+\mathbf g$ is not feasible or
$f(\mathbf x_0+\mathbf g)\geq f(\mathbf x_0)$.
\end{theorem}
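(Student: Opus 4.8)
The plan is to prove both directions of the equivalence, with the forward (necessity) direction being essentially trivial and the reverse (sufficiency) direction carrying all the weight. The key tools are the sign-compatible representation property of the Graver basis $\mathcal{G}(A)$ and the superadditivity of separable convex functions established in Lemma~\ref{Lemma: Superadditivity of separable convex functions}.

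First I would dispatch the ``only if'' direction by contraposition: if some $\mathbf g\in\mathcal{G}(A)$ satisfies that $\mathbf x_0+\mathbf g$ is feasible and $f(\mathbf x_0+\mathbf g)<f(\mathbf x_0)$, then $\mathbf x_0+\mathbf g$ is a feasible point with strictly smaller objective value, so $\mathbf x_0$ cannot be optimal. This is immediate and requires no machinery.

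The substance is the ``if'' direction. Assume $\mathbf x_0$ is feasible and satisfies the stated local condition, and let $\mathbf x^*$ be any feasible solution; the goal is to show $f(\mathbf x^*)\geq f(\mathbf x_0)$. The plan is to consider the difference $\mathbf z:=\mathbf x^*-\mathbf x_0$, which lies in $\ker(A)\cap\Z^n$ since both points satisfy $A\mathbf x=\mathbf b$. By the representation property of the Graver basis, I can write $\mathbf z=\sum_{i=1}^k \mathbf g_i$ as a sign-compatible sum of (not necessarily distinct) Graver basis elements $\mathbf g_i\in\mathcal{G}(A)$, where sign-compatibility means all $\mathbf g_i$ lie in the same orthant as $\mathbf z$. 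I would then apply Lemma~\ref{Lemma: Superadditivity of separable convex functions} with $\mathbf x=\mathbf x_0$ and the vectors $\mathbf h_i=\mathbf g_i$, which is legitimate precisely because sign-compatibility guarantees the $\mathbf g_i$ share a common orthant. This yields
\[
f(\mathbf x^*)-f(\mathbf x_0)=f\Big(\mathbf x_0+\sum_{i=1}^k \mathbf g_i\Big)-f(\mathbf x_0)\geq \sum_{i=1}^k\big[f(\mathbf x_0+\mathbf g_i)-f(\mathbf x_0)\big].
\]

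To conclude, I would argue that each summand on the right is nonnegative. The crucial point is that each intermediate vector $\mathbf x_0+\mathbf g_i$ is feasible: it satisfies $A(\mathbf x_0+\mathbf g_i)=\mathbf b$ since $\mathbf g_i\in\ker(A)$, and it respects the bounds $\mathbf l\leq \mathbf x_0+\mathbf g_i\leq \mathbf u$ by a sign-compatibility argument analogous to the one in the proof of Lemma~\ref{GraverEdgeDirections}: because $\mathbf g_i$ is sign-compatible with $\mathbf z=\mathbf x^*-\mathbf x_0$ and $\mathbf x_0+\mathbf z=\mathbf x^*$ lies within $[\mathbf l,\mathbf u]$, each coordinate of $\mathbf x_0+\mathbf g_i$ stays between the corresponding coordinates of $\mathbf x_0$ and $\mathbf x^*$, hence within the box. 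I expect this feasibility verification of the intermediate points to be the main obstacle, since it is the only place requiring genuine care, but it follows the same coordinatewise case analysis on the sign of $(\mathbf g_i)_j$ already carried out in Lemma~\ref{GraverEdgeDirections}. Once feasibility of $\mathbf x_0+\mathbf g_i$ is established, the hypothesis on $\mathbf x_0$ forces $f(\mathbf x_0+\mathbf g_i)\geq f(\mathbf x_0)$ for every $i$, so every summand is nonnegative and therefore $f(\mathbf x^*)\geq f(\mathbf x_0)$, proving optimality of $\mathbf x_0$.
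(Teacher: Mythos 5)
Your proof is correct and follows essentially the same route as the paper's: a sign-compatible Graver decomposition of $\mathbf x^*-\mathbf x_0$, the superadditivity lemma, and the coordinatewise betweenness argument for feasibility of the intermediate points $\mathbf x_0+\mathbf g_i$. The only difference is that you argue the sufficiency direction directly (every summand nonnegative) while the paper argues the contrapositive (some summand strictly negative), which is a purely presentational distinction.
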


\begin{proof}
Assume that $\mathbf x_0$ is not optimal and let $\mathbf x_{\min}$ be an optimal
solution to the given problem. Then $\mathbf x_{\min}-\mathbf x_0\in\ker(A)$ and thus
it can be written as a sign-compatible nonnegative integer linear
combination $\mathbf x_{\min}-\mathbf x_0=\sum_i \alpha_i \mathbf g_i$ of Graver basis
elements $\mathbf g_i\in\mathcal{G}(A)$. We show that one of the $\mathbf g_i$ must be an
improving vector, that is, for some $\mathbf g_i$ we have that $\mathbf x_0+\mathbf g_i$ is
feasible and $f(\mathbf x_0+\mathbf g_i)<f(\mathbf x_0)$.

For all $i$, the vector $\mathbf g_i$ has the same sign-pattern as
$\mathbf x_{\min}-\mathbf x_0$ and it is now easy to check that the coordinates of $\mathbf x_0+\mathbf g_i$ lie between
the corresponding coordinates of $\mathbf x_0$ and $\mathbf x_{\min}$. This implies in particular
$\mathbf l\leq \mathbf x_0+\mathbf g_i\leq \mathbf u$. Because $\mathbf g_i\in\ker(A)$, we also have
$A(\mathbf x_0+\mathbf g_i)=\mathbf b$ for all $i$. Consequently, for all $i$ the vector
$\mathbf x_0+\mathbf g_i$ would be a feasible solution. It remains to show that one of
these vectors has a strictly smaller objective value than $\mathbf x_0$.

Due to the superadditivity from Lemma \ref{Lemma: Superadditivity of
separable convex functions}, we have
\[
0\geq f(\mathbf x_{\min})-f(\mathbf x_0)=
f\left(\mathbf x_0+\sum_{i=1}^{2n-2} \alpha_i \mathbf g_i\right)-f(\mathbf x_0)\geq
\sum_{i=1}^k \alpha_i[f(\mathbf x_0+\mathbf g_i)-f(\mathbf x_0)].
\]
Thus, at least one of the summands $f(\mathbf x_0+\mathbf g_i)-f(\mathbf x_0)$ must be
negative and we have found an improving vector for $\mathbf z_0$ in
$\mathcal{G}(A)$. \end{proof}

We now turn this optimality certificate into a polynomial oracle-time
algorithm to solve the separable convex integer minimization problem
$\min\{f(\mathbf x):A\mathbf x=\mathbf b,\mathbf l\leq \mathbf x\leq \mathbf u,\mathbf x\in\Z^n\}$.
For this, we call $\alpha \mathbf g$ a \emph{greedy} Graver improving
vector if $\mathbf x_0+\alpha \mathbf g$ is feasible and such that $f(\mathbf x_0+\alpha \mathbf g)$
is minimal among all such choices of $\alpha\in\Z_+$ and
$\mathbf g\in\mathcal{G}(A)$. Then the following result holds.

\begin{theorem}\label{Theorem: Main theorem convex integer minimization}
Let $f:\R^n\rightarrow\R$ be a separable convex function given by a
comparison oracle. Moreover, assume that $|f(\mathbf x)|<M$ for all
$\mathbf x\in\{\mathbf x:A\mathbf x=\mathbf b,\mathbf l\leq \mathbf x\leq \mathbf u,\mathbf x\in\Z^n\}$. Then any feasible solution
$\mathbf x_0$ to $\min\{f(\mathbf x):A\mathbf x=\mathbf b,\mathbf l\leq \mathbf x\leq \mathbf u,\mathbf x\in\Z^n\}$ can be augmented
to optimality by a number of greedy Graver augmentation steps that is polynomially bounded in the encoding lengths of
$A$, $\mathbf b$, $\mathbf l$, $\mathbf u$, $M$, and $\mathbf x_0$.
\end{theorem}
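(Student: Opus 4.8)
The plan is to show that a single greedy Graver augmentation step always shrinks the optimality gap by a fixed multiplicative factor, so that the gap contracts geometrically from its initial value (bounded by $2M$) down to zero. Fix an optimal solution $\mathbf x^*$ of $\min\{f(\mathbf x):A\mathbf x=\mathbf b,\mathbf l\leq\mathbf x\leq\mathbf u,\mathbf x\in\Z^n\}$, write $\mathbf x_k$ for the iterate obtained after $k$ greedy steps, and set $\Phi_k:=f(\mathbf x_k)-f(\mathbf x^*)\geq 0$. The entire argument rests on one contraction estimate: as long as $\mathbf x_k$ is not optimal, the greedy step satisfies $\Phi_{k+1}\leq\bigl(1-\tfrac{1}{2n-2}\bigr)\Phi_k$.

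First I would establish this contraction. Since $\mathbf x^*-\mathbf x_k\in\ker(A)\cap\Z^n$, the representation property of the Graver basis lets me write $\mathbf x^*-\mathbf x_k=\sum_{i=1}^t\alpha_i\mathbf g_i$ as a sign-compatible nonnegative integer combination of elements $\mathbf g_i\in\mathcal{G}(A)$, and by the integer Carath\'eodory property of Hilbert bases I may take $t\leq 2n-2$ \cite{Seboe:90}. Exactly as in the proof of Theorem~\ref{Theorem: Convex integer minimization}, sign-compatibility forces each partial point $\mathbf x_k+\alpha_i\mathbf g_i$ to have coordinates lying between those of $\mathbf x_k$ and $\mathbf x^*$, hence to be feasible. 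Applying the superadditivity of Lemma~\ref{Lemma: Superadditivity of separable convex functions} to the vectors $\alpha_i\mathbf g_i$, which all share the orthant of $\mathbf x^*-\mathbf x_k$, gives $f(\mathbf x^*)-f(\mathbf x_k)\geq\sum_{i=1}^t[f(\mathbf x_k+\alpha_i\mathbf g_i)-f(\mathbf x_k)]$, that is, $\Phi_k\leq\sum_{i=1}^t[f(\mathbf x_k)-f(\mathbf x_k+\alpha_i\mathbf g_i)]$. Because the greedy step selects the best feasible augmentation over all directions $\mathbf g\in\mathcal{G}(A)$ and all step lengths $\alpha\in\Z_+$, every summand is at most the greedy improvement $\Phi_k-\Phi_{k+1}$; therefore $\Phi_k\leq(2n-2)(\Phi_k-\Phi_{k+1})$, which rearranges to the claimed contraction.

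With the contraction in hand the step count is immediate: iterating gives $\Phi_k\leq\bigl(1-\tfrac{1}{2n-2}\bigr)^k\Phi_0\leq e^{-k/(2n-2)}\,2M$, so after $O\bigl((2n-2)\log(2M)\bigr)$ steps the gap is driven below the smallest positive difference of objective values attained on the (finite) set of feasible lattice points in $[\mathbf l,\mathbf u]$, forcing $\Phi_k=0$. By the optimality certificate of Theorem~\ref{Theorem: Convex integer minimization}, $\mathbf x_k$ is then optimal and no further greedy step exists, so the algorithm terminates. This yields a number of augmentation steps polynomial in $n$ and $\log M$, and hence in the encoding lengths of $A,\mathbf b,\mathbf l,\mathbf u,M$ and $\mathbf x_0$.

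The main obstacle is the contraction lemma, and within it the coupling of two ingredients: bounding the number of Graver summands by the Carath\'eodory constant $2n-2$ (so the contraction factor is $1/(2n-2)$ rather than something governed by the possibly exponentially many vertices of $\mathop{\mathrm{conv}}\{\mathbf x\in\N^n:A\mathbf x=\mathbf b\}$), and using separable-convex superadditivity to turn the single aggregate inequality $\Phi_k\leq\sum_i[\cdots]$ into a comparison against one greedy step. A secondary point requiring care is the passage from geometric decay to a bound on the number of steps needed for \emph{exact} optimality: this relies on the feasible objective values being finitely many, with the certificate used to detect optimality, and it is precisely the contraction of the initial gap $2M$ that contributes the $\log M$ term.
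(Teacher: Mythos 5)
Your proof is correct and follows essentially the same route as the paper's: the sign-compatible decomposition of $\mathbf x^*-\mathbf x_k$ into at most $2n-2$ Graver elements, feasibility of the partial augmentations $\mathbf x_k+\alpha_i\mathbf g_i$, and superadditivity together show that the greedy step captures at least a $\tfrac{1}{2n-2}$ fraction of the remaining gap, after which geometric decay from the initial gap $2M$ yields the polynomial step count (the paper delegates this last step to a citation of Ahuja--Magnanti--Orlin). The only point to watch is your final termination claim: driving the gap below the smallest positive difference $\delta$ of feasible objective values requires $O\bigl((2n-2)\log(2M/\delta)\bigr)$ steps rather than $O\bigl((2n-2)\log(2M)\bigr)$, so one implicitly needs $\delta$ bounded below (e.g., $f$ integer-valued, whence $\delta\geq 1$) --- a caveat the paper's own proof shares by hiding it in the citation.
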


\begin{proof}
Assume that $\mathbf x_0$ is not optimal and let $\mathbf x_{\min}$ be an optimal
solution to the given problem. Then $\mathbf x_{\min}-\mathbf x_0\in\ker(A)$ and thus
it can be written as a sign-compatible nonnegative integer linear
combination $\mathbf x_{\min}-\mathbf x_0=\sum_i \alpha_i \mathbf g_i$ of at most $2n-2$
Graver basis elements $\mathbf g_i\in\mathcal{G}(A)$. As in the proof of Theorem \ref{Theorem: Convex integer minimization},
sign-compatibility implies that for all $i$ the coordinates of $\mathbf x_0+\alpha_i \mathbf g_i$ lie
between the corresponding coordinates of $\mathbf x_0$ and
$\mathbf x_{\min}$. Consequently, we have $\mathbf l\leq \mathbf x_0+\alpha_i \mathbf g_i\leq \mathbf u$.
Because $\mathbf g_i\in\ker(A)$, we also have $A(\mathbf x_0+\alpha_i \mathbf g_i)=\mathbf b$ for all
$i$. Consequently, for all $i$ the vector $\mathbf x_0+\alpha_i \mathbf g_i$ would be
a feasible solution.

Due to the superadditivity from Lemma \ref{Lemma: Superadditivity of
separable convex functions}, we have
\[
0\geq f(\mathbf x_{\min})-f(\mathbf x_0)=
f\left(\mathbf x_0+\sum_{i=1}^{2n-2} \alpha_i \mathbf g_i\right)-f(\mathbf x_0)\geq
\sum_{i=1}^k [f(\mathbf x_0+\alpha_i \mathbf g_i)-f(\mathbf x_0)].
\]
Thus, at least one of the summands $f(\mathbf x_0+\alpha_i \mathbf g_i)-f(\mathbf x_0)$ must be smaller
than $\frac{1}{2n-2}[f(\mathbf x_{\min})-f(\mathbf x_0)]$, giving an improvement that
is at least $\frac{1}{2n-2}$ times the maximal possible improvement
$f(\mathbf x_{\min})-f(\mathbf x_0)$. Such a geometric improvement, however, implies
that the optimum is reached in a number of greedy augmentation
steps which is polynomial in the encoding lengths of $A$, $\mathbf b$, $\mathbf l$,
$\mathbf u$, $M$, and $\mathbf x_0$ \cite{Ahuja+Magnanti+Orlin}.
\end{proof}

Thus, once we have a polynomial size Graver basis, we get a polynomial time algorithm to solve the convex integer minimization problem at hand.

For this, let us consider again $n$-fold systems (introduced in Section \ref{s:n-fold-convex-max}). Two nice stabilization results established by Ho\c{s}ten and Sullivant \cite{Hosten+Sullivant:07} and Santos and Sturmfels \cite{Santos+Sturmfels:03} immediately imply that if $A_1$ and $A_2$ are kept fixed, then the size of the Graver basis increases only polynomially in the number $n$ of copies of $A_1$ and $A_2$.

\begin{proposition}\label{GraverComputation}
For any fixed $(r+s)\times t$ integer matrix $A$ there is a polynomial time
algorithm that, given any $n$, computes the Graver basis $\mathcal{G}(A^{(n)})$ of
the n-fold matrix $A^{(n)}=({\bf 1}_n\otimes A_1)\oplus(I_n \otimes A_2)$.
\end{proposition}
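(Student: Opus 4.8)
The plan is to derive the algorithm from the \emph{finiteness of the Graver complexity} of $A$. Recall that this is exactly the content of the stabilization theorems of Santos--Sturmfels and Ho\c{s}ten--Sullivant cited above: there is a constant $g=g(A)$, depending only on the fixed matrix $A$ and not on $n$, such that every element of $\mathcal{G}(A^{(n)})$ has at most $g$ nonzero bricks $\mathbf x^k\in\Z^t$. The idea is then to precompute, once and for all, the finitely many Graver bases $\mathcal{G}(A^{(1)}),\dots,\mathcal{G}(A^{(g)})$, and to assemble $\mathcal{G}(A^{(n)})$ for an arbitrary input $n$ by placing the bricks of these ``small'' elements into the $n$ available brick-slots in all possible ways.

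First I would make precise the underlying structural decomposition. The $n$-fold construction is completely symmetric under permuting the $n$ brick-slots: if $\mathbf v\in\ker(A^{(n)})$ then so is any vector obtained by permuting its bricks, and the property of being a Graver element (being nonzero and \emph{not} expressible as a sign-compatible sum $\mathbf v'+\mathbf v''$ of two nonzero kernel vectors) is invariant under such permutations. Hence $\mathcal{G}(A^{(n)})$ is closed under brick permutations. Now let $\mathbf v\in\mathcal{G}(A^{(n)})$; by the Graver-complexity bound it has $j\le g$ nonzero bricks, say in positions $i_1<\dots<i_j$. Gathering these into the first $j$ slots yields a vector $\mathbf w\in\ker(A^{(j)})$, since the blocks $A_2\mathbf v^{i_\ell}=\mathbf 0$ are individually unaffected while the coupling equation $A_1\bigl(\sum_\ell \mathbf v^{i_\ell}\bigr)=\mathbf 0$ is invariant under the regrouping. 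Moreover $\mathbf w\in\mathcal{G}(A^{(j)})$ and all its bricks are nonzero: any sign-compatible splitting of $\mathbf w$ in $\ker(A^{(j)})$ would lift slot-for-slot to a sign-compatible splitting of $\mathbf v$ in $\ker(A^{(n)})$, contradicting $\mathbf v\in\mathcal{G}(A^{(n)})$. Conversely, placing the $j$ bricks of any such all-nonzero-brick element $\mathbf w\in\mathcal{G}(A^{(j)})$, $j\le g$, into any $j$ of the $n$ slots produces an element of $\mathcal{G}(A^{(n)})$, by the same lifting argument run backwards. Thus
$$
  \mathcal{G}(A^{(n)})=\bigcup_{j=1}^{g}\Big\{\text{$j$-slot embeddings of the all-nonzero-brick elements of }\mathcal{G}(A^{(j)})\Big\}.
$$

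The algorithm and its complexity now follow. As preprocessing that depends only on the fixed $A$ (hence costs $\mathrm{O}(1)$ in $n$), compute $g=g(A)$ together with $\mathcal{G}(A^{(1)}),\dots,\mathcal{G}(A^{(g)})$, and extract from each the elements all of whose bricks are nonzero. Given $n$, for every $j\le g$, every such element $\mathbf w\in\mathcal{G}(A^{(j)})$, and every $j$-element subset $S\subseteq\{1,\dots,n\}$, output the vector whose bricks over $S$ (in increasing slot order) are the bricks of $\mathbf w$ and whose remaining bricks are zero. Because $\mathcal{G}(A^{(j)})$ is itself permutation-closed, ranging over subsets $S$ rather than over all injections already produces each Graver element exactly once. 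The number of vectors produced is at most $\sum_{j=1}^{g}\lvert\mathcal{G}(A^{(j)})\rvert\binom{n}{j}=\mathrm{O}(n^{g})$, a polynomial in $n$ because $g$ is a constant, and each is written down in time polynomial in its (polynomially bounded) encoding length.

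I expect the genuine obstacle to be the finiteness $g(A)<\infty$ \emph{uniformly in $n$}; this is precisely where the Santos--Sturmfels and Ho\c{s}ten--Sullivant stabilization theorems are invoked, and it is what collapses the naively exponential family of slot-patterns into a polynomial one. The remaining delicate point is the verification that compression and embedding preserve Graver-minimality in both directions, which rests on the observation that a sign-compatible decomposition in one $n$-fold system transfers slot-by-slot to the other --- exactly the place where the identical-block structure of $A^{(n)}$ is essential.
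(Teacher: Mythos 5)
Your proof is correct and follows the same route the paper takes: it rests on the finiteness of the Graver complexity $g(A)$ guaranteed by the Santos--Sturmfels and Ho\c{s}ten--Sullivant stabilization theorems, which is exactly what the paper cites at this point. The paper states the proposition without spelling out the argument, and your write-up correctly supplies the standard details --- the permutation symmetry, the bound on the number of nonzero bricks, and the slot-by-slot transfer of sign-compatible decompositions in both directions that makes compression and embedding preserve Graver-minimality --- matching the proof in the cited $n$-fold integer programming literature.
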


Combining this proposition with Theorem \ref{Theorem: Main theorem convex integer minimization}, we get following nice result from \cite{Hemmecke+Onn+Weismantel:08}.

\begin{theorem}\label{Theorem: n-fold convex minimization}
Let $A$ be a fixed integer $(r+s)\times t$ matrix and let $f:\R^{nt}\rightarrow\R$ be any separable convex function given by a comparison oracle. Then there is a polynomial time algorithm that, given $n$, a right-hand side vector $\mathbf b\in\Z^{r+ns}$ and some bound $|f(\mathbf x)|<M$ on $f$ over the feasible region, solves the n-fold convex integer programming problem
$$\min\{f(\mathbf x):\ A^{(n)}\mathbf x=\mathbf b,\ \mathbf x\in{\mathbb N}^{nt}\}.$$
\end{theorem}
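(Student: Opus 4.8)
The plan is to assemble a \emph{Graver augmentation} algorithm out of the two ingredients just established, exactly as the remark preceding the theorem suggests. First I would invoke Proposition~\ref{GraverComputation}: since $A$ is fixed, the Graver basis $\mathcal{G}(A^{(n)})$ of the $n$-fold matrix $A^{(n)}$ is computable in time polynomial in $n$, and in particular its cardinality $|\mathcal{G}(A^{(n)})|$ is bounded by a polynomial in $n$. This is precisely the ``polynomial size Graver basis'' hypothesis flagged as the only missing piece.

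Given such a basis, I would run the greedy augmentation scheme analyzed in Theorem~\ref{Theorem: Main theorem convex integer minimization}. Starting from any feasible $\mathbf x_0$, that theorem guarantees that the optimum is reached after a number of greedy Graver augmentation steps polynomial in the encoding lengths of $A^{(n)}$, $\mathbf b$, the variable bounds, $M$, and $\mathbf x_0$ --- hence polynomial in the input of the $n$-fold problem (the encoding length of $M$, namely $\log M$, enters only polynomially, and the absence of finite upper bounds is harmless, since the sign-compatibility argument in that proof uses only the nonnegativity constraint $\mathbf x\geq\mathbf0$). It then remains to check that (i) a single greedy step is computable in polynomial time and (ii) an initial feasible $\mathbf x_0$ of polynomially bounded encoding can be produced.

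For (i), fix $\mathbf g\in\mathcal{G}(A^{(n)})$. The univariate map $\alpha\mapsto f(\mathbf x_0+\alpha\mathbf g)$ is convex, being the restriction of the separable convex $f$ to a line, and feasibility $\mathbf x_0+\alpha\mathbf g\geq\mathbf0$ confines $\alpha$ to an interval of integers whose length is bounded in the encoding of $\mathbf x_0$ and $\mathbf g$. Using only the comparison oracle, I would locate the minimizing $\alpha$ by binary search over this interval at a cost of $\mathrm{O}(\text{poly})$ oracle calls, the bound $|f|<M$ serving to control any unbounded feasible direction. Ranging over the polynomially many $\mathbf g\in\mathcal{G}(A^{(n)})$ and keeping the best then yields the greedy improving vector in polynomial time. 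For (ii), I would perform a \emph{Phase~I} computation: minimize a measure of infeasibility --- a linear, hence separable convex, objective --- over an enlarged $n$-fold system for which a feasible start is trivially available, and drive it to zero (or certify infeasibility) with the very same augmentation machinery, returning a feasible $\mathbf x_0$ of polynomially bounded size.

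The step I expect to carry the real weight is the polynomiality of $|\mathcal{G}(A^{(n)})|$ together with its computation, which is exactly what Proposition~\ref{GraverComputation} delivers through the stabilization results of Ho\c{s}ten--Sullivant and Santos--Sturmfels; everything downstream is the careful but routine bookkeeping of the augmentation loop, the convex line search through the comparison oracle, and the Phase~I reduction.
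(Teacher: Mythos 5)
Your proposal is correct and follows essentially the same route as the paper, which proves this theorem simply by combining Proposition~\ref{GraverComputation} (polynomial-time computation of $\mathcal{G}(A^{(n)})$ for fixed $A$) with Theorem~\ref{Theorem: Main theorem convex integer minimization} (polynomially many greedy Graver augmentation steps suffice), together with the Phase~I remark for obtaining an initial feasible point. You merely make explicit some bookkeeping the paper leaves implicit, namely the convex line search for $\alpha$ via the comparison oracle.
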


Note that by applying an approach similar to Phase I of the simplex method one can also compute an initial feasible solution $x_0$ to the $n$-fold integer program in polynomial time based on greedy Graver basis directions \cite{DeLoera+Hemmecke+Onn+Weismantel:08,Hemmecke:2003b}.

We wish to point out that the presented approach can be
generalized to the mixed-integer situation and also to more general
objective functions that satisfy a certain
superadditivity/subadditivity condition, see
\cite{Hemmecke+Koeppe+Weismantel:08,Lee+Onn+Weismantel:08} for more
details. Note that for mixed-integer convex problems one may only expect an approximation result,
as there need not exist a rational optimum. In fact, already a mixed-integer greedy augmentation
vector can be computed only approximately. Nonetheless, the technical difficulties when adjusting the
proofs for the pure integer case to the mixed-integer situation can be overcome
\cite{Hemmecke+Koeppe+Weismantel:08}. It should be noted, however, that the Graver
basis of $n$-fold matrices does not show a stability result similar to the pure integer case as presented
in \cite{Hosten+Sullivant:07,Santos+Sturmfels:03}. Thus, we do not get a nice polynomial time algorithm for
solving mixed-integer convex $n$-fold problems.

\subsection{Practical algorithms}
\label{s:outer-approx}

In this section, the methods that we look at,
aimed at formulations having convex continuous relaxations,
are driven by O.R./engineering approaches, transporting and
motivated by successful mixed-integer linear programming technology and
smooth continuous nonlinear programming technology.
In Section \ref{PracGen} we discuss general algorithms that
make few assumptions beyond those that are typical for
convex continuous nonlinear programming.
In Section \ref{PracSOCP} we present some more specialized
techniques aimed at  convex quadratics.

\subsubsection{General algorithms}\label{PracGen}

Practical, broadly applicable approaches to general mixed-integer nonlinear programs
are aimed at problems involving convex minimization over a convex set
with some additional integrality restriction. Additionally, for the sake of
obtaining well-behaved continuous relaxations, a certain amount of
smoothness is usually assumed. Thus, in this section, the model that
we focus on is
\begin{equation}
\begin{aligned}
  \hbox{min}\quad & f(\mathbf x,\mathbf y)\\
  \hbox{s.t.}\quad & g(\mathbf x,\mathbf y) \leq \mathbf 0 \\
      & \mathbf l\le \mathbf y \le \mathbf u\\
  & \mathbf x\in\R^{n_1},\ \mathbf y \in \Z^{n_2},
\end{aligned} \label{eq:P}\tag*{$(\mbox{P}[\mathbf l,\mathbf u])$}
\end{equation}
where $f\colon \R^n\to\R$  and $g\colon \R^n\to\R^m$
are twice continuously-differentiable
convex functions,
 $\mathbf l\in (\Z\cup\{-\infty\})^{n_2}$,
$\mathbf u\in (\Z\cup\{+\infty\})^{n_2}$, and $\mathbf l\le \mathbf u$.
 It is also helpful to assume that
the feasible region of the relaxation of $(\mbox{P}[\mathbf l,\mathbf u])$ obtained by
replacing $\mathbf y\in \Z^{n_2}$
with $\mathbf y\in\R^{n_2}$ is bounded. We denote this continuous relaxation by  $(\mbox{P}_\R[\mathbf l,\mathbf u])$.

To describe the various algorithmic approaches, it is helpful to
define some related subproblems of  $(\mbox{P}[\mathbf l,\mathbf u])$ and associated relaxations.
Our notation is already designed for this.
For vector $\mathbf l'\in (\Z\cup\{-\infty\})^{n_2}$
and $\mathbf u'\in (\Z\cup\{+\infty\})^{n_2}$, with $\mathbf l \le \mathbf l'\le \mathbf u' \le \mathbf u$,
we have the \emph{subproblem} $(\mbox{P}[\mathbf l',\mathbf u'])$  and its
associated continuous relaxation $(\mbox{P}_\R[\mathbf l',\mathbf u'])$.

Already, we can see how the family of relaxations $(\mbox{P}_\R[\mathbf l',\mathbf u'])$
leads to the obvious extension of the Branch-and-Bound Algorithm of mixed-integer linear programming.
Indeed, this approach was experimented with in \cite{MR878885}.
The \emph{Branch-and-Bound Algorithm} for mixed-integer nonlinear programming
has been implemented as {\tt MINLP-BB} \cite{minlpbb}, with continuous nonlinear-programming
subproblem relaxations solved with the active-set solver {\tt filterSQP}
and also as {\tt SBB},  with associated subproblems
 solved with any of {\tt CONOPT}, {\tt SNOPT} and {\tt MINOS}.
Moreover,
Branch-and-Bound is available as an algorithmic option in
the actively developed code {\tt Bonmin}
\cite{bonmin,BMUM,bonminOptima}, which can be used as a
callable library, as a stand-alone solver, via the modeling languages {\tt AMPL} and {\tt GAMS},
and is available in source-code form, under the Common Public License,
from COIN-OR \cite{bonmincode},
available for running on NEOS \cite{bonminNEOS}.
By default, relaxations of subproblems are solved with
the interior-point solver {\tt Ipopt} (whose
availability options include all of those for {\tt Bonmin}), though
there is also an
interface to {\tt filterSQP}.
The Branch-and-Bound Algorithm in {\tt Bonmin}
includes effective strong branching and SOS branching.
It can also be used as a robust heuristic on problems for which the
relaxation $(\mbox{P}_\R)$ does not have a convex feasible region,
by setting negative `cutoff gaps'.

Another type of algorithmic approach emphasizes continuous nonlinear
programming over just the continuous variables of the formulation.
For fixed $\bar{\mathbf y} \in\Z^{n_2}$, we define
\begin{equation}
\begin{aligned}
  \hbox{min}\quad & f(\mathbf x,\mathbf y)\\
  \hbox{s.t.}\quad & g(\mathbf x,\mathbf y) \leq \mathbf0 \\
  & \mathbf y = \bar{\mathbf y} \\
  & \mathbf x\in\R^{n_1}.
\end{aligned} \label{eq:Py}\tag*{$(\mbox{P}^{\bar{\mathbf y}})$}
\end{equation}
Clearly any feasible solution to such a continuous nonlinear-programming
subproblem $(\mbox{P}^{\bar{\mathbf y}})$
yields an upper bound on the optimal value of $(\mbox{P}[\mathbf l,\mathbf u])$.
When $(\mbox{P}^{\bar{\mathbf y}})$ is infeasible, we may consider the continuous
nonlinear-programming feasibility subproblem
\begin{equation}
\begin{aligned}
  \hbox{min}\quad & \sum_{i=1}^m w_i\\
  \hbox{s.t.}\quad & g(\mathbf x,\mathbf y) \leq \mathbf w \\
  & \mathbf y = \bar{\mathbf y}\\
  & \mathbf x\in\R^{n_1}\\
  & \mathbf w\in \R^m_+.
\end{aligned} \label{eq:PFy}\tag*{$(\mbox{F}^{\bar{\mathbf y}})$}
\end{equation}

If we can find a way to couple the solution of upper-bounding
problems $(\mbox{P}^{\bar{\mathbf y}})$ (and the closely related
feasibility subproblems $(\mbox{F}^{\bar{\mathbf y}})$) with
a lower-bounding procedure exploiting the convexity assumptions,
then we can hope to build an
iterative procedure that will converge to a global
optimum of  $(\mbox{P}[\mathbf l,\mathbf u])$. Indeed, such a procedure is the
\emph{Outer-Approximation (OA) Algorithm} \cite{MR866413,MR918874}.
Toward this end, for a finite set of ``linearization points''
\[
{\mathcal K}:=\left\{\left(\mathbf x^k\in\R^{n_1},\mathbf y^k\in\R^{n_2}\right) ~:~ k=1,\ldots,K\right\},
\]
we define the mixed-integer \emph{linear} programming
relaxation
\begin{equation}
\begin{aligned}
  \hbox{min}\quad & z\\
   \hbox{s.t.}\quad  & \nabla f(\mathbf x^k,\mathbf y^k)^\top
  \left(
  \begin{matrix}
  \mathbf x-\mathbf x^k\\
  \mathbf y- \mathbf y^k
  \end{matrix}
  \right) + f(\mathbf x^k,\mathbf y^k) \leq z, \quad \forall\ \left(\mathbf x^k,\mathbf y^k\right)\in {\mathcal K}\\
 & \nabla g(\mathbf x^k,\mathbf y^k)^\top
  \left(
  \begin{matrix}
  \mathbf x-\mathbf x^k\\
  \mathbf y- \mathbf y^k
  \end{matrix}
  \right) + g(\mathbf x^k,\mathbf y^k) \leq 0, \quad \forall\ \left(\mathbf x^k,\mathbf y^k\right)\in {\mathcal K}\\
  & \mathbf x\in\R^{n_1}\\
  & \mathbf y\in \R^{n_2}, \quad \mathbf l\le \mathbf y \le \mathbf u \\
    & z\in \R.\\
\end{aligned} \label{eq:MILPk}\tag*{$(\mbox{P}^{{\mathcal K}}[\mathbf l,\mathbf u])$}
\end{equation}
We are now able to concisely state the basic OA Algorithm.

\begin{algorithm}[OA Algorithm]~\smallskip\par
\label{algo:OA}
\noindent {\em Input:} The mixed-integer nonlinear program $(\mbox{P}[\mathbf l,\mathbf u])$.

\noindent {\em Output:} An optimal solution $\left(\mathbf x^*,\mathbf y^*\right)$.

\begin{enumerate}[\rm\ 1.]
\item Solve the nonlinear-programming relaxation $(\mbox{P}_\R)$, let $\left(\mathbf x^1,\mathbf y^1\right)$ be an optimal solution,  and let $K:=1$, so that initially we have
${\mathcal K}=\left\{\left(\mathbf x^1,\mathbf y^1\right)\right\}$.
\item\label{MILPstep} Solve the mixed-integer linear programming relaxation $(\mbox{P}^{{\mathcal K}}[\mathbf l,\mathbf u])$, and let $\left(\mathbf x^*,\mathbf y^*,z^*\right)$
be an optimal solution. If $\left(\mathbf x^*,\mathbf y^*,z^*\right)$ corresponds to a feasible
solution of $(\mbox{P}[\mathbf l,\mathbf u])$ (i.e, if $f(\mathbf x^*,\mathbf y^*) \le z^*$ and $g(\mathbf x^*,\mathbf y^*)\le \mathbf 0$),
then STOP (with the optimal solution  $(\mathbf x^*,\mathbf y^*)$ of  $(\mbox{P}[\mathbf l,\mathbf u])$).
\item \label{cutstep1} Solve the continuous nonlinear-programming subproblem $(\mbox{P}^{\bar{\mathbf y}^*})$.
  \begin{enumerate}[\rm\ i.]
  \item Either a feasible solution $\left(\mathbf x^*,\mathbf y^*, z^*\right)$ is obtained,
  \item or  $(\mbox{P}^{\bar{\mathbf y}^*})$ is infeasible, in which case we solve the
  nonlinear-programming feasibility subproblem $(\mbox{F}^{\bar{\mathbf y}})$, and let its solution be   $\left(\mathbf x^*,\mathbf y^*, \mathbf u^*\right)$
\end{enumerate}
\item \label{cutstep2}  In either case, we augment the set ${\mathcal K}$ of linearization points,
by letting $K:=K+1$ and $\left(\mathbf x^K,\mathbf y^K\right):=\left(\mathbf x^*,\mathbf y^*\right)$.
\item GOTO \ref{MILPstep}.
\end{enumerate}
\end{algorithm}

Each iteration of Steps \ref{cutstep1}-\ref{cutstep2} generate a linear cut
that can improve the mixed-integer linear programming relaxation $(\mbox{P}^{{\mathcal K}}[\mathbf l,\mathbf u])$
that is repeatedly solved in Step \ref{MILPstep}. So clearly the sequence of
optimal objective values
for $(\mbox{P}^{{\mathcal K}}[\mathbf l,\mathbf u])$ obtained in  Step \ref{MILPstep} corresponds to a nondecreasing sequence of lower bounds on the
optimum value of $(\mbox{P}[\mathbf l,\mathbf u])$. Moreover each linear cut returned from Steps \ref{cutstep1}-\ref{cutstep2} cuts off the previous solution of
$(\mbox{P}^{{\mathcal K}}[\mathbf l,\mathbf u])$ from Step \ref{MILPstep}.
A precise proof of convergence (see for example \cite{bonmin}) uses these simple observations, but it also requires
an additional assumption that is standard in continuous nonlinear programming (i.e.
a ``constraint qualification'').

Implementations of OA include {\tt DICOPT} \cite{gamsdicopt}
which can be used with either of the mixed-integer
linear programs codes {\tt Cplex} and {\tt Xpress-MP},
in conjunction with any of the continuous nonlinear programming codes
{\tt CONOPT}, {\tt SNOPT} and {\tt MINOS} and is available with {\tt GAMS}.
Additionally {\tt Bonmin} has OA as an algorithmic option,
which can use {\tt Cplex}  or the COIN-OR code {\tt Cbc} as its mixed-integer
linear programming solver,  and {\tt Ipopt} or {\tt FilterSQP} as
its  continuous nonlinear programming solver.

Generalized Benders Decomposition \cite{MR0327310} is a technique that is closely related
to and substantially predates the OA Algorithm. In fact, one can regard
OA as a proper strengthening of Generalized Benders Decomposition (see  \cite{MR866413,MR918874}),
so as a practical tool, we view it as superseded by OA.

Substantially postdating the development of the OA Algorithm is the
simpler and closely related \emph{Extended Cutting Plane (ECP) Algorithm}
introduced in \cite{WesPet1995}.
The original  ECP Algorithm is a straightforward
generalization of \emph{Kelley's Cutting-Plane Algorithm} \cite{kelley1960}  for
convex continuous nonlinear programming (which predates
the development of the OA Algorithm).
Subsequently, the ECP Algorithm has been enhanced and further developed
(see, for example \cite{WesPorn2002,WesLun2005}) to handle,
for example, even pseudo-convex functions.

The motivation for the ECP Algorithm
is that continuous nonlinear programs are expensive to
solve, and all that the associated solutions give us are further linearization points for
$(\mbox{P}^{{\mathcal K}}[\mathbf l,\mathbf u])$. So the ECP Algorithm dispenses altogether with the solution
of  continuous nonlinear programs. Rather, in the most rudimentary version,
after each solution of the mixed-integer linear program $(\mbox{P}^{{\mathcal K}}[\mathbf l,\mathbf u])$,
the most violated constraint (i.e, of $f(\mathbf x^*,\mathbf y^*) \le z$ and $g(\mathbf x^*,\mathbf y^*)\le \mathbf0$)
is linearized and appended to $(\mbox{P}^{{\mathcal K}}[\mathbf l,\mathbf u])$.
This simple iteration is enough to easily establish convergence (see \cite{WesPet1995}).
It should be noted that for the case in which there are no integer-constrained variables,
then at each step $(\mbox{P}^{{\mathcal K}}[\mathbf l,\mathbf u])$ is just a continuous linear program and
we exactly recover Kelley's Cutting-Plane Algorithm for convex continuous nonlinear programming.

It is interesting to note that Kelley, in his seminal paper \cite{kelley1960},
already considered application of his approach to \emph{integer} nonlinear programs.
In fact, Kelley cited Gomory's seminal work on integer programming \cite{Gom58,Gom58b}
which was also taking place in the same time period, and he discussed how the approaches
could be integrated.

Of course, many practical improvements can be made
to the rudimentary ECP Algorithm. For example,
more constraints can be linearized at each iteration.
An implementation of the ECP Algorithm is the code {\tt Alpha-ECP}
(see \cite{WesLun2005})
which uses {\tt Cplex}  as its mixed-integer
linear programming solver and is available with {\tt GAMS}.
The general experience is that for mildly nonlinear problems,
an ECP Algorithm can outperform an OA Algorithm. But
for a highly nonlinear problem, the performance of
the ECP Algorithm is limited by the performance of Kelley's
Cutting-Plane Algorithm, which can be quite poor on
highly-nonlinear purely continuous problems. In such cases,
it is typically better to use an OA Algorithm, which will handle the
nonlinearity in a more sophisticated manner.

In considering again the performance of an OA Algorithm
on a mixed-integer nonlinear program $(\mbox{P}[\mathbf l,\mathbf u])$, rather than
the  convex continuous nonlinear programming problems $(\mbox{P}^{\bar{\mathbf y}^*})$ and
$(\mbox{F}^{\bar{\mathbf y}})$ being too time
consuming to solve (which led us to the ECP Algorithm), it can
be the case that solution of the mixed-integer linear programming problems
$(\mbox{P}^{{\mathcal K}}[\mathbf l,\mathbf u])$ dominate the running time.
Such a situation led to the \emph{Quesada-Grossmann Branch-and-Cut Algorithm} \cite{QG1992}.
The viewpoint is that the mixed-integer linear programming problems
$(\mbox{P}^{{\mathcal K}}[\mathbf l,\mathbf u])$ are solved by a Branch-and-Bound or Branch-and-Cut
Algorithm. During the solution of the mixed-integer linear programming problem
$(\mbox{P}^{{\mathcal K}}[\mathbf l,\mathbf u])$, whenever a new solution is found (i.e., one that has
the variables $\mathbf y$ integer), we interrupt the solution process for $(\mbox{P}^{{\mathcal K}}[\mathbf l,\mathbf u])$, and we
solve the convex continuous nonlinear programming problems $(\mbox{P}^{\bar{\mathbf y}^*})$ to derive
new outer approximation cuts that
are appended to mixed-integer linear programming problem
$(\mbox{P}^{{\mathcal K}}[\mathbf l,\mathbf u])$. We then continue with the solution process
for $(\mbox{P}^{{\mathcal K}}[\mathbf l,\mathbf u])$. The Quesada-Grossmann Branch-and-Cut Algorithm
is available as an option in {\tt Bonmin}.

Finally, it is clear that the essential scheme of
the Quesada-Grossmann Branch-and-Cut Algorithm admits enormous flexibility.
The \emph{Hybrid Algorithm} \cite{bonmin} incorporates two important enhancements.

First, we can seek to further improve the linearization $(\mbox{P}^{{\mathcal K}}[\mathbf l,\mathbf u])$
by solving convex continuous nonlinear programming problems  at additional nodes of the mixed-integer linear programming
Branch-and-Cut tree for $(\mbox{P}^{{\mathcal K}}[\mathbf l,\mathbf u])$ --- that is, not just when
solutions are found having $\mathbf y$ integer. In particular, at any node
$(\mbox{P}^{{\mathcal K}}[\mathbf l',\mathbf u'])$ of the
mixed-integer linear programming
Branch-and-Cut tree, we can solve the
associated convex continuous nonlinear programming subproblem
$(\mbox{P}_\R[\mathbf l',\mathbf u'])$: Then, if in the solution  $(\mathbf x^*, \mathbf y^*)$
we have that $\mathbf y^*$ is
integer, we may update the incumbent and fathom the node; otherwise, we
append $(\mathbf x^*, \mathbf y^*)$ to the set ${\mathcal K}$ of linearization points.
In the extreme case, if we solve these continuous nonlinear programming subproblems
at every node, we essentially have the
Branch-and-Bound Algorithm for mixed-integer nonlinear programming.

A second enhancement is based on working harder to find a
solution $(\mathbf x^*, \mathbf y^*)$ with  $\mathbf y^*$
integer at selected nodes $(\mbox{P}^{{\mathcal K}}[\mathbf l',\mathbf u'])$
 of the mixed-integer linear programming
Branch-and-Cut tree.
The idea is that at a node $(\mbox{P}^{{\mathcal K}}[\mathbf l',\mathbf u'])$,
we perform a time-limited
mixed-integer linear programming
Branch-and-Bound Algorithm.
If we are successful, then we will have
found a solution to the node with $(\mathbf x^*, \mathbf y^*)$ with  $\mathbf y^*$ integer,
and then we perform an OA iteration (i.e., Steps \ref{cutstep1}-\ref{cutstep2})
on $(\mbox{P}[\mathbf l',\mathbf u'])$
which will improve the linearization $(\mbox{P}^{{\mathcal K}}[\mathbf l',\mathbf u'])$.
We can then repeat this until we have solved the
mixed-integer nonlinear program $(\mbox{P}[\mathbf l',\mathbf u'])$ associated with the node.
If we do this without time limit at the root node $(\mbox{P}[\mathbf l,\mathbf u])$, then the
entire procedure reduces to the OA Algorithm.
The Hybrid Algorithm was developed for and first made available as part of {\tt Bonmin}.

{\tt FilMint} \cite{AbhLL2006} is another successful
modern code, also based on enhancing the general
framework of the Quesada-Grossmann Branch-and-Cut Algorithm.
The main additional innovation introduced with {\tt FilMint} is the
idea of using ECP cuts rather than only performing OA iterations
for getting cuts to improve the linearizations $(\mbox{P}^{{\mathcal K}}[\mathbf l',\mathbf u'])$.
Subsequently, this feature was also added to {\tt Bonmin}.
{\tt FilMint} was put together from the continuous
nonlinear programming active-set code {\tt FilterSQP},
and the mixed-integer linear programming code {\tt MINTO}.
Currently, {\tt FilMint} is only generally available
via NEOS \cite{filmintNeos}.

It is worth mentioning that just as for mixed-integer linear
programming, effective heuristics can and should be
used to provide good upper bounds quickly. This can
markedly improve the performance of any of the algorithms
described above.
Some examples of work in this direction are \cite{bonminHeur}
and \cite{efpump}.

\subsubsection{Convex quadratics and second-order cone programming}\label{PracSOCP}

Though we will not go into any details, there is
considerable algorithmic work and associated software
that seeks to leverage more specialized
(but still rather general and powerful) nonlinear models
and existing convex continuous nonlinear-programming algorithms
for the associated relaxations. In this direction, recent work has focused
on conic programming relaxations (in particular, the semi-definite and
second-order cones). On the software side, we point to work on
the binary quadratic and max-cut problems (via semi-definite
relaxation)
\cite{RRW,rendl-rinaldi-wiegele-2008} with the code {\tt Biq Mac}
\cite{BiqMac}. We also note that {\tt Cplex} (v11)
has a capability aimed at solving mixed-integer quadratically-constrained
programs that have a convex continuous relaxation. 

One important direction for approaching quadratic models is at the modeling level.
This is particulary useful for the convex case, where there is a strong and
appealing relationship between
quadratically constrained programming and second-order cone programming (SOCP).
A \emph{second-order cone constraint} is one that expresses that the Euclidean norm of
an affine function should be no more than another affine function.
An \emph{SOCP} problem consists of minimizing a linear function over
a finite set of second-order cone constraints.
Our interest in SOCP stems from the fact that (continuous) convex quadratically constrained programming problems
can be reformulated as SOCP problems
(see \cite{LVBL}).
The appeal is that very efficient interior-point algorithms
have been developed for solving SOCP problems (see \cite{GoldLiuWang}, for example),
and there is considerable mature software available that has functionality for
efficient handling of SOCP problems; see, for example:
{\tt SDPT3} \cite{sdpt3} (GNU GPL open-source license; Matlab) ,
{\tt SeDuMi} \cite{sedumiJ} (GNU GPL open-source license; Matlab),
{\tt LOQO} \cite{loqo} (proprietary; C library with interfaces to AMPL and Matlab),
{\tt MOSEK} \cite{mosek} (proprietary; C library with interface to Matlab),
{\tt Cplex} \cite{cplex} (proprietary; C library). Note also that {\tt MOSEK} and {\tt Cplex}
can handle integer variables as well; one can expect that the approaches essentially marry
specialized SOCP solvers with Branch-and-Bound and/or Outer-Approximation Algorithms. 
Further branch-and-cut methods for mixed-integer
SOCP, employing linear and convex quadratic cuts \cite{cezik-iyengar:2005} and a careful treatment
of the non-differentiability inherent in the SOCP constraints, have recently been proposed \cite{Drewes}.


Also in this vein is recent work by G\"unl\"uk and Linderoth \cite{GunlukLind,GunLind2}.
Among other things, they demonstrated that many practical mixed-integer quadratically constrained
programming formulations have substructures that admit extended formulations that can be easily strengthened
as certain integer SOCP problems. This approach is well known in the mixed-integer linear programming
literature. Let
\begin{equation*}
Q:=\Big\{w\in\R,\, \mathbf x\in\R^{n}_+,\,\mathbf z\in \{0,1\}^{n} ~:~
w  \ge\sum_{i=1}^n r_i x_i^2,\, u_i z_i\ge x_i\ge l_i z_i, \ i=1,2,\ldots,n\Big\},
\end{equation*}
where $r_i\in\R_+$ and $u_i,l_i\in \R$ for all $i=1,2,\ldots,n$.
The set $Q$ appears in several formulations as a substructure.
Consider the following extended formulation of $Q$
\begin{eqnarray*}
\bar{Q} &:=& \Big\{w\in\R,~ \mathbf x\in\R^{n},\mathbf y\in\R^{n},\mathbf z\in\R^{n} ~:~ w  \ge\sum_i r_i y_i,\\
&& \qquad (x_i,y_i,z_i)\in S_i,~~ i=1,2,\ldots,n\Big\},
\end{eqnarray*}
where
\begin{equation*}
S_i := \Big\{(x_i,y_i,z_i)\in\R^{2}\times\{0,1\} ~:~
  y_i\ge x_i^2,\ ~ u_i z_i\ge x_i\ge l_i z_i,~x_i\ge0\Big\},
\end{equation*}
and $u_i,l_i\in \R$. The convex hull of
each $S_i$ has the form 
\begin{equation*}
S_i^c~ :=~\left\{(x_i,y_i,z_i)\in\R^{3} ~:~
	y_i z_i\ge x_i^2,~ u_i z_i\ge x_i\ge l_i z_i,~1\ge z_i\ge0,~x_i,y_i\ge0\right\}
\end{equation*}
(see \cite{ceria-soares:1999,GunlukLind,GunLind2,tawarmalani-sahinidis:01}).
Note that $x_i^2-y_iz_i$ is \emph{not} a convex function, but nonetheless
$S_i^c$ is a convex set. Finally, we can state the
result of  \cite{GunlukLind,GunLind2}, which also follows from a more general result of
\cite{Hiriart-Urruty-Lemarechal:93:part-two},  that the convex hull of
the extended formulation $\bar{Q}$ has the form
\begin{eqnarray*}
\bar{Q}^c & := & \Big\{w\in\R,~ \mathbf x\in\R^{n},~ \mathbf y\in\R^{n},~ \mathbf z\in \R^{n} ~:~
w  \ge\sum_{i=1}^n r_i y_i,\\
&& \qquad (x_i,y_i,z_i)\in S^c_i,~ i=1,2,\ldots,n\Big\}.
\end{eqnarray*}
Note that all of the nonlinear constraints describing the $S_i^c$ and
$\bar{Q}^c$ are rather simple quadratic constraints.
Generally, it is well known that even the ``restricted hyperbolic constraint''
\[
y_i z_i \ge \sum_{k=1}^n x_k^2~,~ {\mathbf x}\in\R^n,~ y_i\ge 0,~  z_i \ge 0
\]
(more general than the nonconvexity in $S_i^c$)
can be reformulated as the second-order cone constraint
\[
\left\|
\left(
\begin{array}{c}
2 \mathbf x \\
y_i -z_i
\end{array}
\right)
\right\|_2
\le
y_i+z_i~.
\]

In this subsection, in the interest of concreteness and brevity, 
we have focused our attention on 
convex quadratics and second-order cone programming. 
However, it should be noted that a related approach, with broader applicability
(to all convex objective functions)
is presented in \cite{FrangGent1}, and a computational
comparison is available in \cite{FrangGent3}. Also, it is 
relevant that many convex non-quadratic functions are representable as
second-order cone programs (see \cite{AAG2009,Ben-Tal-Nemirovski:2001}).

\section{Polynomial optimization}
\label{s:general-polynomial}


In this section, we focus our attention on the study of optimization models
involving polynomials only, but without any assumptions on convexity or concavity.
It is worth emphasizing the fundamental result
of Jeroslow (Theorem \ref{th:quadopt-incomputable})
that even pure integer quadratically constrained programming
is \emph{undecidable}.
One can however avoid this daunting pitfall by bounding the variables, and
this is in fact an assumption that we should and do make for the
purpose of designing practical approaches.
From the point of view of most applications that
we are aware of, this is a very reasonable assumption.
We must be cognizant of the fact that the geometry of even quadratics on boxes is
daunting from the point of view of mathematical programming; see Figure \ref{fig:quadr}.
Specifically, the convex envelope of the graph of the product $x_1x_2$ on a box
deviates badly from the graph, so relaxation-based methods are intrinsically handicapped.
It is easy to see, for example, that for $\delta_1,\delta_2>0$, $x_1x_2$ is strictly
convex on the line segment joining $(0,0)$ and $(\delta_1,\delta_2)$; while
$x_1x_2$ is strictly
concave on the line segment joining $(\delta_1,0)$ and $(0,\delta_2)$ .

  \begin{figure}[h]
    \centering
    \ifpdf
    \includegraphics[height=5cm]{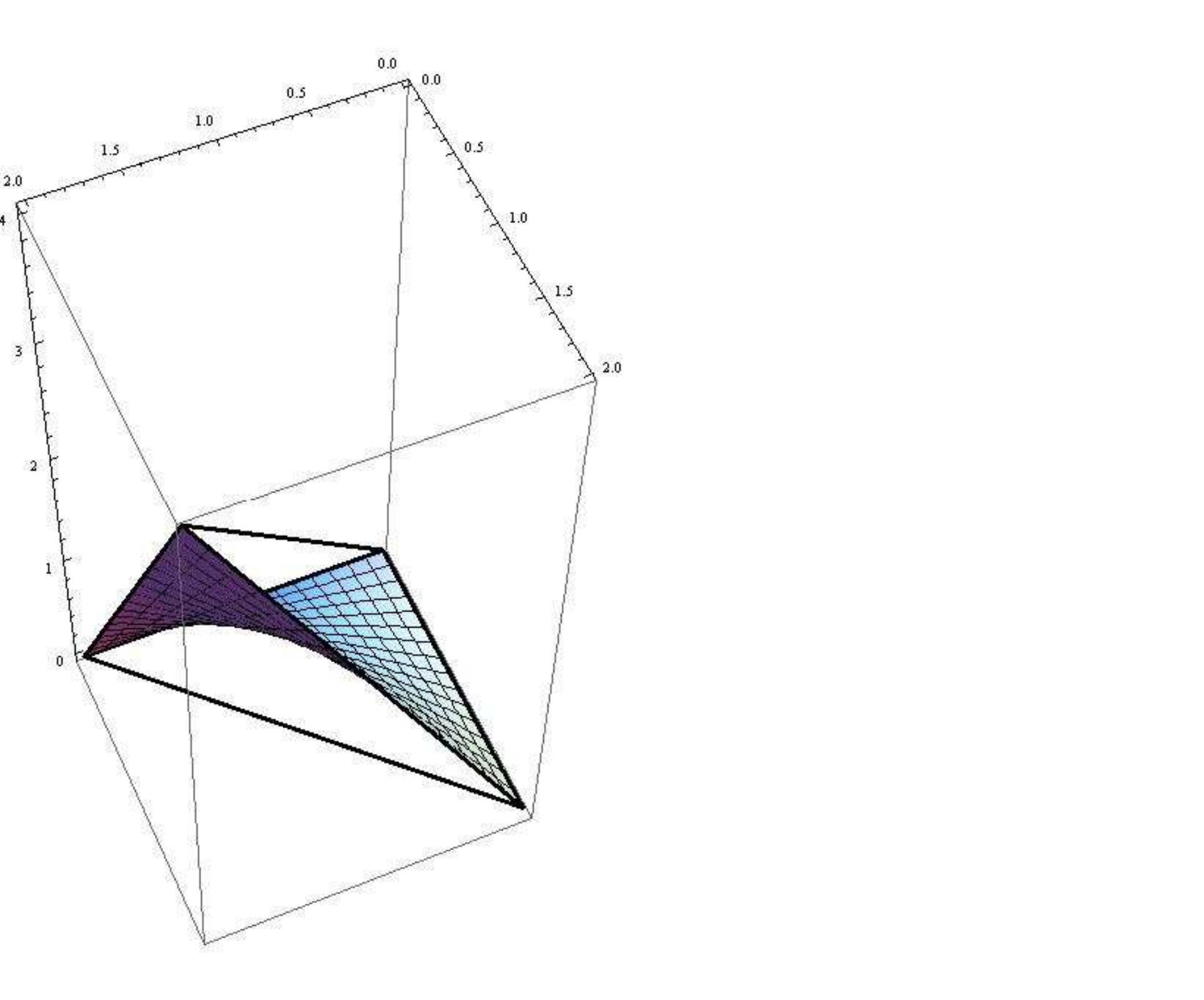}
    \else
    \includegraphics[height=5cm]{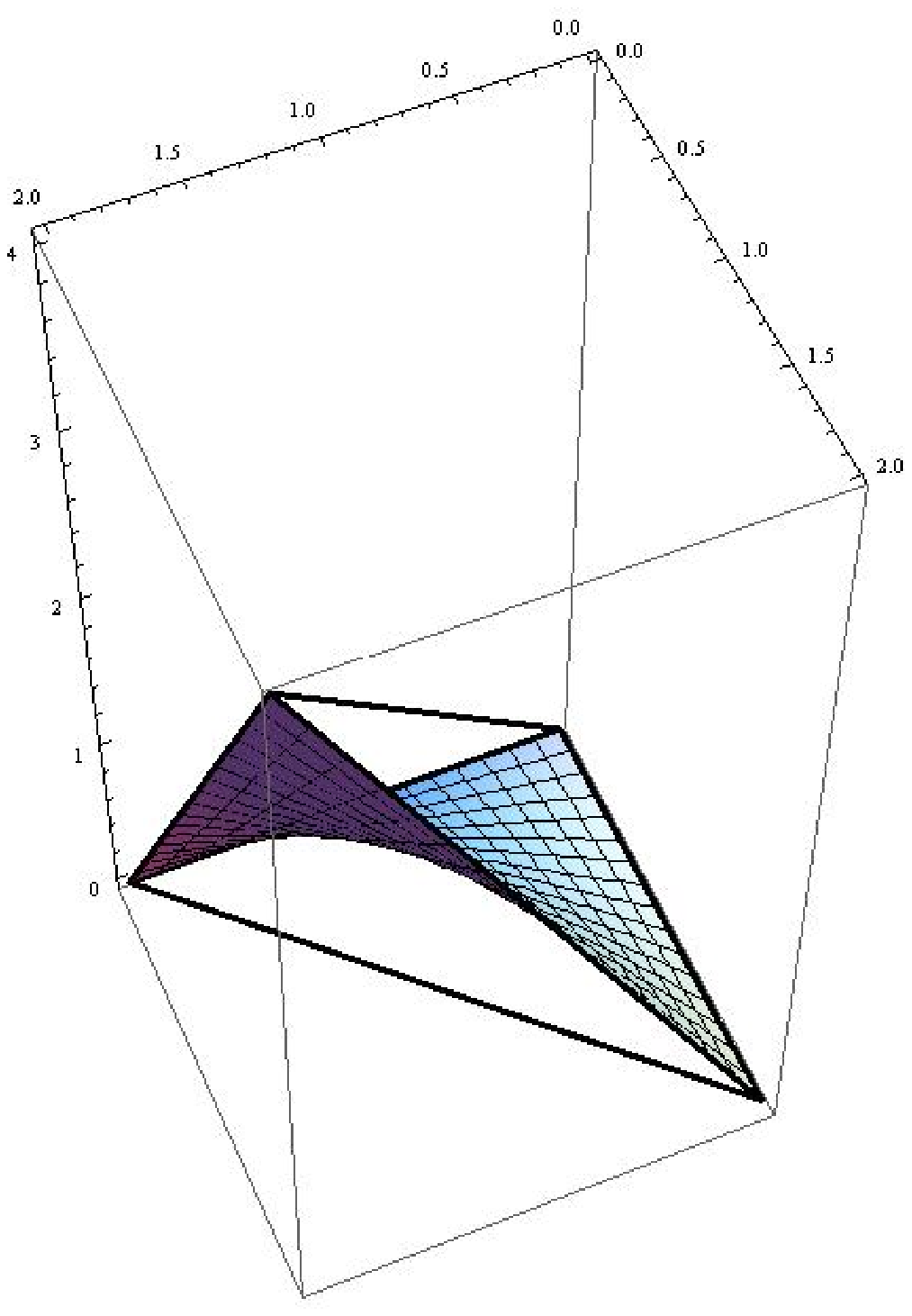}
    \fi
    \caption{Tetrahedral convex envelope of the graph of the product $x_1x_2$ on a box
    }
    \label{fig:quadr}
  \end{figure}%

Despite these difficulties, we have positive results.
In Section \ref{s:fptas}, the highlight is
a \emph{fully polynomial time approximation scheme (FPTAS)} for problems
involving maximization of a polynomial in fixed dimension,
over the mixed-integer points in a polytope. In Section
\ref{s:sos-programming},  we broaden our focus to allow
feasible regions defined by inequalities in polynomials (i.e.,
semi-algebraic sets). In this setting, we do not present (nor could we expect)
complexity results as strong as for linear constraints, but rather we
show how tools of semi-definite programming are being developed to
provide, in a systematic manner, strengthened relaxations.
Finally, in Section \ref{s:quadratics}, we describe recent
computational advances in the special case of semi-algebraic programming
for which all of the functions are quadratic --- i.e.,
\emph{mixed-integer quadratically constrained programming} (MIQCP).

\subsection{Fixed dimension and linear constraints: An FPTAS}
\label{s:fptas}

As we pointed out in the introduction (Theorem \ref{th:deg4-dim2-hard}),
optimizing degree-4 polynomials over problems with two integer variables is
already a hard problem.  Thus, even when we fix the dimension, we cannot get a
polynomial-time algorithm for solving the optimization problem.  The best we
can hope for, even when the number of both the continuous and the integer
variables is fixed, is an approximation result.

\begin{definition}{\bf(FPTAS)}
\begin{enumerate}[\rm(a)]
\item An algorithm ${\cal  A}$
is an  \emph{${\epsilon}$-approximation algorithm}
for a maximization  problem with optimal cost $f_{\max}$,
if for each instance  of the problem of encoding length~$n$,
${\cal  A}$  runs in polynomial time in $n$ and returns
a feasible solution with cost   $f_{\rm {\cal  A}} $, such that
$f_{\rm {\cal  A}}    \geq (1-\epsilon) \cdot  f_{\max}$.
\item A family $\{\mathcal A_\epsilon\}_\epsilon$ of $\epsilon$-approximation
  algorithms is a \emph{fully polynomial time approximation scheme (FPTAS)} if
  the running time of~$\mathcal A_\epsilon$ is polynomial in the encoding size
  of the instance and $1/\epsilon$.
\end{enumerate}
\end{definition}
Indeed it is possible to obtain an FPTAS for general polynomial optimization
of mixed-integer feasible sets in polytopes
\cite{deloera-hemmecke-koeppe-weismantel:intpoly-fixeddim,DeloeraHemmeckeKoeppeWeismantel06,deloera-hemmecke-koeppe-weismantel:mixedintpoly-fixeddim-fullpaper}.  To
explain the method of the FPTAS, we need to review the
theory of \emph{short rational generating functions} pioneered by Barvinok
\cite{Barvinok94,BarviPom}.  The FPTAS itself appears in Section \ref{s:actual-fptas}.

\subsubsection{Introduction to rational generating functions}

We explain the theory on a simple, one-dimensional example.  Let us consider the
set~$S$ of integers in the interval $P=[0,\dots,n]$; see
the top of Figure \ref{fig:1d-brion}\,(a).
We associate with~$S$ the polynomial
\begin{math}
  g(S; z) = z^0 + z^1 + \dots + z^{n-1} + z^n;
\end{math}
i.e., every integer $\alpha\in S$ corresponds to a monomial~$z^\alpha$
with coefficient~$1$ in the polynomial~$g(S; z)$.
This polynomial is called the \emph{generating function} of~$S$ (or of~$P$).
From the viewpoint of computational complexity, this generating function is of
exponential size (in the encoding length of~$n$), just as an explicit list of
all the integers~$0$, $1$, \dots, $n-1$, $n$ would be.  However, we can observe that
$g(S; z)$ 
is a finite geometric series, so there exists
a simple summation formula that expresses
it 
in a much more compact way:
\begin{equation}
  \label{eq:1d-example-formula-summed}
  g(S; z) = z^0 + z^1 + \dots + z^{n-1} + z^n
  = \frac{1-z^{n+1}}{1-z}.
\end{equation}
The ``long'' polynomial has a ``short'' representation as a rational function.
The encoding length of this new formula is \emph{linear} in the encoding
length of~$n$.

Suppose now someone presents to us a finite set~$S$ of integers as a
generating
function $g(S;z)$.  Can we decide whether the set is nonempty?  In fact, we
can do something much stronger even -- we can \emph{count} the integers in the
set~$S$, simply by evaluating at $g(S;z)$ at $z=1$.  On our example we have
\begin{math}
  |S| = g(S;1) = 1^0 + 1^1 + \dots + 1^{n-1} + 1^n = n + 1.
\end{math}
We can do the same on the shorter, rational-function formula
if we are careful with the (removable) singularity $z=1$.  We just compute
the limit
\begin{displaymath}
  |S| = \lim_{z\to1} g(S;z) = \lim_{z\to1} \frac{1-z^{n+1}}{1-z}
  = \lim_{z\to1} \frac{-(n+1) z^n}{-1} = n+1
\end{displaymath}
using the Bernoulli--l'H\^opital rule.  Note that we have avoided to carry out
a polynomial division, which would have given us the long polynomial again.

The summation formula~\eqref{eq:1d-example-formula-summed} can
also be written in a slightly different way:
\begin{equation}
  \label{eq:1d-example-formula-summed-basic}
  g(S; z)
  = \frac{1}{1-z} - \frac{z^{n+1}}{1-z}
  = \frac{1}{1-z} + \frac{z^n}{1-z^{-1}}
\end{equation}
Each of the two summands on the right-hand side can be viewed as the summation
formula of an infinite geometric series:
\begin{subequations}
  \label{eq:1d-example-summands}
  \begin{align}
    g_1(z) &= \frac{1}{1-z} = z^0 + z^1 + z^2 + \dots,\\
    g_2(z) &= \frac{z^n}{1-z^{-1}} = z^n + z^{n-1} + z^{n-2} + \dots.
  \end{align}
\end{subequations}
The two summands have a geometrical interpretation.  If we view each geometric
series as the generating function of an (infinite) lattice point set, we
arrive at the picture shown in Figure \ref{fig:1d-brion}.
\begin{figure}[t]
  \centering
  \hfil(a)\quad\subfigure{%
    \begin{minipage}[b]{.6\linewidth}
      \ifpdf
    \input{1d-example-0.pdf_t}
    \else
    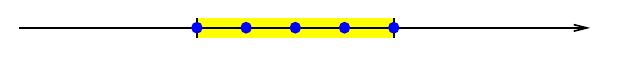
    \fi\par
      $=$\par\vspace{1ex} \ifpdf
    \input{1d-example-s1.pdf_t}
    \else
    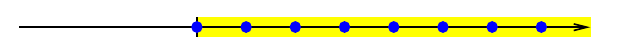
    \fi\par
      $+$\par\vspace{1ex} \ifpdf
    \input{1d-example-s2.pdf_t}
    \else
    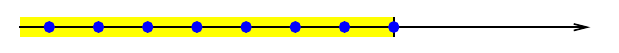
    \fi
    \end{minipage}
  }
  \hfil(b)\quad\subfigure{
    \begin{minipage}[b]{.25\linewidth}
      \ifpdf
    \input{1d-convergence.pdf_t}
    \else
    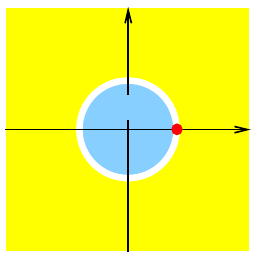
    \fi
    \end{minipage}
  }
  \caption{\textbf{(a)} One-dimensional Brion theorem. \textbf{(b)} The domains of convergence
    of the Laurent series.}
  \label{fig:1d-brion}
  \label{fig:1d-convergence}
\end{figure}
We remark that all integer points in the
interval $[0,n]$ are covered twice, and also all integer points outside the
interval are covered once.
This phenomenon is due to the \emph{one-to-many
  correspondence} of rational functions to their Laurent series.  When we
consider Laurent series of the function $g_1(z)$ about $z=0$, the pole $z=1$
splits the complex plane into two domains of convergence
(Figure \ref{fig:1d-convergence}):  For $|z| < 1$, the
power series
\begin{math}
  z^0 + z^1 + z^2 + \dots
\end{math}
converges to $g_1(z)$.  As a matter of fact, it converges absolutely and
uniformly on every compact subset of the open circle $\{\, z\in\C: |z| < 1
\,\}$.  For $|z| > 1$, however, the
series 
diverges.  On the other hand,
the Laurent series
\begin{math}
  -z^{-1} - z^{-2} - z^{-3} - \dots
\end{math}
converges (absolutely and compact-uniformly) on the open circular ring $\{\,
z\in\C: |z| > 1 \,\}$ to the function $g_1(z)$, whereas it diverges for $|z| <
1$.  The same holds for~$g_2(z)$.
Altogether we have:
\begin{align}
  g_1(z) & =
  \begin{cases}
    z^0 + z^1 + z^2 + \dots & \text{for $|z| < 1$} \\
    -z^{-1} - z^{-2} - z^{-3} - \dots & \text{for $|z| > 1$}
  \end{cases}\\
  g_2(z) &=
  \begin{cases}
    -z^{n+1} - z^{n+2} - z^{n+3} - \dots & \text{for $|z| < 1$}\\
    z^n + z^{n-1} + z^{n-2} + \dots & \text{for $|z| > 1$}
  \end{cases}
\end{align}
We can now see that the phenomenon we observed in
formula~\eqref{eq:1d-example-summands} and Figure \ref{fig:1d-brion} is due to
the fact that we had picked two Laurent series for the summands $g_1(z)$ and
$g_2(z)$ that do not have a common domain of convergence;
the situation of
formula~\eqref{eq:1d-example-summands} and Figure \ref{fig:1d-brion}
appears again in the
$d$-dimensional case as \emph{Brion's Theorem}. \smallbreak

Let us now consider a \emph{two-dimensional} cone~$C$ spanned by the vectors $\mathbf b_1=(\alpha, -1)$ and $\mathbf
b_2=(\beta, 1)$; see Figure \ref{fig:2d-friendly-tiling} for an example with
$\alpha=2$ and $\beta=4$.
\begin{figure}[t]
  \centering
  \hfil(a)\quad\subfigure{%
    \begin{minipage}[b]{.52\linewidth}
      \ifpdf
    \input{2d-friendly-tiling.pdf_t}
    \else
    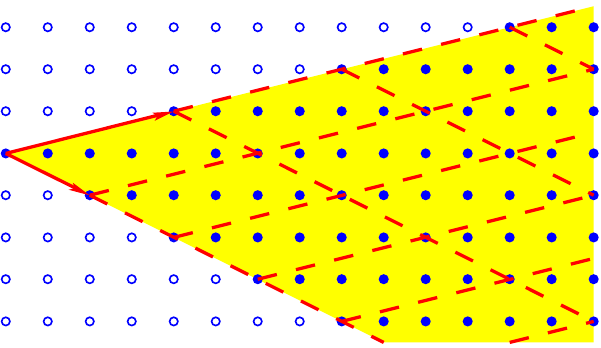
    \fi
    \end{minipage}
  }
  \hfil(b)\quad\subfigure{
    \begin{minipage}[b]{.3\linewidth}
      \ifpdf
    \input{2d-friendly-tiling-2.pdf_t}
    \else
    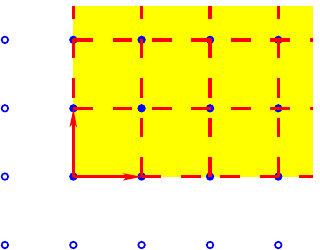
    \fi
    \end{minipage}
  }
\caption{\textbf{(a)} Tiling a rational two-dimensional cone with copies of the
  fundamental parallelepiped. \textbf{(b)} The semigroup~$S\subseteq\Z^2$ generated by
  $\mathbf b_1$ and $\mathbf b_2$ is a linear image of~$\Z_+^2$}
\label{fig:2d-friendly-tiling}
  \label{fig:2d-friendly-tiling-2}
\end{figure}
We would like to write down a generating function for the integer points in
this cone.  We apparently need a generalization of the geometric series, of
which we made use in the one-dimensional case.  The key observation now is
that using copies of the half-open \emph{fundamental parallelepiped},
\begin{math}
  \Pi = \bigl\{\, \lambda_1 \mathbf b_1 + \lambda_2 \mathbf b_2 :
  \lambda_1 \in [0,1), \lambda_2 \in [0,1) \,\bigr\},
\end{math}
the cone can be \emph{tiled}:
\begin{equation}
  C = \bigcup_{\mathbf s\in S} (\mathbf s + \Pi)
  \quad\text{where}\quad
  S = \{\, \mu_1 \mathbf b_1 + \mu_2 \mathbf b_2 : (\mu_1,\mu_2)\in\Z_+^2 \,\}
\end{equation}
(a disjoint union).  Because we have chosen \emph{integral}
generators $\mathbf b_1, \mathbf b_2$, the integer points are ``the
same'' in each copy of the fundamental parallelepiped.  Therefore, also the
integer points of~$C$ can be tiled by copies of~$\Pi\cap\Z^2$;
on the other hand, we can see $C\cap\Z^2$ as a finite disjoint union of copies
of~$S$, shifted by the integer points of~$\Pi$:
\begin{equation}
  C\cap\Z^2
  = \bigcup_{\mathbf s\in S}
  \bigl({\mathbf s + (\Pi\cap \Z^2)}\bigr)
  = \bigcup_{\mathbf x\in \Pi\cap \Z^2} (\mathbf x + S).
  \label{eq:shifted-copies-of-S}
\end{equation}
The set~$S$ is just the image of
$\Z_+^2$ under the matrix $(\mathbf b_1, \mathbf b_2)=\smash{\bigl(\begin{smallmatrix}
  \alpha & \beta \\
  -1 & 1
\end{smallmatrix}\bigr)}$;
cf. Figure \ref{fig:2d-friendly-tiling}. 
Now $\Z_+^2$ is the direct product of~$\Z_+$ with itself,
whose generating function is the geometric series
\begin{math}
  g(\Z_+; z) = z^0 + z^1 + z^2 + z^3 + \dots = \frac1{1-z}.
\end{math}
We thus obtain the generating function as a product,
\begin{math}
  g(\Z_+^2; z_1,z_2)
  =  \frac1{1-z_1} \cdot \frac1{1-z_2}.
\end{math}
Applying the linear transformation~$(\mathbf b_1,\mathbf b_2)$, 
\begin{displaymath}
  g(S; z_1,z_2) = \frac1{(1-z_1^\alpha z_2^{-1})(1-z_1^\beta z_2^1)}.
\end{displaymath}
From 
\eqref{eq:shifted-copies-of-S} it is now clear that
\begin{math}
  g(C; z_1,z_2) = \sum_{\mathbf x\in\Pi\cap\Z^2} z_1^{x_1} z_2^{x_2} g(S;z_1,z_2);
\end{math}
the multiplication with the monomial $z_1^{x_1} z_2^{x_2}$ corresponds to the
shifting of the set~$S$ by the vector $(x_1, x_2)$.
In our example, it is easy to see that
\begin{math}
  \Pi\cap\Z^2 = \{\, (i, 0): i = 0, \dots, \alpha+\beta-1 \,\}.
\end{math}
Thus
\begin{displaymath}
  g(C;z_1,z_2) = \frac{z_1^0 + z_1^1 + \dots + z_1^{\alpha+\beta-2} +
  z_1^{\alpha+\beta-1} }{(1-z_1^\alpha z_2^{-1})(1-z_1^\beta z_2^1)}.
\end{displaymath}
Unfortunately, this formula has an exponential size as the numerator contains
$\alpha+\beta$ summands.
To make the formula shorter, we need to recursively break the cone into
``smaller'' cones, each of which have a much shorter formula.  We have observed
that the length of the formula is
determined by the number of integer points in the fundamental parallelepiped,
the \emph{index} of the cone.
\emph{Triangulations} usually do not help to reduce the
index significantly, as another two-dimensional example shows.  Consider the cone $C'$ generated by $\mathbf
b_1=(1,0)$ and $\mathbf b_2=(1,\alpha)$; see Figure \ref{fig:positive-decomp}.
\begin{figure}[t]
  \begin{center}
    (a)
    \begin{minipage}[b]{.28\linewidth}
      \ifpdf
    \input{positive-decomp-1.pdf_t}
    \else
    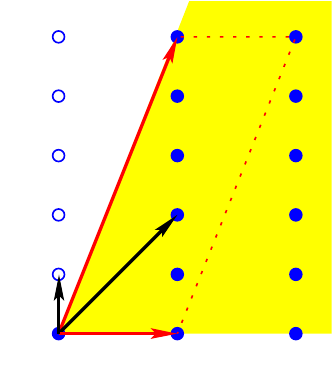
    \fi
    \end{minipage}\quad
    (b)
    \begin{minipage}[b]{.28\linewidth}
      \ifpdf
    \input{positive-decomp-2.pdf_t}
    \else
    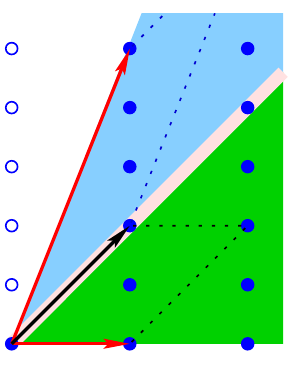
    \fi
    \end{minipage}\quad
    (c)
    \begin{minipage}[b]{.28\linewidth}
      \ifpdf
    \input{signed-decomp-3.pdf_t}
    \else
    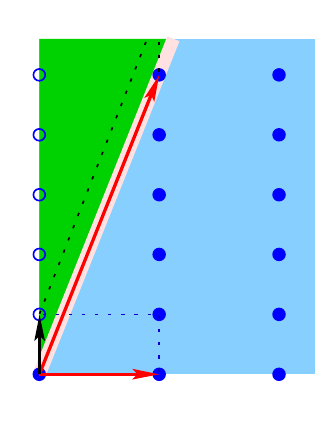
    \fi
    \end{minipage}
  \end{center}
  \caption[A triangulation and a signed decomposition of a cone]
  {\textbf{(a)} A cone  of index~$5$ generated by $\mathbf b^1$ and $\mathbf b^2$.
    \textbf{(b)}
    A triangulation of the cone into the two cones spanned by $\{\mathbf b^1,\mathbf w\}$ and $\{\mathbf
    b^2,\mathbf w\}$, having an index of $2$ and~$3$, respectively.  We have
    the inclusion-exclusion formula $g({\mathop{\mathrm{cone}}\{\mathbf b_1,\mathbf b_2\}};\mathbf z) =
    g({\mathop{\mathrm{cone}}\{\mathbf b_1,\mathbf w\}}; \mathbf z) + g({\mathop{\mathrm{cone}}\{\mathbf b_2,\mathbf w\}};\mathbf z) -
    g({\mathop{\mathrm{cone}}\{\mathbf w\}};\mathbf z)$;
    here the one-dimensional cone spanned by $\mathbf w$
    needed to be subtracted.
    \textbf{(c)} A signed decomposition into the two unimodular cones spanned by $\{\mathbf
    b^1,\mathbf w'\}$ and $\{\mathbf
    b^2,\mathbf w'\}$.  We have the inclusion-\penalty0 exclusion formula $g({\mathop{\mathrm{cone}}\{\mathbf b_1,\mathbf b_2\}};\mathbf z)
    = g({\mathop{\mathrm{cone}}\{\mathbf b_1,\mathbf w'\}};\mathbf z) - g({\mathop{\mathrm{cone}}\{\mathbf b_2,\mathbf w'\}};\mathbf z) +
    g({\mathop{\mathrm{cone}}\{\mathbf w'\}};\mathbf z)$.}
  \label{fig:positive-decomp}
  \label{fig:signed-decomp}
\end{figure}
We have
\begin{math}
  \Pi'\cap \Z^2 = \{(0,0)\} \cup \{\, (1,i) : i = 1,\dots,\alpha-1 \,\},
\end{math}
so the rational generating function would have $\alpha$ summands in the
numerator, and thus have exponential size.  Every attempt to use
triangulations to reduce the size of the formula fails in this example.  The
choice of an interior vector~$\mathbf w$ in Figure \ref{fig:positive-decomp}, for
instance, splits the cone of index~$5$ into two cones of index~$2$ and $3$,
respectively -- and also a one-dimensional cone.  Indeed, every possible
triangulation of~$C'$ into unimodular cones contains at least~$\alpha$
two-dimensional cones!
The important new idea by Barvinok was to use so-called \emph{signed
  decompositions} in addition to triangulations in order to reduce the index
of a cone.  In our example, we can
choose the vector $\mathbf w = (0,1)$ \emph{from the outside of the cone}
to define cones $C_1 = \mathop{\mathrm{cone}}\{\mathbf b_1,\mathbf w\}$ and $C_2 = \mathop{\mathrm{cone}}\{\mathbf w, \mathbf
b_2\}$; see Figure \ref{fig:signed-decomp}.
Using these cones, we have
the inclusion-exclusion formula
\begin{displaymath}
g(C';z_1,z_2) = g(C_1;z_1,z_2) - g(C_2;z_1,z_2) + g(C_1\cap C_2; z_1,z_2)
\end{displaymath}
It turns out that all cones $C_1$ and $C_2$ are unimodular,
and we obtain the rational generating function by summing up those of the subcones,
\begin{align*}
  g(C';z_1,z_2) &= \frac{1}{(1-z_1)(1-z_2)} - \frac{1}{(1-z_1^1 z_2^\alpha)
    (1-z_2)} + \frac{1}{1-z_1 z_2^\alpha}.
\end{align*}

\subsubsection{Barvinok's algorithm for short rational generating functions}

We now present the general definitions and results.  Let $P\subseteq\R^d$ be a
rational polyhedron.  We first define its \emph{generating function} as the
\emph{formal Laurent series} $\tilde g(P; \mathbf z) = \sum_{{\bm{\alpha}}\in
  P\cap\Z^d} \mathbf z^{{\bm{\alpha}}} \in \Z[[z_1,\dots,z_d,
z_1^{-1},\dots,z_d^{-1}]]$, i.e., without any consideration of convergence
properties.  (A formal \emph{power} series is not enough because monomials with
negative exponents can appear.) As we remarked above, this encoding of a set
of lattice points does not give an immediate benefit in terms of complexity.
We will get short formulas only when we can identify the Laurent series with
certain rational functions.
Now if $P$ is a polytope
, then $\tilde g(P; \mathbf z)$ is a
\emph{Laurent polynomial} (i.e., a \emph{finite}
sum of monomials with positive or negative integer exponents),
so it can be naturally identified with a
rational function~$g(P; \mathbf z)$.  
Convergence comes into play whenever $P$ is not bounded,
since then $\tilde g(P; \mathbf z)$ can be an infinite formal sum.
We first consider a \emph{pointed} polyhedron~$P$, i.e., $P$ does
not contain a straight line.
\begin{theorem}\label{th:laurent-series-to-ratfun}
  Let $P\subseteq\R^d$ be a pointed rational polyhedron.  Then there exists a
  non-empty open subset $U\subseteq\C^d$ such that the series~$\tilde g(P; \mathbf
  z)$ converges absolutely and uniformly on every compact subset of~$U$ to a
  rational function~$g(P; \mathbf z)\in\Q(z_1,\dots,z_d)$.
\end{theorem}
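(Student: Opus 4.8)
The plan is to reduce the general statement to the case of a simplicial cone, for which we already have the explicit tiling computation illustrated in the two-dimensional examples above, and then to reassemble the pieces. First I would record the base case. Let $K = v + \mathop{\mathrm{cone}}\{\mathbf b_1,\dots,\mathbf b_k\}$ be a pointed simplicial cone with integral generators $\mathbf b_i$ and rational apex $v$. Tiling $K$ by the integer translates of the half-open fundamental parallelepiped $\Pi=\{\sum_i\lambda_i\mathbf b_i:0\le\lambda_i<1\}$, exactly as in the two-dimensional examples, expresses the lattice points of $K$ as shifted copies of the finitely many points of $(v+\Pi)\cap\Z^d$, and yields
\[
  \tilde g(K;\mathbf z) = \frac{\sum_{\mathbf p\in (v+\Pi)\cap\Z^d}\mathbf z^{\mathbf p}}{\prod_{i=1}^k (1-\mathbf z^{\mathbf b_i})}.
\]
The numerator is a Laurent polynomial with integer exponents, so this lies in $\Q(z_1,\dots,z_d)$; and since each factor is a geometric series $\sum_{n\ge0}\mathbf z^{n\mathbf b_i}$, the product converges absolutely and compact-uniformly on the nonempty open set $U_K=\{\mathbf z:|\mathbf z^{\mathbf b_i}|<1,\ i=1,\dots,k\}$. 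Pointedness of $K$ is exactly what makes $U_K$ nonempty: there is a $\xi$ with $\langle\xi,\mathbf b_i\rangle>0$ for all $i$, and writing $\mathbf z=\exp(\zeta)$ with $\mathrm{Re}\,\zeta=-\xi$ gives $|\mathbf z^{\mathbf b_i}|=e^{-\langle\xi,\mathbf b_i\rangle}<1$.

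Next I would treat convergence of $\tilde g(P;\mathbf z)$ itself, directly from the geometry of $P$. Write $P=\mathop{\mathrm{conv}}(V)+\mathop{\mathrm{rec}}(P)$ with $V$ the finite vertex set. Because $P$ is pointed, its recession cone is pointed, so there is a functional $\xi$ with $\langle\xi,\mathbf r\rangle>0$ for every nonzero $\mathbf r\in\mathop{\mathrm{rec}}(P)$; consequently each sublevel set $\{\mathbf x\in P:\langle\xi,\mathbf x\rangle\le N\}$ is bounded and contains only polynomially many lattice points in $N$. Taking $\mathbf z=\exp(\zeta)$ with $\mathrm{Re}\,\zeta=-t\xi$, $t>0$, gives $\sum_{\bm{\alpha}\in P\cap\Z^d}|\mathbf z^{\bm{\alpha}}|=\sum_{\bm{\alpha}}e^{-t\langle\xi,\bm{\alpha}\rangle}<\infty$, and the same estimate yields absolute, compact-uniform convergence on the nonempty open set $U=\{\mathbf z=\exp(\zeta):\mathrm{Re}\,\zeta\in-\mathop{\mathrm{int}}(\mathop{\mathrm{rec}}(P)^{*})\}$. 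Thus $\tilde g(P;\mathbf z)$ already defines an analytic function on $U$, and it only remains to identify it with a rational function.

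For the identification I would decompose $P$ into simplicial cones. The clean tool is Brion's theorem (the $d$-dimensional analogue of the one-dimensional splitting seen earlier): $\tilde g(P;\mathbf z)=\sum_{v\in V}\tilde g(\mathop{\mathrm{tcone}}(P,v);\mathbf z)$, where each supporting tangent cone $\mathop{\mathrm{tcone}}(P,v)$ is pointed because $P$ is. Triangulating each tangent cone into simplicial cones and using inclusion--exclusion reduces every summand to the base case, so $\sum_{v}\tilde g(\mathop{\mathrm{tcone}}(P,v);\mathbf z)$ is manifestly a rational function $g(P;\mathbf z)\in\Q(z_1,\dots,z_d)$. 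Comparing this with the directly convergent series on a common open subset of $U$ then shows $\tilde g(P;\mathbf z)$ converges to $g(P;\mathbf z)$, as required.

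The main obstacle is the reconciliation of domains of convergence hidden in this last step. The simplicial pieces $K_i$ converge only on their own cones $U_{K_i}$, and for unbounded $P$ these need not overlap with one another or with $U$, so the equality $\tilde g(P)=\sum_i\pm g(K_i)$ cannot be read off termwise but must be established as an identity of \emph{rational functions}. The honest way to secure this is to invoke the valuation (finitely additive measure) property of the assignment $Q\mapsto\tilde g(Q)$ on the algebra of rational polyhedra --- the theorem of Lawrence and of Khovanskii--Pukhlikov that this map is well defined on indicator functions --- which makes Brion's identity and all inclusion--exclusion relations valid at the level of rational functions, independently of convergence. Granting that machinery the bookkeeping above is routine; without it, the termwise manipulation of divergent Laurent series is precisely the trap to avoid.
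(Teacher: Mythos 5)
The paper itself does not prove Theorem~\ref{th:laurent-series-to-ratfun}: it is imported as background from Barvinok's theory (see \cite{Barvinok94,BarviPom}), so there is no in-text argument to compare yours against. Judged on its own terms, your base case (tiling a simplicial cone by integer translates of the half-open fundamental parallelepiped) and your direct convergence argument on the open set determined by the interior of the dual of the recession cone are both correct and are exactly the standard ingredients; the estimate via bounded sublevel sets of a functional $\xi$ positive on $\mathop{\mathrm{rec}}(P)\setminus\{\mathbf 0\}$ is the right way to get absolute and compact-uniform convergence.

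The genuine gap is in the identification step, and you have half-diagnosed it yourself. You first observe, correctly, that the simplicial pieces coming from Brion plus triangulation need not share any domain of convergence with each other or with $U$; you then conclude by ``comparing this with the directly convergent series on a common open subset of $U$,'' which is precisely what that observation forbids. Appealing to the Lawrence--Khovanskii--Pukhlikov valuation does not close this: the valuation property only guarantees that the rational function $g(P;\mathbf z)$ obtained from any signed decomposition is well defined (independent of the decomposition); it does not by itself tell you that this rational function is the analytic sum of $\tilde g(P;\mathbf z)$ on $U$. (Indeed, the full statement of that theorem already contains Theorem~\ref{th:laurent-series-to-ratfun} as one of its clauses, so invoking it wholesale reduces your proof to a citation of the result being proved.) What is missing is the one concrete idea that makes the pointed case work: exhibit \emph{some} decomposition of the indicator $[P]$ into half-open simplicial cones all of whose recession cones lie in a common half-space $\{\mathbf x:\langle\xi,\mathbf x\rangle>0\}$ with $\xi\in\mathop{\mathrm{int}}(\mathop{\mathrm{rec}}(P)^{*})$ --- for instance by polarizing the triangulated tangent cones with respect to such a $\xi$ (the Lawrence--Varchenko decomposition), or by the half-open/irrational decompositions mentioned later in the paper \cite{koeppe:irrational-barvinok,Brion1997residue}. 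For such a decomposition every piece converges on a common nonempty open subset of $U$, the identity can be verified termwise there, and analytic continuation then identifies $\tilde g(P;\mathbf z)$ with $g(P;\mathbf z)$ on all of $U$. With that one addition your outline becomes a complete proof.
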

Finally, when $P$ contains an integer point and also a straight line, there
does not exist any point~$\mathbf z\in\C^d$ where the series $\tilde g(P; \mathbf z)$
converges absolutely
. In this case
we set
\begin{math}
  g(P;\mathbf z) = 0;
\end{math}
this turns out to be a consistent choice (making the map $P\mapsto g(P;\mathbf z)$ a
\emph{valuation}, i.e., a finitely additive measure).
  The rational function $g(P;\mathbf z)\in\Q(z_1,\dots,z_d)$ defined as described
  above is called the \emph{rational generating function} of~$P\cap\Z^d$.
\begin{figure}[t]
  \centering
  \hfil(a)\subfigure{\ifpdf
    \input{brion0.pdf_t}
    \else
    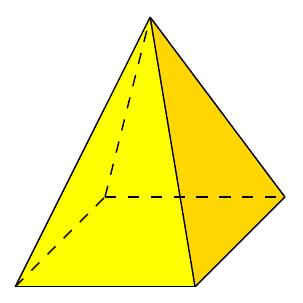
    \fi}
  \hfil(b)\subfigure{\ifpdf
    \input{brion.pdf_t}
    \else
    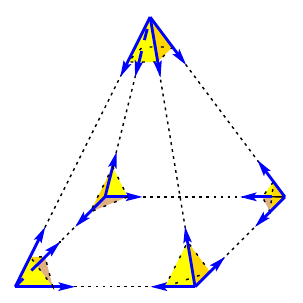
    \fi}
  \caption{Brion's theorem, expressing the generating function of a polyhedron
    to those of the supporting cones of all vertices}
  \label{fig:brion}
\end{figure}
\begin{theorem}[Brion \cite{Brion88}] \label{lem:bl}
  Let ${P}$ be a rational polyhedron
  and $V({P})$ be the set of vertices of~${P}$. Then,
  \begin{math}
    g({P};\mathbf{z}) = \sum_{\mathbf v \in V({P})} g({C}_{P}(\mathbf v);\mathbf{z}),
  \end{math}
  where ${C}_{P}(\mathbf v)=\mathbf v + \mathop{\mathrm{cone}}(P - \mathbf v)$ is the \emph{supporting
    cone} of the vertex~$\mathbf v$; see Figure \ref{fig:brion}.
\end{theorem}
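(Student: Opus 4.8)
The plan is to deduce Brion's formula from a single geometric identity among indicator functions, exploiting the two facts already at our disposal: that $\mathbf z\mapsto g(\,\cdot\,;\mathbf z)$ extends to a \emph{valuation} on the algebra generated by indicator functions of rational polyhedra, and that $g(Q;\mathbf z)=0$ whenever $Q$ contains a straight line. Because $g$ is a valuation, it may be applied term-by-term to any finite linear identity $\sum_j \epsilon_j\,\mathbf 1_{Q_j}=0$ of indicator functions; and because it annihilates line-containing polyhedra, every term whose polyhedron carries a line simply drops out. So it suffices to produce one such identity in which the only surviving terms are $\mathbf 1_P$ and the supporting cones $C_P(\mathbf v)$ at the vertices, each with coefficient $+1$.

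The identity I would use is the Brianchon--Gram relation
\[
  \mathbf 1_P \;=\; \sum_{\emptyset\neq F\preceq P} (-1)^{\dim F}\,\mathbf 1_{C_P(F)},
\]
where $F$ runs over all nonempty faces of $P$ and $C_P(F)=\mathbf w+\mathop{\mathrm{cone}}(P-\mathbf w)$ is the supporting cone along $F$ for any $\mathbf w$ in the relative interior of $F$, consistent with the definition given for vertices. The decisive observation is that $C_P(F)$ contains the whole linear subspace parallel to $\mathrm{aff}(F)$, of dimension $\dim F$; hence $C_P(F)$ contains a line as soon as $\dim F\geq 1$. Applying the valuation $g$ to the Brianchon--Gram identity therefore kills every face of positive dimension, and what remains is exactly
\[
  g(P;\mathbf z)=\sum_{\dim F=0} g(C_P(F);\mathbf z)=\sum_{\mathbf v\in V(P)} g(C_P(\mathbf v);\mathbf z),
\]
which is the assertion.

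It remains to justify the Brianchon--Gram identity, and this is where the real work lies. First I would dispose of the degenerate case: if $P$ contains a line it has no vertices, so both sides of Brion's formula vanish (the right-hand side as an empty sum, the left by the line-vanishing convention), and I may assume $P$ is pointed. For a polytope one proves Brianchon--Gram by the standard valuation/Euler-characteristic route: fix an arbitrary $\mathbf x\in\R^d$ and show the alternating count $\sum_{F}(-1)^{\dim F}\mathbf 1_{C_P(F)}(\mathbf x)$ equals $1$ for $\mathbf x\in P$ and $0$ otherwise. When $\mathbf x\in P$ every tangent cone contains $\mathbf x$, so the sum collapses to $\sum_{\emptyset\neq F}(-1)^{\dim F}=1$ by Euler's relation; when $\mathbf x\notin P$, the faces $F$ with $\mathbf x\in C_P(F)$ form a proper subcomplex of the boundary of $P$ whose reduced Euler characteristic is $0$.

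The main obstacle is precisely this last combinatorial-topological step: identifying $\{F:\mathbf x\in C_P(F)\}$ with a contractible (respectively spherical) subcomplex and computing its Euler characteristic uniformly over all positions of $\mathbf x$, including the measure-zero set of points lying on affine hulls of faces. A clean way to sidestep the boundary ambiguities is Brion's \emph{irrational-decomposition} trick --- perturbing the supporting hyperplanes to irrational positions so that no lattice point ever lies on a boundary --- after which the indicator identity becomes an honest signed, essentially disjoint decomposition and the bookkeeping is unambiguous. Either way, once Brianchon--Gram is in hand, the passage to generating functions via the valuation property is immediate, and the extension from polytopes to general pointed polyhedra follows the same pattern, the tangent cones at positive-dimensional faces again contributing nothing.
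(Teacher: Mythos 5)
The paper does not prove this theorem at all --- it is stated as a cited result of Brion and used as a black box in Barvinok's algorithm --- so there is no in-paper argument to compare against. Your proposal is the standard modern proof (due essentially to Lawrence and to Khovanskii--Pukhlikov, rather than Brion's original toric-variety argument): combine the Brianchon--Gram identity with the facts that $P\mapsto g(P;\mathbf z)$ is a valuation and that $g$ annihilates line-containing polyhedra, so that the tangent cones at all positive-dimensional faces (including $P$ itself) drop out and only the vertex cones survive. The overall architecture is correct, and you correctly identify the two load-bearing inputs: the valuation property (a nontrivial theorem in its own right, which the paper asserts without proof and which you are therefore entitled to quote) and the Brianchon--Gram identity itself.

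One detail in your verification of Brianchon--Gram is stated imprecisely. For $\mathbf x\notin P$, the collection $\{F : \mathbf x\in C_P(F)\}$ is \emph{not} a subcomplex of the boundary: it contains $P$ itself when $P$ is full-dimensional, and it is not closed under passing to subfaces (a vertex on the ``horizon'' can lie in a visible facet even though the facets it bounds on the far side are not visible). The clean bookkeeping goes through the complementary family: the proper faces $F$ with $\mathbf x\notin C_P(F)$ are exactly those contained in some facet whose inequality $\mathbf x$ violates, and these \emph{do} form a subcomplex, namely the part of $\partial P$ visible from $\mathbf x$, which is a polyhedral ball of Euler characteristic $1$. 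Subtracting from the Euler relation for the boundary sphere and adding back the term for $F=P$ then gives the required total of $0$. Since you flag this step as the crux and propose the irrational-perturbation route as an alternative, I would call this a repairable imprecision rather than a gap, but as written the subcomplex claim is false and the Euler-characteristic computation needs to be rerouted as above.
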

We remark that in the case of a non-pointed polyhedron~$P$, i.e., a polyhedron
that has no vertices because it contains a straight line, both sides of the
equation are zero.\smallbreak

Barvinok's algorithm computes the rational generating function of a
polyhedron~$P$ as follows.  By Brion's theorem,
the rational generating function of a
polyhedron can be expressed as the sum of the rational generating functions of
the supporting cones of its vertices.
Every supporting
cone $\mathbf v_i+C_i$ can be triangulated to obtain simplicial cones $\mathbf
v_i+C_{ij}$.
If the dimension is fixed, these polyhedral computations all run in polynomial
time.

Now let $K$ be one of these simplicial cones,
whose \emph{basis vectors} $\mathbf b_1,\dots,\mathbf b_d$ (i.e., primitive representatives
of its extreme rays) are
the columns of some matrix $B\in\Z^{d\times d}$;
then the index of $K$ is $\left|\det B\right|$.
Barvinok's algorithm now
computes a \emph{signed decomposition} of~$K$ to
produce simplicial cones with smaller index.  To this end, it
constructs a vector $\mathbf w=\alpha_1 \mathbf b_1 + \dots + \alpha_d \mathbf
b_d\in\smash{\Z^d\setminus\{\mathbf0\}}$ with
\begin{math}
  \mathopen|\alpha_i\mathclose| \leq \mathopen|\det B\mathclose|^{-1/d}.
\end{math}
The existence of such a vector follows from Minkowski's first theorem,
and it can be constructed in polynomial time using integer programming or lattice basis
reduction followed by enumeration.
The cone is then decomposed into cones spanned by $d$ vectors from the set
$\{\mathbf b_1,\dots,\mathbf b_d,\mathbf w\}$; each of the resulting cones then has an
index at most $(\mathop{\mathrm{ind}} K)^{(d-1)/d}$.  In general, these cones form a
signed decomposition of~$K$; only if $\mathbf w$ lies inside~$K$,
they form a triangulation (see Figure \ref{fig:positive-decomp}).
The resulting cones and their intersecting proper faces (arising in an
inclusion-exclusion formula) are recursively processed, until
cones of low index (for instance unimodular cones) are obtained.
Finally, for a unimodular cone $\mathbf v+B\R_+^d$, the rational generating
function is
\begin{math}
  {\mathbf z^{\mathbf a}}/{\prod_{j=1}^d (1-\mathbf z^{\mathbf b_j})},
\end{math}
where $\mathbf a$ is the unique integer point in the fundamental parallelepiped.
We summarize Barvinok's algorithm below.\medbreak

\begin{algorithm}[Barvinok's algorithm]~\smallskip\par
\label{algo:primal-barvi}
\noindent {\em Input:}
A polyhedron $P\subset\R^d$ given by rational inequalities.

\noindent {\em Output:}
The rational generating function for $P\cap\Z^d$ in the
form
\begin{equation}
  \label{eq:generating-function-0}
  g_P(\mathbf z) = \sum_{i\in I} \epsilon_i \frac{\mathbf
    z^{\mathbf a_i}}{\prod_{j=1}^d (1-\mathbf z^{\mathbf b_{ij}})}
\end{equation}
where $\epsilon_i\in\{\pm1\}$, $\mathbf a_i\in\Z^d$, and
$\mathbf b_{ij}\in\Z^d$.
\begin{enumerate}[\rm\ 1.]
\item Compute all vertices $\mathbf v_i$ and corresponding
  supporting cones $C_i$ of $P$.
\item Triangulate $C_i$ into simplicial cones $C_{ij}$, keeping track of all
  the intersecting proper faces.
\item Apply signed decomposition to the cones $\mathbf v_i + C_{ij}$
  to obtain unimodular cones $\mathbf v_i + C_{ijl}$, keeping track of all the
  intersecting proper faces.
\item Compute the unique integer point~$\mathbf a_i$ in the fundamental parallelepiped
  of every resulting cone~$\mathbf v_i+C_{ijl}$.
\item Write down the formula~\eqref{eq:generating-function-0}.
\end{enumerate}
\end{algorithm}
We remark that it is possible to avoid computations with the intersecting proper
faces of cones (step~2 of the algorithm) entirely, using techniques such as polarization, irrational
decomposition \cite{koeppe:irrational-barvinok}, or half-open decomposition
\cite{Brion1997residue,koeppe-verdoolaege:parametric}.\smallbreak

Due to the descent of the indices in the signed decomposition
procedure, 
the depth of the decomposition tree is at most
\begin{math}
  \bigl\lfloor{1 + \frac{ \log_2 \log_2 D }{\log_2 \frac{d}{d-1} } }\bigr\rfloor,
\end{math}
where $D=\mathopen|\det B\mathclose|$.
Because at each decomposition step at most $\mathrm O(2^d)$ cones are created
and the depth of the tree is doubly logarithmic in the index of the input
cone, Barvinok could obtain a polynomiality result \emph{in fixed dimension}:
\begin{theorem}[Barvinok \cite{Barvinok94}]
  Let $d$ be fixed.  There exists a polynomial-time algorithm for computing
  the rational generating function \eqref{eq:generating-function-0} of a polyhedron $P\subseteq\R^d$ given by rational
  inequalities.
\end{theorem}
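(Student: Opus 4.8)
The plan is to establish polynomiality by analyzing the algorithm sketched above, showing that in fixed dimension every stage runs in polynomial time and, crucially, that the signed-decomposition recursion produces only polynomially many cones. First I would invoke Brion's theorem (Theorem~\ref{lem:bl}) to reduce the computation of $g(P;\mathbf z)$ to summing the generating functions of the supporting cones $\mathbf v_i + \mathop{\mathrm{cone}}(P-\mathbf v_i)$ over the vertices $\mathbf v_i$ of~$P$. Since $d$ is fixed, a polyhedron defined by $m$ rational inequalities has only $\mathrm O(m^d)$ vertices (each being the intersection of some $d$ of the facet hyperplanes), all computable in polynomial time, so this step contributes only a polynomial factor. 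Each supporting cone is then triangulated into simplicial cones --- a polynomial-time polyhedral computation in fixed dimension --- reducing everything to the case of a simplicial cone $K = \mathbf v + B\R_+^d$ with $B\in\Z^{d\times d}$ and index $D = |\det B|$.

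The heart of the argument --- and the step I expect to be the main obstacle --- is the quantitative control of Barvinok's signed decomposition. Given $K$ with basis $\mathbf b_1,\dots,\mathbf b_d$, one seeks a nonzero integer vector $\mathbf w = \sum_{i} \alpha_i \mathbf b_i$ whose coordinates in the $\mathbf b$-basis satisfy $|\alpha_i| \le D^{-1/d}$. Existence follows from Minkowski's first theorem: the closed symmetric box $\{\mathbf x : |(B^{-1}\mathbf x)_i| \le D^{-1/d}\}$ is the image under $B$ of a box of volume $(2D^{-1/d})^d = 2^d D^{-1}$, hence has volume $2^d$ and contains a nonzero point of $\Z^d$; such a point can be found in polynomial time by lattice-basis reduction followed by bounded enumeration. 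Replacing the $i$-th generator of $K$ by $\mathbf w$ yields a cone whose index equals $|\alpha_i|\,D \le D^{(d-1)/d}$ by multilinearity of the determinant, so each of the $\mathrm O(2^d)$ cones (and their intersecting proper faces) produced in one decomposition step has index at most $(\mathop{\mathrm{ind}} K)^{(d-1)/d}$.

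It then remains to bound the total number of cones generated by the recursion. I would iterate the index estimate: after $k$ levels the index is at most $D^{((d-1)/d)^k}$, so unimodular cones are reached once $((d-1)/d)^k \log_2 D \le 1$, giving a decomposition tree of depth only
\[
  \Bigl\lfloor 1 + \tfrac{\log_2\log_2 D}{\log_2 \tfrac{d}{d-1}} \Bigr\rfloor,
\]
which is doubly logarithmic in $D$. Since each node has at most $\mathrm O(2^d)$ children, the total number of cones is bounded by $(2^d)^{\mathrm{depth}} = (\log_2 D)^{c(d)}$ for a constant $c(d)$ depending only on~$d$; as $\log_2 D$ is polynomial in the binary encoding length of $B$, this is polynomial in the input size. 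The delicate point is precisely that the exponential-in-$d$ branching is tamed by the doubly-logarithmic depth, which is exactly why fixing $d$ is essential --- without it this product would not be polynomial.

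Finally, for each resulting unimodular cone $\mathbf v + B'\R_+^d$ I would compute its unique fundamental-parallelepiped lattice point $\mathbf a$ (a single integer point, obtained in polynomial time) and write its generating function as $\mathbf z^{\mathbf a}/\prod_{j=1}^d (1-\mathbf z^{\mathbf b'_j})$, assembling the signed sum~\eqref{eq:generating-function-0} with signs $\epsilon_i\in\{\pm1\}$ recorded from the inclusion-exclusion bookkeeping. Tracking the intersecting proper faces throughout adds only polynomially many lower-dimensional cones in fixed dimension, and can be sidestepped entirely via polarization or half-open decompositions. Combining the polynomial vertex count, the polynomial per-step cost, and the polynomial total number of cones yields the claimed polynomial-time algorithm.
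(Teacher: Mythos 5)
Your proposal is correct and follows essentially the same route as the paper's own exposition: Brion's theorem, triangulation of supporting cones, Barvinok's signed decomposition via a Minkowski-type vector with the index dropping to $(\mathop{\mathrm{ind}} K)^{(d-1)/d}$, and the doubly-logarithmic recursion depth taming the $\mathrm O(2^d)$ branching in fixed dimension. The quantitative details you supply (volume computation for Minkowski, the multilinearity argument for the index of the replaced cone, and the $(2^d)^{\mathrm{depth}}=(\log_2 D)^{c(d)}$ count) are exactly the right fill-ins for the steps the paper only sketches.
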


\subsubsection{The FPTAS for polynomial optimization}
\label{s:actual-fptas}

We now describe the fully polynomial-time approximation scheme,
which appeared in \cite{deloera-hemmecke-koeppe-weismantel:intpoly-fixeddim,DeloeraHemmeckeKoeppeWeismantel06,deloera-hemmecke-koeppe-weismantel:mixedintpoly-fixeddim-fullpaper}.
It makes use of the elementary relation
\begin{equation}
  \max\{s_1, \dots, s_N \} = \lim_{k \rightarrow \infty} \sqrt[k]{s_1^k +
    \dots + s_N^k},
\end{equation}
which holds for any finite set $S=\{s_1,\dots,s_N\}$ of non-negative real
numbers.  This relation can be viewed as an approximation result for
$\ell_k$-norms.  Now if $P$ is a polytope and $f$ is an objective function
non-negative on $P\cap\Z^d$, let $\mathbf x^1,\dots,\mathbf x^{N}$ denote all the feasible
integer solutions in~$P\cap\Z^d$ and collect their objective function values
$s_i=f(\mathbf x^i)$
in a vector~$\mathbf s\in\Q^N$.  Then, comparing the unit balls of the $\ell_k$-norm
and the $\ell_\infty$-norm (Figure \ref{fig:lp-norms}), we get the relation
\begin{displaymath}
  L_k :=
  {{N}}^{-1/k} \mathopen\| \mathbf s \mathclose\|_k \leq {\mathopen\| \mathbf s \mathclose\|_\infty}
  \leq \mathopen\| \mathbf s\mathclose\|_k
  =: U_k.
\end{displaymath}%
\begin{figure}[t]
  \centering
  $k=1$
  \ifpdf
    \input{l1-norm.pdf_t}
    \else
    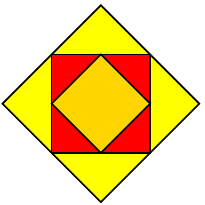
    \fi
  \qquad
  $k=2$
  \ifpdf
    \input{l2-norm.pdf_t}
    \else
    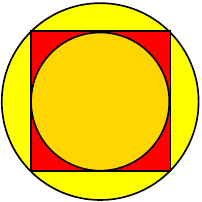
    \fi
  \caption{Approximation properties of $\ell_k$-norms}
  \label{fig:lp-norms}
\end{figure}%
Thus, for obtaining a good approximation of the maximum, it suffices to solve
a summation problem of the polynomial function $h=f^k$ on $P\cap\Z^d$ for a
value of~$k$ that is large enough. Indeed, for $k= \bigl\lceil{(1+1/\epsilon)\log
  {{N}}}\bigr\rceil$, we obtain $U_k - L_k\leq \epsilon f(\mathbf x^{\max})$.  On the other
hand, this choice of~$k$ is polynomial in the input size (because $1/\epsilon$
is encoded in unary in the input, and $\log N$ is bounded by a polynomial in
the binary encoding size of the polytope~$P$).  Hence, when the dimension~$d$ is
fixed, we can expand the polynomial function~$f^k$ as a list of
monomials in polynomial time.

Solving the summation problem can be accomplished using short rational
generating functions as follows.
Let $g(P;\mathbf z)$ be the rational generating function of~$P\cap\Z^d$,
computed using Barvinok's algorithm.
By symbolically applying differential operators to~$g(P;\mathbf z)$, we can
compute a short rational function
representation of the Laurent polynomial
\begin{math}
  g(P,h; \mathbf z) = \sum_{{\bm{\alpha}}\in P \cap \Z^d} h({\bm{\alpha}}) \mathbf z^{{\bm{\alpha}}},
\end{math}
where each monomial $\mathbf z^{{\bm{\alpha}}}$ corresponding to an integer
point ${\bm{\alpha}}\in P\cap\Z^d$ has a coefficient that is the
value~$h({\bm{\alpha}})$.
To illustrate this, consider again the generating
function of the interval~$P=[0,4]$,
\begin{displaymath}
  g_P(z) = z^0 + z^1 + z^2 + z^3 + z^4
  {= \frac1{1-z} - \frac{z^5}{1-z}}.
\end{displaymath}
We now apply the differential operator $ z \frac{\mathrm d}{\mathrm d
    z} $ and obtain
\begin{displaymath}
  \left(z \frac{\mathrm d}{\mathrm d z}\right) g_P(z)
  = {1} z^1 + {2} z^2 + {3} z^3 + {4} z^4
  {= \frac1{(1-z)^2} - \frac{-4z^5 + 5z^4}{(1-z)^2}}
\end{displaymath}
Applying the same differential operator again, we obtain
\begin{displaymath}
  \left(z \frac{\mathrm d}{\mathrm d z}\right) \left( z
    \frac{\mathrm d}{\mathrm d z} \right) g_P(z)
  = {1} z^1 + {4} z^2 + {9}
  z^3 + {16} z^4
  {= \frac{z + z^2}{(1-z)^{3}}
    - \frac{25z^5 -39z^6+16 z^7}{(1-z)^{3}}}
\end{displaymath}
We have thus evaluated the monomial function $h(\alpha)=\alpha^2$ for
$\alpha=0,\dots,4$; the results appear as the coefficients of the
respective monomials.  The same works for several variables, using the partial
differential operators $z_i \frac\partial{\partial z_i}$ for $i =1,\dots,d$.
In fixed dimension, the size of the rational function expressions occuring in the symbolic
calculation can be bounded polynomially.  Thus one obtains the following result.

\begin{theorem}
  \label{operators}
  \begin{enumerate}[\rm(a)]
  \item Let $h(x_1,\dots,x_d) = \sum_{{\bm{\beta}}}c_{\bm{\beta}} \mathbf x^{{\bm{\beta}}}
    \in\Q[x_1,\dots,x_d]$ be a polynomial.  Define the differential operator
    $$D_h = h\left(z_1\frac{\partial}{\partial z_1},\dots, z_d\frac{\partial}{\partial
        z_d}\right) = \sum_{{\bm{\beta}}} c_{{\bm{\beta}}}
    \left(z_1\frac{\partial}{\partial z_1}\right)^{\beta_1}\dots
    \left(z_d\frac{\partial}{\partial z_d}\right)^{\beta_d}.$$
    Then $D_h$ maps the generating function $g(P;\mathbf z) = \sum_{{\bm{\alpha}}\in P\cap\Z^d}
    \mathbf z^{{\bm{\alpha}}}$ to the weighted generating function
    $(D_h g)(\mathbf z) = g(P, h; \mathbf z) = \sum_{{\bm{\alpha}}\in P\cap\Z^d} h({\bm{\alpha}}) \mathbf z^{{\bm{\alpha}}}$.
  \item
    Let the dimension $d$ be fixed.  Let $g(P;\mathbf z)$ be the Barvinok representation of the
    generating function $\sum_{{\bm{\alpha}}\in P
      \cap \Z^d}\mathbf z^{{\bm{\alpha}}}$ of $P\cap\Z^d$. Let $h\in\Q[x_1,\dots,x_d]$ be a
    polynomial, given as a list of monomials with rational coefficients~$c_{{\bm{\beta}}}$
    encoded in binary and exponents~${\bm{\beta}}$ encoded in unary. We can compute in
    polynomial time a Barvinok representation $g(P,h;\mathbf z)$ for the
    weighted generating function $\sum_{{\bm{\alpha}}\in P \cap \Z^d}
    h({\bm{\alpha}}) \mathbf z^{{\bm{\alpha}}}.$
  \end{enumerate}
\end{theorem}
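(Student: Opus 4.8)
The plan splits along the two parts of the statement. For part~(a), the idea is that every monomial is a simultaneous eigenfunction of the operators $\theta_i := z_i\frac{\partial}{\partial z_i}$. I would first record the elementary identity $\theta_i\,\mathbf z^{\bm{\alpha}} = \alpha_i\,\mathbf z^{\bm{\alpha}}$, from which $\theta_1^{\beta_1}\cdots\theta_d^{\beta_d}\,\mathbf z^{\bm{\alpha}} = \bm{\alpha}^{\bm{\beta}}\,\mathbf z^{\bm{\alpha}}$ with $\bm{\alpha}^{\bm{\beta}}=\prod_{i=1}^d\alpha_i^{\beta_i}$, and hence, summing the monomials of $h$ against their coefficients $c_{\bm{\beta}}$, one gets $D_h\,\mathbf z^{\bm{\alpha}} = h(\bm{\alpha})\,\mathbf z^{\bm{\alpha}}$. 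Applying $D_h$ term-by-term to the formal Laurent series $\sum_{\bm{\alpha}\in P\cap\Z^d}\mathbf z^{\bm{\alpha}}$ then yields exactly $\sum_{\bm{\alpha}}h(\bm{\alpha})\,\mathbf z^{\bm{\alpha}}=g(P,h;\mathbf z)$. The one point needing justification is that applying $D_h$ to the \emph{rational function} $g(P;\mathbf z)$ agrees with this term-by-term action. Here I would invoke Theorem~\ref{th:laurent-series-to-ratfun}: on the open set $U$ where the series converges absolutely and uniformly on compacta, term-by-term differentiation is legitimate, so both sides agree as analytic functions on $U$; since $D_h$ carries rational functions to rational functions, the identity of rational functions extends from $U$ to all of $\C^d$.

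For part~(b), the plan is to show that $D_h$ can be applied to the Barvinok representation \eqref{eq:generating-function-0} while keeping every summand a short rational function, in polynomial time for fixed~$d$. I would analyze the effect of a single operator $\theta_\ell$ on a single term $\mathbf z^{\mathbf a}/\prod_{j=1}^d (1-\mathbf z^{\mathbf b_j})^{m_j}$. By the product rule, $\theta_\ell$ sends such a term to a rational function with the \emph{same} denominator factors $(1-\mathbf z^{\mathbf b_j})$, each multiplicity raised by one, i.e.\ denominator $\prod_j (1-\mathbf z^{\mathbf b_j})^{m_j+1}$, together with an explicit new numerator $N'$ (the numerator of the derivative put over the common denominator). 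Thus the class of short terms — allowing repeated binomial denominators and an explicit numerator polynomial — is closed under each $\theta_\ell$, and the denominator data never changes beyond its exponents.

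The heart of~(b), and the main obstacle, is to keep the numerator from blowing up: a monomial $\bm{\beta}$ of $h$ requires $\lvert\bm{\beta}\rvert_1\le\deg h$ successive applications of single operators $\theta_\ell$, and a naive product-rule count multiplies the number of numerator monomials by up to $2^d$ at each step, which would be exponential in $\deg h$. I would sidestep this by tracking the \emph{degree} rather than the monomial count: each $\theta_\ell$ raises the total degree of the numerator by at most $\Sigma := \sum_j \|\mathbf b_j\|_1$ (the cost of multiplying by $\prod_j(1-\mathbf z^{\mathbf b_j})$), so after all the applications the numerator has degree at most $\deg N_0 + \deg h\cdot\Sigma$. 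Since $\deg h$ is polynomial in the input (the exponents of $h$ are encoded in unary) and the $\mathbf b_j$ come from the given representation, this degree bound is polynomial. The key conversion is that in fixed dimension $d$, a Laurent polynomial whose exponents are bounded in $\ell_\infty$-norm by $D$ has only $O(D^d)$ monomials; hence the polynomial degree bound yields a \emph{polynomial} bound on the number of numerator monomials. A parallel, easier estimate controls coefficient bit-sizes, since each $\theta_\ell$ step scales coefficients by integers bounded by $(\max_j m_j)(\max_j\|\mathbf b_j\|_\infty)$ and the number of steps is polynomial.

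Finally I would assemble the result by summing over the polynomially many monomials $\bm{\beta}$ of $h$ (weighted by $c_{\bm{\beta}}$) and over the polynomially many Barvinok summands of $g(P;\mathbf z)$. Each pair contributes one short rational term of the form described above, all computable in polynomial time, and by part~(a) their total equals $g(P,h;\mathbf z)$. This produces the desired short (Barvinok-type) representation of the weighted generating function, establishing~(b).
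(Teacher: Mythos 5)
Your proof is correct and follows the same route the paper takes, namely applying the operators $z_i\partial/\partial z_i$ symbolically to the Barvinok representation; the paper only illustrates this by a one-dimensional example and asserts that ``the size of the rational function expressions occurring in the symbolic calculation can be bounded polynomially,'' whereas you actually supply that bound via the degree-growth estimate and the $O(D^d)$ monomial count in fixed dimension. No gaps; your justification of part~(a) via convergence on the open set $U$ of Theorem~\ref{th:laurent-series-to-ratfun} and your closure argument for the class of terms with repeated binomial denominators are exactly what a full writeup needs.
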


Thus, we can implement the following algorithm in polynomial time.

\begin{algorithm}[Computation of bounds for the optimal value]~\smallskip
\label{Algorithm}

\noindent {\em Input:} A rational convex polytope $P \subset \R^d$;
a polynomial objective function $f \in \Q[x_1,\dots,x_d]$ 
that is non-negative over $P\cap\Z^d$,
given as a list of monomials with rational coefficients~$c_{{\bm{\beta}}}$
encoded in binary and exponents~${\bm{\beta}}$ encoded in unary;
an index~$k$, encoded in unary.\smallskip

\noindent {\em Output:} A lower bound~$L_k$ and an upper bound~$U_k$ for the maximal
function value $f^*$ of $f$ over $P\cap\Z^d$.
The bounds $L_k$ form a nondecreasing, the bounds $U_k$ a nonincreasing
sequence of bounds that both reach~$f^*$ in a finite number of steps.

\begin{enumerate}[\rm1.]
\item  Compute a short rational function expression for
  the generating function $g(P;\mathbf z)=\sum_{{\bm{\alpha}}\in P\cap\Z^d} \mathbf
  z^{{\bm{\alpha}}}$.  Using residue techniques, compute $|P \cap
  \Z^d|=g(P;\mathbf 1)$ from $g(P;\mathbf z)$.

\item Compute the polynomial~$f^k$ from~$f$.

\item From the rational function $ g(P;\mathbf z)$
  compute the rational function representation of $g(P,f^k;\mathbf z)$ of
  $\sum_{{\bm{\alpha}}\in P \cap \Z^d} f^k({\bm{\alpha}}) \mathbf z^{\bm{\alpha}}$ by
  Theorem \ref{operators}. Using residue techniques, compute
\[
L_k:=\left\lceil{\sqrt[k]{g(P,f^k;\mathbf 1)/g(P;\ve1)}}\,\right\rceil\;\;\;\text{and}\;\;\;
U_k:=\left\lfloor{\sqrt[k]{g(P,f^k;\mathbf 1)}}\right\rfloor.
\]
\end{enumerate}
\end{algorithm}
Taking the discussion of the convergence of the bounds into consideration, one
obtains the following result.

\begin{theorem}[Fully polynomial-time approximation scheme]
  Let the dimension $d$ be fixed.  Let $P\subset\R^d$ be a rational convex polytope.
  Let $f$ be a polynomial with rational coefficients that is non-negative on
  $P\cap\Z^d$, given as a list of monomials with rational coefficients~$c_{{\bm{\beta}}}$
  encoded in binary and exponents~${\bm{\beta}}$ encoded in unary.
  \begin{enumerate}[\rm(i)]
\item Algorithm~\ref{Algorithm} computes the bounds $L_k$, $U_k$ in time polynomial in
  $k$, the input size of $P$ and $f$, and the total degree~$D$. The bounds
  satisfy the following inequality:
$$
U_k-L_k \leq f^* \cdot \left(\sqrt[k]{|P \cap \Z^d|}-1 \right).
$$
\item For $k=(1+1/\epsilon)\log({|P \cap \Z^d|})$ (a number bounded by a
  polynomial in the input size),
  $L_k$ is a $(1-\epsilon)$-approximation to the optimal value $f^*$ and it
  can be computed in time polynomial in the input size, the total
  degree~$D$, and $1/\epsilon$. Similarly, $U_k$ gives a
  $(1+\epsilon)$-approximation to $f^*$.

\item With the same complexity,  by iterated bisection of $P$, we can also find
  a feasible solution $\mathbf x_\epsilon\in P\cap\Z^d$ with
  \begin{math}
    \bigl|f(\mathbf x_\epsilon) - f^*\bigr| \leq \epsilon f^*.
  \end{math}
\end{enumerate}
\end{theorem}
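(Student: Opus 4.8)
The plan is to assemble the three parts out of the generating-function toolkit developed above, treating the elementary $\ell_k$-versus-$\ell_\infty$ estimate as the analytic engine and Barvinok's algorithm together with Theorem~\ref{operators} as the computational engine. Throughout write $N:=|P\cap\Z^d|$, let $\mathbf x^1,\dots,\mathbf x^N$ enumerate $P\cap\Z^d$, set $s_i:=f(\mathbf x^i)\ge 0$, collect $\mathbf s=(s_1,\dots,s_N)$, and put $f^*:=\max_i s_i=\|\mathbf s\|_\infty$.

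For part~(i) I would first verify polynomiality of Algorithm~\ref{Algorithm} step by step. Barvinok's algorithm produces $g(P;\mathbf z)$ in polynomial time in fixed dimension; expanding $f^k$ yields a polynomial of total degree $kD$, which in fixed dimension~$d$ has only $O((kD)^d)$ monomials, with unary exponents bounded by $kD$, so it is computed and stored in polynomial time; Theorem~\ref{operators}(b) then produces $g(P,f^k;\mathbf z)$ in polynomial time; and the two residue evaluations at $\mathbf z=\mathbf 1$ return $g(P;\mathbf 1)=N$ and $g(P,f^k;\mathbf 1)=\sum_i s_i^k=\|\mathbf s\|_k^k$ in polynomial time. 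The gap bound is then pure arithmetic: by the norm comparison established above, $N^{-1/k}\|\mathbf s\|_k\le f^*\le\|\mathbf s\|_k$, so the unrounded bounds satisfy $\sqrt[k]{g(P,f^k;\mathbf 1)/N}\le f^*\le\sqrt[k]{g(P,f^k;\mathbf 1)}$, and their difference equals $\sqrt[k]{g(P,f^k;\mathbf 1)/N}\,(\sqrt[k]{N}-1)\le f^*(\sqrt[k]{N}-1)$. The outer rounding only shrinks this gap (and, when $f$ is integer-valued on the lattice so that $f^*\in\Z$, preserves the bounding properties $L_k\le f^*\le U_k$), which gives the claimed inequality.

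For part~(ii) I would substitute the stated $k=\lceil(1+1/\epsilon)\log N\rceil$ into the gap bound. The elementary estimate $\ln(1+\epsilon)\ge \epsilon/(1+\epsilon)$ gives $\sqrt[k]{N}=\exp(\tfrac{\ln N}{k})\le 1+\epsilon$, hence $U_k-L_k\le\epsilon f^*$; combined with $L_k\le f^*\le U_k$ this yields $L_k\ge(1-\epsilon)f^*$ and $U_k\le(1+\epsilon)f^*$. Polynomiality of the running time follows because $\log N$ is polynomially bounded in the encoding size of $P$ and $1/\epsilon$ is given in unary, so $k$ is polynomially bounded and part~(i) applies.

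For part~(iii) the plan is an adaptive bisection over a bounding box of $P$, and the key observation — which I expect to be the main obstacle to state cleanly — is a monotonicity invariant for the lower bound. Splitting a region $Q$ by an integer hyperplane into $Q_1$ ($x_j\le m$) and $Q_2$ ($x_j\ge m+1$) gives $\sum_{Q}s_i^k=\sum_{Q_1}s_i^k+\sum_{Q_2}s_i^k$, so $N_Q\hat L_k(Q)^k=N_{Q_1}\hat L_k(Q_1)^k+N_{Q_2}\hat L_k(Q_2)^k$ exhibits $\hat L_k(Q)^k$ as a weighted average of the two children and forces $\max\{\hat L_k(Q_1),\hat L_k(Q_2)\}\ge \hat L_k(Q)$, where $\hat L_k$ denotes the unrounded lower bound. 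Thus always recursing into the child with the larger $\hat L_k$ keeps $\hat L_k$ nondecreasing, and after $O(d\log(\text{box width}))$ bisections — polynomially many, since $d$ is fixed — the region shrinks to a single lattice point $\mathbf x_\epsilon$, at which $\hat L_k$ equals $f(\mathbf x_\epsilon)$. Chaining the invariant back to the root gives $f(\mathbf x_\epsilon)\ge \hat L_k(P)\ge(1-\epsilon)f^*$, and since trivially $f(\mathbf x_\epsilon)\le f^*$ we conclude $|f(\mathbf x_\epsilon)-f^*|\le\epsilon f^*$; each bisection costs one application of the part~(i) machinery, so the total cost remains polynomial.
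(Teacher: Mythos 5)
Your proposal is correct and assembles exactly the ingredients the paper itself relies on (the $\ell_k$-versus-$\ell_\infty$ comparison, Barvinok's algorithm plus Theorem~\ref{operators}, and the choice $k=\lceil(1+1/\epsilon)\log N\rceil$ with $\ln(1+\epsilon)\ge\epsilon/(1+\epsilon)$), so it follows the same route the chapter sketches. Your part~(iii) monotonicity invariant for the unrounded lower bound under bisection is the standard mechanism from the cited source papers and correctly fills in the detail the chapter leaves implicit, and your parenthetical caveat about the ceiling possibly overshooting $f^*$ when $f^*\notin\Z$ is a genuine subtlety of the algorithm as stated rather than a flaw in your argument.
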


The mixed-integer case can be handled by \emph{discretization} of the continuous
variables.  We illustrate on an example that one needs to be careful
to pick a sequence of discretizations that actually converges.
Consider the mixed-integer \emph{linear} optimization problem depicted in
Figure~\ref{fig:example-slice-not-fulldim},
whose feasible region consists of the point $(\frac12,1)$ and the segment
$\{\,(x,0): x\in[0,1]\,\}$.  The unique optimal solution
is $x=\frac12$, $z=1$.  Now consider
the sequence of grid approximations 
where $x\in \frac1m \Z_{\geq0}$.  For even~$m$, the unique optimal solution to the
grid approximation is $x=\frac12$, $z=1$.  However, for odd~$m$, the unique
optimal solution is $x=0$, $z=0$.  Thus the full sequence of the optimal
solutions to the grid approximations does not converge because it has two limit
points; see Figure~\ref{fig:example-slice-not-fulldim}.
\begin{figure}[t]
  \begin{minipage}[t]{.3\linewidth}
    \ifpdf
    \input{miptriangle.pdf_t}
    \else
    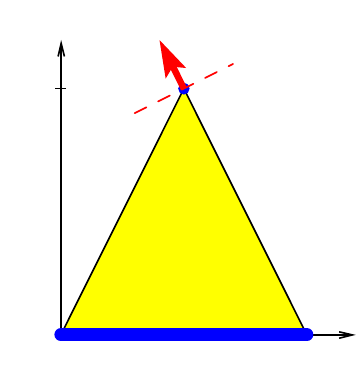
    \fi
  \end{minipage}
  \quad
  \begin{minipage}[t]{.3\linewidth}
    \ifpdf
    \input{miptriangle-even.pdf_t}
    \else
    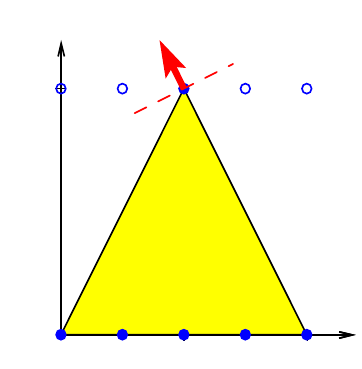
    \fi
  \end{minipage}
  \quad
  \begin{minipage}[t]{.3\linewidth}
    \ifpdf
    \input{miptriangle-11.pdf_t}
    \else
    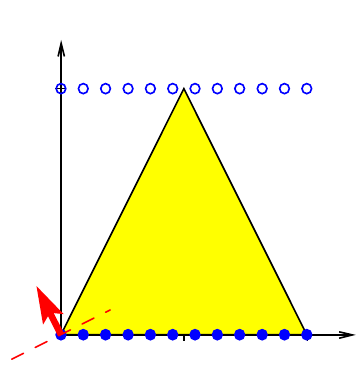
    \fi
  \end{minipage}
  \caption{A mixed-integer linear optimization problem and a sequence of
    optimal solutions to grid problems with two limit points, for even $m$
    and for odd $m$}
  \label{fig:example-slice-not-fulldim}
\end{figure}

To handle polynomial objective functions that take arbitrary (positive and
negative) values on the feasible region, one can shift the objective function
by a constant that is large enough.  Then, to obtain a strong approximation
result, one iteratively reduces the constant by a factor.
Altogether we have the following result.

 \begin{theorem}[Fully polynomial-time approximation schemes]
   \label{th:mipo-fptas}
   Let the dimension $n=n_1+n_2$ be fixed.
   Let an optimization problem~\eqref{eq:nonlinear-over-polyhedron} of
   a polynomial function~$f$ over the mixed-integer points of a polytope~$P$
   and an error bound~$\epsilon$ be given, where
   \begin{enumerate}[\quad\rm({I}$_\bgroup 1\egroup$)]
   \item $f$ is given as a list of
     monomials with rational coefficients~$c_{{\bm{\beta}}}$ encoded in binary and
     exponents~${\bm{\beta}}$ encoded in unary,
   \item $P$ is given by rational inequalities in binary encoding,
   \item the rational number~$\frac1\epsilon$ is given in unary encoding.
   \end{enumerate}
   \begin{enumerate}[\rm(a)]
   \item There exists a fully polynomial time approximation scheme ({\small
       FPTAS}) for the \emph{maximization} problem for all
     polynomial functions $f(\mathbf x,\allowbreak \mathbf z)$
     that are \emph{non-negative} on the feasible region.
     That is, there exists a polynomial-time algorithm that,
     given the above data,
     computes a feasible solution $(\mathbf x_\epsilon, \mathbf
     z_\epsilon) \in P\cap\bigl(\R^{n_1}\times \Z^{n_2}\bigr)$ with
     \begin{displaymath}
       \bigl|f(\mathbf
       x_\epsilon, \mathbf z_\epsilon) - f(\mathbf x_{\max}, \mathbf z_{\max})\bigr|
       \leq \epsilon  f(\mathbf x_{\max},\mathbf z_{\max}).
     \end{displaymath}
   \item
     There exists a polynomial-time algorithm that,
     given the above data,
     computes a feasible solution $(\mathbf x_\epsilon, \mathbf
     z_\epsilon) \in P\cap\bigl(\R^{n_1}\times \Z^{n_2}\bigr)$ with
     \begin{displaymath}
       \bigl|f(\mathbf
       x_\epsilon, \mathbf z_\epsilon) - f(\mathbf x_{\max}, \mathbf z_{\max})\bigr|
       \leq \epsilon \bigl| f(\mathbf x_{\max},\mathbf z_{\max}) - f(\mathbf x_{\min}, \mathbf z_{\min}) \bigr|.
     \end{displaymath}
   \end{enumerate}
 \end{theorem}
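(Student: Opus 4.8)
The plan is to reduce part~(b) to part~(a) by an objective shift, and to prove part~(a) by \emph{discretizing} the continuous block $\mathbf x\in\R^{n_1}$ so that the problem becomes a pure-integer one, to which the approximation scheme developed above (Algorithm~\ref{Algorithm}, justified by Theorem~\ref{operators}) applies verbatim in the fixed total dimension $n=n_1+n_2$.

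First I would establish part~(a). Choose a discretization parameter $m\in\Z_{>0}$ and replace $\mathbf x\in\R^{n_1}$ by $\mathbf x\in\frac1m\Z^{n_1}$; after the scaling $\mathbf x\mapsto m\mathbf x$ this is an honest integer program over the lattice points of a rational polytope in $\R^{n}$ whose encoding size is polynomial in the original, provided $\log m$ is polynomial. The pure-integer FPTAS then produces, with relative error $\tfrac\epsilon2$, a grid point together with matching lower and upper bounds. Two estimates must be controlled. For the discretization error: since $f$ is a polynomial of bounded degree with bounded-size rational coefficients over the bounded polytope $P$, its gradient is bounded on $P$ by some $\Lambda=2^{\mathrm{poly}}$, so $f$ is Lipschitz with constant $\Lambda$, and a grid point at distance $\delta$ from the true maximizer $(\mathbf x_{\max},\mathbf z_{\max})$ loses at most $\Lambda\delta$ in objective value. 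For the alignment: as the example in Figure~\ref{fig:example-slice-not-fulldim} shows, a careless grid can miss an optimal \emph{lower-dimensional} slice entirely. I would remedy this by taking $m$ divisible by the denominators of the vertices of all integer slices $P_{\bar{\mathbf z}}=\{\mathbf x:(\mathbf x,\bar{\mathbf z})\in P\}$; these denominators divide subdeterminants of the constraint matrix and are bounded by $2^{\mathrm{poly}}$, so such an $m$ exists with $\log m$ polynomial. With this alignment every vertex of the optimal slice is a grid point, and because the grid refines inside each rational affine hull as $m$ grows, with spacing $O(1/m)$, there is a grid point $(\mathbf x',\mathbf z_{\max})$ at distance $\delta=O(1/m)$. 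Taking $m$ additionally large enough that $\Lambda\delta\le\tfrac\epsilon2 f_{\max}$ makes the grid optimum at least $(1-\tfrac\epsilon2)f_{\max}$, and composing the two relative errors yields the desired $(1-\epsilon)$-approximation; the feasible solution itself is then recovered by the iterated bisection of part~(iii) of the pure-integer scheme.

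Next, for part~(b) with $f$ of indefinite sign, I would first compute, by bounding the polynomial over the bounded polytope, crude valid bounds $\alpha\le f_{\min}\le f_{\max}\le\beta$ of polynomial encoding size. Applying part~(a) to the non-negative shift $f-\alpha$ yields a point whose objective deficiency is at most $\epsilon'(f_{\max}-\alpha)$. To replace $f_{\max}-\alpha$ by the sharper $f_{\max}-f_{\min}$, I would iterate, reducing the shift by a constant factor each round: each round uses the current approximate extreme values to tighten $\alpha$ toward $f_{\min}$, shrinking the slack $f_{\max}-\alpha$ geometrically. After a number of rounds polynomial in the encoding size the shift is within a constant factor of $f_{\min}$, and a final call of part~(a) with error parameter $\epsilon$ scaled by that constant delivers the additive guarantee $|f(\mathbf x_\epsilon,\mathbf z_\epsilon)-f_{\max}|\le\epsilon|f_{\max}-f_{\min}|$.

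I expect the main obstacle to be the interplay in part~(a) between \emph{relative} error and discretization: the Lipschitz estimate is intrinsically absolute, so turning $\Lambda\delta\le\tfrac\epsilon2 f_{\max}$ into a polynomial bound on $\log m$ requires a lower bound $f_{\max}\ge 2^{-\mathrm{poly}}$ on the (positive) optimal value, which holds by standard root-separation and value bounds for polynomials with controlled data in fixed dimension. Ensuring simultaneously that $m$ meets every lower-dimensional slice and is fine enough, while keeping $\log m$ polynomial, is the delicate point; once it is settled, the remaining estimates and the reduction of part~(b) are routine.
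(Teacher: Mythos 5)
Your proposal follows essentially the same route as the paper, which itself only sketches the argument: discretize the continuous variables (taking care, exactly as your alignment step does, to avoid the divergent-grid pathology of Figure~\ref{fig:example-slice-not-fulldim}), invoke the pure-integer FPTAS of Algorithm~\ref{Algorithm}, and handle arbitrary-sign objectives in part~(b) by shifting the objective and iteratively reducing the shift constant by a factor. The details you fill in (Lipschitz/denominator bounds, lower bound on $f_{\max}$) are consistent with the full treatment the paper delegates to \cite{deloera-hemmecke-koeppe-weismantel:mixedintpoly-fixeddim-fullpaper}.
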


\subsection{Semi-algebraic sets and SOS programming}
\label{s:sos-programming}

In this section we use results from algebraic geometry over the reals
to provide a convergent (and in the case of binary optimization,
finite) sequence of \index{semi-definite!relaxation}semi-definite
relaxations for the general polynomial optimization
problem\index{optimization problem!polynomial} over semi-algebraic
sets.
\begin{equation}
\label{eq:nonlinear}
\begin{array}{rl}
Z^*={\rm minimize} & f(\mathbf x) \vspace{3pt} \\
\mbox{ s.t. } & g_i(\mathbf x) \geq \mathbf0 ,~~~i=1,\ldots ,m,  \vspace{3pt} \\
& \mathbf x\in \R^n,
\end{array}
\end{equation}
where $f,g_i\in \R[\mathbf x]$ are polynomials defined as:
$$
f(\mathbf x) = \sum_{{{\bm{\alpha}}}\in\Z^n_+} f_{{{\bm{\alpha}}}} \mathbf x^{{{\bm{\alpha}}}},\qquad
g_i (\mathbf x) = \sum_{{{\bm{\alpha}}}\in\Z^n_+} g_{i,{{\bm{\alpha}}}} \mathbf x^{{{\bm{\alpha}}}},
$$
where there are only finitely many nonzero coefficients
$f_{{{\bm{\alpha}}}}$ and $g_{i,{{\bm{\alpha}}}}$. Moreover, let
$K=\{\mathbf x\in \R^n \mid g_i(\mathbf x) \geq 0 ,~i=1,\ldots ,m\}$ denote the set of
feasible solutions. Note that problem (\ref{eq:nonlinear}) can model
binary optimization, by taking $f(\mathbf x)=\mathbf c^\top \mathbf x,$
and taking as the polynomials $g_i(\mathbf x)$,
$\mathbf a_i^\top \mathbf x- b_i$, $x_j^2-x_j$ and $-x_j^2+x_j$ (to model
$x_j^2-x_j=0$). Problem (\ref{eq:nonlinear}) can also model bounded
integer optimization (using for example the equation
$(x_j-l_j)(x_j-l_j+1)\cdot\ldots\cdot(x_j-u_j)=0$ to model $l_j\leq
x_j\leq u_j$), as well as bounded mixed-integer nonlinear
optimization. Problem (\ref{eq:nonlinear}) can be written as:
\begin{equation}
\label{eq:sos2}
\begin{array}{rl}
{\rm maximize} & \gamma \vspace{3pt}\\
{\rm  s.t. } & f(\mathbf x)-\gamma \geq 0,~~\forall \; \mathbf x\in K.
\end{array}
\end{equation}
This leads us to consider conditions for polynomials
to be nonnegative over a set $K$.

\begin{definition}
  Let $p\in\R[\mathbf x]$ where $\mathbf x=(x_1,  \ldots,x_n)^\top $.
  The polynomial $p$ is called \emph{sos} (sum of squares), if there
  exist polynomials $h_1,\ldots,h_k \in \R[\mathbf x]$ such that $p =
  \sum_{i=1}^k h_i^2$.
\end{definition}

Clearly, in multiple dimensions if a polynomial can be written as a
sum of squares of other polynomials, then it is nonnegative. However,
is it possible for a polynomial in higher dimensions to be
nonnegative without being a sum of squares? The answer is yes. The
most well-known example is probably the Motzkin-polynomial
$M(x,y,z)=x^4y^2+x^2y^4+z^6-3x^2y^2z^2$, which is nonnegative without
being a sum of squares of polynomials.

The following theorem establishes a certificate of positivity of a
polynomial on the set $K$, under a certain assumption on $K$.

\begin{theorem}[\cite{Putinar93} \cite{JacobiPrestel01}]
\label{thm:putinar}
Suppose that the set $K$ is compact and there exists a polynomial
$h(\mathbf x)$ of the form
$$h(\mathbf x)=h_0(\mathbf x)+\sum_{i=1}^m h_i(\mathbf x) g_i(\mathbf x),$$
such that $\{\mathbf x\in \R^n\mid h(\mathbf x)\geq 0\}$ is compact and $h_i(\mathbf x)$,
$i=0,1,\ldots,m$, are polynomials that  have a sum of squares
representation. Then, if the polynomial $g$ is strictly positive over $K$, then there exist
$p_i \in \R[\mathbf x]$, $i=0,1,\ldots,m$,  that are sums of squares such that
\begin{equation}
\label{eq:represe} g(\mathbf x)=p_0(\mathbf x)+\sum_{i=1}^m p_i(\mathbf x) g_i(\mathbf x).
\end{equation}
\end{theorem}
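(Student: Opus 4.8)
The plan is to show that $g$ lies in the quadratic module generated by the constraints, since membership is exactly the representation~\eqref{eq:represe}. Write
$M = \{\, p_0 + \sum_{i=1}^m p_i g_i : p_0,\dots,p_m \text{ are sos}\,\}$
for this module; it is a convex cone in $\R[\mathbf x]$ that contains every square, is closed under addition, and is closed under multiplication by squares. By hypothesis $h = h_0 + \sum_i h_i g_i \in M$, and each $g_i\in M$ (take coefficient $1$ on $g_i$ and $0$ elsewhere). The goal is to prove $g\in M$.

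First I would record the \emph{Archimedean property}: there is an integer $N$ with $N - \|\mathbf x\|^2 \in M$. This is the point at which the assumption that $\{\mathbf x : h(\mathbf x)\ge 0\}$ is compact (together with $h\in M$) is used, and it is the ingredient supplied by Jacobi--Prestel~\cite{JacobiPrestel01}: boundedness of the nonnegativity set of a member of $M$ forces $M$ to be Archimedean. I would treat this as the qualitative input and then run Putinar's functional-analytic argument~\cite{Putinar93}. Next, argue by contradiction: suppose $g>0$ on $K$ but $g\notin M$. A Hahn--Banach-type separation of the point $g$ from the convex cone $M$ yields a nonzero linear functional $L:\R[\mathbf x]\to\R$ with $L\ge 0$ on $M$ and $L(g)\le 0$; here the Archimedean property is what makes $1$ an order unit and forces $L(1)>0$ (otherwise $N\pm p\in M$ would give $L\equiv 0$), so I normalize $L(1)=1$.

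The technical heart is then to realize $L$ as integration against a probability measure $\mu$ on $\R^n$ with $\operatorname{supp}\mu\subseteq K$. Since $L(pq)$ defines a positive semidefinite symmetric form, I would pass to the quotient by its kernel and complete to a Hilbert space $H$ on which the multiplication operators $X_i:p\mapsto x_i p$ act. The relation $N-\|\mathbf x\|^2\in M$ gives $\sum_i L(x_i^2 p^2)\le N\,L(p^2)$, which bounds the $X_i$ and makes them commuting bounded self-adjoint operators; the multivariate spectral theorem then produces a joint spectral measure whose value on the cyclic vector $1$ is the desired $\mu$, with $L(p)=\int p\,d\mu$ for all $p$. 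Because each $g_i\in M$, one has $\int g_i p^2\,d\mu\ge 0$ for all $p$, whence $g_i\ge 0$ $\mu$-almost everywhere and thus $\operatorname{supp}\mu\subseteq\{g_i\ge 0\}$; intersecting over $i$ gives $\operatorname{supp}\mu\subseteq K$.

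Finally I would conclude: $L(g)=\int_K g\,d\mu>0$, since $g>0$ on $K$ and $\mu$ is a nonzero probability measure supported on $K$, contradicting $L(g)\le 0$. Hence $g\in M$, which is precisely~\eqref{eq:represe}. The main obstacle I expect is this third step --- deriving operator boundedness from the Archimedean condition and invoking the spectral theorem to manufacture the representing measure; the other delicate point is establishing the Archimedean property itself in the first step, where compactness of $\{h\ge 0\}$ must be converted into an explicit certificate $N-\|\mathbf x\|^2\in M$.
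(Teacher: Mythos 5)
The paper does not prove Theorem~\ref{thm:putinar}; it is quoted as a known result with citations to \cite{Putinar93} and \cite{JacobiPrestel01}, so there is no in-paper argument to compare against. Your outline is a correct reconstruction of the classical proof from those sources: quadratic module $M$, Archimedean property, Hahn--Banach/Eidelheit separation, GNS construction with bounded commuting multiplication operators, spectral theorem yielding a representing probability measure supported in $K$, and the final contradiction $L(g)=\int_K g\,d\mu>0$. You have also correctly located the two places where genuine work is hidden: (i) converting compactness of $\{\mathbf x : h(\mathbf x)\ge 0\}$ for some $h\in M$ into the certificate $N-\|\mathbf x\|^2\in M$ (this is precisely the Jacobi--Prestel contribution and is not a formality), and (ii) the separation step, where what is really needed is that the Archimedean property makes $1$ an algebraic interior point of $M$ (since $N\pm p\in M$ gives $1+\epsilon p\in M$ for $|\epsilon|\le 1/N$), which is what licenses the Eidelheit-type separation in the untopologized space $\R[\mathbf x]$ --- you invoke the order-unit property only for the normalization $L(1)>0$, but it also underwrites the existence of $L$ itself. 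With those two ingredients granted, the rest of your sketch (boundedness $\|X_i\|\le\sqrt N$ from $N-\|\mathbf x\|^2\in M$, $\operatorname{supp}\mu\subseteq\{g_i\ge 0\}$ from $\int g_i p^2\,d\mu\ge 0$ via density of polynomials on the compact joint spectrum) is sound.
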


Note that the number of terms in Equation (\ref{eq:represe}) is linear.
While the assumption of Theorem \ref{thm:putinar} may seem
restrictive, it is satisfied in several cases:
\begin{itemize}
\item[{\bf (a)}]\ For binary optimization problems\index{optimization
  problem!binary}, that is, when $K$ includes the inequalities
  $x^2_j\geq x_j$ and $x_j\geq x^2_j$ for all $j=1,\ldots,n$.
\item[{\bf (b)}]\ If all the $g_j$'s are linear, i.e., $K$ is a polyhedron.
\item[{\bf (c)}]\ If there is one polynomial $g_k$ such that the
  set $\{\mathbf x\in\R^n\mid g_k(\mathbf x)\geq 0\}$  is compact.
\end{itemize}

More generally, one way to ensure that the assumption of Theorem
\ref{thm:putinar} holds is to add to  $K$ the extra quadratic
constraint $g_{m+1}(\mathbf x)=a^2-\| \mathbf x\|^2\geq 0$ for some $a$ sufficiently
large. It is also important to emphasize that we do  not assume that
$K$ is convex. Notice that it may even be disconnected.

Let us now investigate algorithmically when a polynomial is a sum of
squares. As we will see this question is strongly connected to
semi-definite optimization. The idea of using
\index{semi-definite!optimization}semi-definite optimization for solving
optimization problems over polynomials is due to \cite{Shor87} and
further expanded in \cite{Lasserre01} and \cite{Parrilo03}.
We consider the vector
\[
\mathbf v_d(\mathbf x) = (\mathbf x^{{{\bm{\alpha}}}})_{|{{\bm{\alpha}}} |\leq d}=(1,x_1,\ldots,x_n,x_1^2,x_1x_2,\ldots , x_{n-1}x_n, x_n^2,\ldots, x_1^d,\ldots
,x_n^d)^\top,
\]
of all the monomials $\mathbf x^{{{\bm{\alpha}}}}$ of degree less than or equal to $d$,
which has dimension $s=\sum_{i=0}^d {n \choose i}={n+d \choose d}.$
\begin{proposition} [\cite{ChoiLamReznick:1995}]
\label{prop:sdprepresentation} \index{sum of squares} The  polynomial
$g(\mathbf x)$ of degree $2d$  has a \index{sum of squares!decomposition}sum
of squares decomposition if and only if there exists a positive
semi-definite matrix $Q$ for which $g(\mathbf x)=\mathbf v_d(\mathbf x)^\top Q \mathbf v_d(\mathbf x)$.
\end{proposition}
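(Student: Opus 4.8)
The plan is to prove both implications by elementary linear algebra, exploiting the fact that a real symmetric matrix is positive semi-definite precisely when it can be written as a sum of rank-one squares.

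For the direction ``$Q$ exists $\Rightarrow$ $g$ is sos'', I would assume $g(\mathbf x) = \mathbf v_d(\mathbf x)^\top Q\, \mathbf v_d(\mathbf x)$ with $Q\in\R^{s\times s}$ symmetric and positive semi-definite, where $s=\binom{n+d}{d}$. Since $Q\succeq 0$, it admits a factorization $Q=\sum_{i=1}^k \mathbf q_i \mathbf q_i^\top$ (for instance via the spectral decomposition $Q=\sum_i\lambda_i \mathbf u_i \mathbf u_i^\top$ with $\lambda_i\ge0$, setting $\mathbf q_i=\sqrt{\lambda_i}\,\mathbf u_i$, or a Cholesky-type factorization $Q=LL^\top$). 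Substituting yields
\[
g(\mathbf x)=\sum_{i=1}^k \bigl(\mathbf q_i^\top \mathbf v_d(\mathbf x)\bigr)^2 = \sum_{i=1}^k h_i(\mathbf x)^2,
\]
where each $h_i(\mathbf x):=\mathbf q_i^\top \mathbf v_d(\mathbf x)$ is a polynomial, namely a linear combination of monomials of degree at most $d$. Hence $g$ is a sum of squares.

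For the converse ``$g$ is sos $\Rightarrow$ $Q$ exists'', I would write $g=\sum_{i=1}^k h_i^2$. The crucial preliminary step is to establish $\deg h_i\le d$ for every $i$. To see this, set $D=\max_i \deg h_i$ (the degenerate case $g\equiv0$ being handled by $Q=0$), and examine the top-degree homogeneous parts $h_i^{\mathrm{top}}$. Some $h_i$ attains degree $D$, so its leading form $h_i^{\mathrm{top}}$ is nonzero; since a finite sum of squares of real polynomials vanishes identically only when every summand does (each real evaluation gives a sum of nonnegative terms equal to zero), the degree-$2D$ component $\sum_i (h_i^{\mathrm{top}})^2$ of $g$ does not vanish. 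Therefore $2D=\deg g=2d$, giving $D=d$, so no $h_i$ exceeds degree $d$. Consequently each $h_i$ lies in the span of the monomials collected in $\mathbf v_d$, and I may write $h_i(\mathbf x)=\mathbf c_i^\top \mathbf v_d(\mathbf x)$ for some coefficient vector $\mathbf c_i\in\R^s$. Then
\[
g(\mathbf x)=\sum_{i=1}^k \mathbf v_d(\mathbf x)^\top \mathbf c_i \mathbf c_i^\top \mathbf v_d(\mathbf x) = \mathbf v_d(\mathbf x)^\top \Bigl(\sum_{i=1}^k \mathbf c_i \mathbf c_i^\top\Bigr)\mathbf v_d(\mathbf x),
\]
and $Q:=\sum_{i=1}^k \mathbf c_i \mathbf c_i^\top$ is symmetric and positive semi-definite as a sum of rank-one squares, which proves the claim.

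The main obstacle I anticipate is the degree bound $\deg h_i\le d$, together with the subtlety that the representation $g=\mathbf v_d^\top Q\,\mathbf v_d$ is \emph{not} unique: the outer product $\mathbf v_d \mathbf v_d^\top$ has repeated entries (for example $x_ix_j$ occurs in several positions), so matching coefficients of $g$ against $\mathbf v_d^\top Q\,\mathbf v_d$ constrains only certain symmetric sums of the entries of $Q$ rather than $Q$ itself. This non-uniqueness is precisely what turns the existence question into a genuine semi-definite feasibility problem rather than a trivial identity. For the equivalence being proved, however, it is harmless, since in each direction I exhibit one explicit $Q$ (or one explicit sos decomposition) and verify the stated identity by direct expansion, with no appeal to uniqueness.
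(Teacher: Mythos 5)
Your proof is correct and follows essentially the same route as the paper: factor $Q=HH^\top$ (equivalently $\sum_i \mathbf q_i\mathbf q_i^\top$) to read off the squares in one direction, and assemble $Q=\sum_i \mathbf c_i\mathbf c_i^\top\succeq 0$ from the coefficient vectors of the $h_i$ in the other. Your explicit justification that $\deg h_i\le d$ (via the non-cancellation of the top-degree forms in a sum of squares) is a detail the paper's proof silently assumes, and it is a worthwhile addition.
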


\proof
Suppose there exists an $s\times s $ matrix $Q\succeq 0$ for which
$g(\mathbf x)=\mathbf v_d(\mathbf x)^\top Q \mathbf v_d(\mathbf x)$. Then $Q=HH^\top$ for some
$s\times k$ matrix $H$, and thus, $$g(\mathbf x)=\mathbf v_d(\mathbf x)^\top HH^\top
\mathbf v_d(\mathbf x)=\sum_{i=1}^k (H^\top \mathbf v_d(\mathbf x))_i^2.$$
Because $(H^\top \mathbf v_d(\mathbf x))_i$ is a polynomial, then  $g(\mathbf x)$ is expressed as  a sum of
squares of the polynomials $(H^\top \mathbf v_d(\mathbf x))_i$.

Conversely, suppose that $g(\mathbf x)$ has a sum of squares decomposition
$g(\mathbf x)=\sum_{i=1}^{\ell} h_i(\mathbf x)^2.$ Let $\mathbf h_i$ be the vector
of coefficients of the polynomial $h_i(\mathbf x)$, i.e.,
$h_i(\mathbf x)=\mathbf h_i^\top \mathbf v_d(\mathbf x).$ Thus,
$$g(\mathbf x)=\sum_{i=1}^{\ell} \mathbf v_d(\mathbf x)^\top \mathbf h_i  \mathbf h_i^\top \mathbf v_d(\mathbf x)=
\mathbf v_d(\mathbf x)^\top Q \mathbf v_d(\mathbf x),$$
with $Q=\sum_{i=1}^{\ell}  \mathbf h_i  \mathbf h_i^\top \succeq 0$, and the
proposition follows. \qed

Proposition \ref{prop:sdprepresentation} gives rise to an algorithm.
Given a polynomial $f(\mathbf x) \in \R[x_1,\ldots,x_n]$ of degree $2d$.
In order to compute the minimum value $f^*= \min\{f(\mathbf x) \mid \mathbf x \in
\R^n\}$ we introduce an artificial variable $\lambda$ and determine
$$\max\{\lambda \mid \lambda \in \R ,\; f(\mathbf x) - \lambda \geq 0\}.$$
With the developments above, we realize that we can
determine a lower bound for $f^*$ by computing the value
$$p^{sos}\;=\; \max\{\lambda \mid \lambda \in \R ,\; f(\mathbf x) - \lambda
\mbox{ is sos}\}\; \leq\; f^*.$$
The latter task can be accomplished by setting up a semi-definite
program. In fact, if we denote by $f_{{\bm{\alpha}}}$ the coefficient of the
monomial $\mathbf x^{{\bm{\alpha}}}$ in the polynomial $f$, then $f(\mathbf x) - \lambda
\mbox{ is sos}$ if  and only if there exists an $s\times s $ matrix
$Q\succeq 0$ for which $f(\mathbf x)-\lambda=\mathbf v_d(\mathbf x)^\top Q \mathbf v_d(\mathbf x)$. Now
we can compare the coefficients on both sides of the latter
equation. This leads to the SOS-program
$$\begin{array}{llllllll}
&p^{sos}& = & \max        &  \lambda\\
&       &   & \mbox{ st. }&  f_{\mathbf 0} - \lambda & = &  Q_{{\mathbf 0},{\mathbf 0}}\\
&       &   &             & \sum_{{\bm{\beta}},{\bm{\gamma}},\;{\bm{\beta}}+{\bm{\gamma}} ={\bm{\alpha}}} Q_{{\bm{\beta}},{\bm{\gamma}}}& = &  f_{{\bm{\alpha}}}\\
&       &   &             & Q= ( Q_{{\bm{\beta}},{\bm{\gamma}}})_{{\bm{\beta}},{\bm{\gamma}}}& \succeq & 0.
\end{array}$$

In a similar vein, Theorem \ref{thm:putinar} and Proposition
\ref{prop:sdprepresentation} jointly imply that we can use
\index{semi-definite!optimization}semi-definite optimization to
provide a sequence of semi-definite relaxations for the optimization
problem (\ref{eq:sos2}). Assuming that the set $K$ satisfies the
assumption of Theorem \ref{thm:putinar}, then if $f(\mathbf x)-\gamma > 0$
for all $ \mathbf x\in K$, then
\begin{equation}
\label{eq:sos3}
f(\mathbf x)-\gamma=p_0(\mathbf x)+\sum_{i=1}^m p_i(\mathbf x) g_i(\mathbf x),
\end{equation}
where $p_i(\mathbf x)$, $i=0,1,\ldots ,m$ have a sum of squares representation.
\index{semi-definite!relaxation}Theorem \ref{thm:putinar} does not
specify the degree of the polynomials $p_i(\mathbf x)$. Thus, we select a
bound $2d$ on the degree of the polynomials $p_i(\mathbf x)$, and we apply
Proposition \ref{prop:sdprepresentation} to each of the polynomials
$p_i(\mathbf x)$, that is, $p_i(\mathbf x)$ is a sum of squares if and only if
$p_i(\mathbf x)=\mathbf v_d(\mathbf x)^\top Q_i \mathbf v_d(\mathbf x)$ with $Q_i \succeq 0$,
$i=0,1,\ldots ,m$. Substituting to Eq. (\ref{eq:sos3}), we obtain that
$\gamma, Q_i$, $i=0,1,\ldots ,m$, satisfy linear equations
that we denote as $L(\gamma, Q_0,Q_1,\ldots ,Q_m)=0.$
Thus, we can find a lower bound to problem (\ref{eq:nonlinear})
by solving the \index{semi-definite!optimization}semi-definite
optimization problem
\begin{equation}
\label{eq:sos4}
\begin{array}{rl}
Z_d={\rm max} & \gamma \vspace{3pt}\\
\mbox{ st. } & L(\gamma, Q_0,Q_1,\ldots ,Q_m)=0, \vspace{3pt}\\
& Q_i \succeq 0,~i=0,1,\ldots ,m.
\end{array}
\end{equation}
Problem (\ref{eq:sos4}) involves semi-definite optimization
over $m+1$ $s\times s$ matrices. From the construction
we get the relation $Z_d\leq Z^*$. It turns out that
 as  $d$ increases, $Z_d$ converges to $Z^*$. Moreover,
for binary optimization, there exists a finite $d$ for which
$Z_d=Z^*$ \cite{Laurent01}.

Problem (\ref{eq:sos4}) provides a systematic way to find convergent
semi-definite relaxations to problem (\ref{eq:nonlinear}).
While the approach  is both general (it applies to very
general nonconvex problems including nonlinear mixed-integer
optimization problems) and insightful   from a theoretical
point of view, it is only  practical for  values of $d=1,2$, as
large scale semi-definite optimization problems cannot be
solved in practice.
 In many situations, however,
$Z_1$ or $Z_2$ provide strong bounds. Let us consider an example.

\begin{example}\label{exam:sos5}
Let us minimize
$f(x_1,x_2)=2x_1^4+2x_1^3x_2-x_1^2x_2^2+5x_2^4$ over $\R^2$. We
attempt to write
$$\begin{array}{rcl}
f(x_1,x_2)& = & 2x_1^4+2x_1^3x_2-x_1^2x_2^2+5x_2^4 \vspace{5pt}\\
& = & \left(\begin{array}{c}
x_1^2 \vspace{3pt}\\ x_2^2 \vspace{3pt}\\x_1x_2 \end{array} \right)^\top
\left[\begin{array}{lll}
q_{11} & q_{12} & q_{13} \vspace{3pt}\\
q_{12} & q_{22} & q_{23} \vspace{3pt}\\
q_{13} & q_{23} & q_{33}
\end{array}
\right]
\left(\begin{array}{c}
x_1^2 \vspace{3pt}\\ x_2^2 \vspace{3pt}\\x_1x_2 \end{array} \right)
\vspace{5pt}\\
& = & q_{11}x_1^4+q_{22}x_2^4+
(q_{13}+2q_{12})x_1^2x_2^2+2q_{13}x_1^3x_2+2q_{23}x_1x_2^3.
\end{array}$$
In order to have an identity, we obtain
$$q_{11}=2,~q_{22}=5,~q_{33}+2q_{12}=-1,~ 2q_{13}=2, ~q_{23}=0.$$
Using semi-definite optimization, we find a particular solution
such that $Q\succeq 0$ is given by
$$Q=\left[\begin{array}{rrr}
2 & -3 & 1 \vspace{3pt}\\
-3 & 5 & 0 \vspace{3pt}\\
1 & 0 & 5
\end{array}
\right]
=HH^\top,\qquad H=\frac{1}{\sqrt{2}}
\left[\begin{array}{rr}
2  & 0  \vspace{3pt}\\
-3 & 1  \vspace{3pt}\\
1  & 3
\end{array}
\right]. $$
It follows that  $f(x_1,x_2)=\frac{1}{2}(2x_1^2-3x_2^2+x_1x_2)^2+
\frac{1}{2}(x_2^2+3x_1x_2)^2,$
and thus  the optimal solution value is $\gamma^*=0$
and the optimal solution is $x_1^*=x_2^*=0$.
\end{example}

\subsection{Quadratic functions}
\label{s:quadratics}


In this section, we focus on instances of polynomial programming where the
functions are all quadratic.
The specific form of the
\emph{mixed-integer quadratically constrained programming} problem that we consider is
\begin{equation}
\begin{aligned}
  \hbox{min}\quad & q_0(\mathbf x)\\
  \hbox{s.t.}\quad & q(\mathbf x) \leq 0 \\
      & \mathbf l\le \mathbf x \le \mathbf u\\
  & x_i \in\R && \mbox{ for } i=1,\ldots,k\\
  &  x_i \in\Z && \mbox{ for } i=k+1,\ldots,n,\\
\end{aligned} \label{eq:MIQCP}\tag*{$(\mbox{MIQCP}[\mathbf l,\mathbf u])$}
\end{equation}
where $q_0\colon \R^n\to\R$  and $q\colon \R^n\to\R^m$
are quadratic,
 $\mathbf l, \mathbf u\in \Z^n$, and $\mathbf l\le \mathbf u$.
 We denote the continuous relaxation by  $(\mbox{MIQCP}_\R[\mathbf l,\mathbf u])$.
 We emphasize that we are not generally making any convexity/concavity assumptions on the
quadratic functions $q_i$, so when we do require any such assumptions
we will state so explicitly.

Of course one can write a binary constraint $y_i\in\{0,1\}$ as
the (nonconvex) quadratic inequality $y_i(1-y_i)\le 0$
in the bound-constrained variable $0\le y_i \le 1$.
So, in this way, the case of binary variables $y_i$ can be seen
as the special case of $(\mbox{MIQCP}[\mathbf l,\mathbf u])$ with
no discrete variables (i.e., $k=n$). So, in a sense, the topic of
\emph{mixed-binary quadratically constrained programming}
can be seen as a special case of \emph{(purely continuous) quadratically constrained programming.}
We are not saying that it is necessarily useful to do this from a computational viewpoint,
but it makes it clear that the scope of even the purely continuous
quadratic model includes quadratic models having both binary and continuous variables,
and in particular mixed-$\{0,1\}$ linear programming.

In addition to the natural mathematical interest in studying
mixed-integer quadratically constrained programming, there is a wealth of
applications that have motivated the development of practical approaches; for example:
Trimloss problems (see \cite{insitu}, for example),
portfolio optimization (see \cite{BonLej}, for example),
Max-Cut and other binary quadratic models (see \cite{RRW,rendl-rinaldi-wiegele-2008}
and the references therein).

In the remainder of this section, we describe some
recent work on practical
computational approaches to nonconvex quadratic optimization models.
Rather than attempt a detailed survey, our goal is
to present a few recent and promising techniques.
One could regard these techniques as belonging more to the
field of  global optimization, but in
Section \ref{s:global} we present
material on global optimization aimed at more
general unstructured nonlinear integer programming problems.

\subsubsection{Disjunctive programming}
It is not surprising that integer variables
in a mixed-integer \emph{nonlinear} program can be treated with
disjunctive programming \cite{Balas74J,Balas98J}.  A corresponding branch-and-cut method was first
described in \cite{stubbs-mehrotra:1999} in the context of 0/1 mixed convex
programming.

Here we describe an intriguing result from \cite{SBL1,SBL2,SBL3},
which shows that one can also make useful disjunctions from nonconvex quadratic
functions in a mixed-integer quadratically-constrained programming problem or even
in a purely continuous quadratically-constrained programming problem.
The starting point for this approach is that we can take a quadratic form $\mathbf x^\top A_i \mathbf x$
in $\mathbf x \in \R^n$,
and rewrite it via an extended formulation as the linear form $\langle A_i,\mathbf X\rangle$,
using the matrix variable $\mathbf X\in\R^{n\times n}$,
and the nonlinear equation $\mathbf X=\mathbf x \mathbf x^\top$. The standard approach
is to relax $\mathbf X=\mathbf x \mathbf x^\top$ to the convex inequality $\mathbf X \succeq \mathbf x \mathbf x^\top$.
But the approach of \cite{SBL1,SBL2,SBL3} involves working with the
nonconvex inequality $\mathbf X \preceq \mathbf x \mathbf x^\top$. This basic idea is as follows.
Let $\mathbf v\in\R^n$ be arbitrary (for now).
We have the equation
\[
\langle \mathbf v \mathbf v^\top , \mathbf X \rangle =
\langle \mathbf v \mathbf v^\top , \mathbf x \mathbf x^\top \rangle
= (\mathbf v^\top \mathbf x)^2 ~,
\]
which we relax as the \emph{concave}
inequality
\[
\tag*{$(\Omega)$}(\mathbf v^\top \mathbf x)^2 \ge \langle \mathbf v \mathbf v^\top , \mathbf X \rangle.
\]
If we have a point $(\hat{\mathbf x},\hat{\mathbf X})$ that
satisfies the convex inequality $\mathbf X \succeq \mathbf x \mathbf x^\top$, but for which
$\hat{\mathbf X}\not=\hat{\mathbf x}\hat{\mathbf x}^\top$~, then it is the case that
$\hat{\mathbf X}-\hat{\mathbf x}\hat{\mathbf x}^\top$
has a positive eigenvalue $\lambda$~. Let $\mathbf v$ denote a unit-length
eigenvector belonging to $\lambda$~. Then
\begin{eqnarray*}
\lambda & = & \lambda \|\mathbf v\|^2_2 \\
& = &
\langle \mathbf v \mathbf v^\top , \hat{\mathbf X}-\hat{\mathbf x}\hat{\mathbf x}^\top \rangle .
\end{eqnarray*}
So, $\lambda>0$ if and only if $(\mathbf v^\top \hat{\mathbf x})^2 <
\langle \mathbf v \mathbf v^\top ,
\hat{\mathbf X}
\rangle$~.
That is, every positive eigenvalue of $\hat{\mathbf X}-\hat{\mathbf x}\hat{\mathbf x}^\top$ yields an inequality of
the form
$(\Omega)$
that is violated by $(\hat{\mathbf x},\hat{\mathbf X})$~. Next, we make a disjunction on this
violated nonconvex inequality $(\Omega)$. First, we choose
a suitable polyhedral relaxation ${\mathcal P}$ of the feasible region, and
we let $[\eta_L,\eta_U]$ be the range of $\mathbf v^\top \mathbf x$ as $(\mathbf x,\mathbf X)$
varies over the relaxation  ${\mathcal P}$.
Next, we choose a value $\theta\in
(\eta_L,\eta_U)$ (e.g., the midpoint), and we get the polyhedral disjunction:
\[
\left\{ (\mathbf x,\mathbf X)\in {\mathcal P} ~:~ \begin{array}{l}
\eta_L(\mathbf v)\le \mathbf v^\top \mathbf x \le \theta \\
(\mathbf v^\top \mathbf x)(\eta_L(\mathbf v)+\theta) ~-~ \theta \eta_L(\mathbf v) \ge \langle \mathbf v \mathbf v^\top , \mathbf X \rangle
\end{array}
\right\}
\]

\[
\mbox{or}
\]

\[
\left\{ (\mathbf x,\mathbf X)\in {\mathcal P} ~:~ \begin{array}{l}
\theta \le \mathbf v^\top \mathbf x \le \eta_U(\mathbf v)\\
(\mathbf v^\top \mathbf x)(\eta_U(\mathbf v)+\theta) ~-~ \theta \eta_U(\mathbf v) \ge \langle \mathbf v \mathbf v^\top , \mathbf X \rangle.
\end{array}
\right\}~.
\]
Notice that the second part of the first (resp., second)
half of the disjunction
corresponds to a secant inequality over the interval
between the point $\theta$ and the lower (resp., upper) bound
for $\mathbf v^\top \mathbf x$. Finally, we use the linear-programming technology of
ordinary disjunctive programming to separate, via a linear inequality,
the point
$(\hat{\mathbf x},\hat{\mathbf X})$ from the convex closure of the two halves of the disjunction.
Details and extensions of this idea appear in  \cite{SBL1,SBL2,SBL3}.

\subsubsection{Branch and cut}

A branch-and-cut scheme for optimization of a nonconvex quadratic form over a box
was recently developed by Vandenbussche and Nemhauser \cite{1057787,1057788}.
They use a formulation of Balas
via linear programming
with complementarity conditions, based on the necessary optimality conditions of
continuous quadratic programming (see  \cite{BalasQuadPolar}). Specifically,
they consider the problem
\begin{equation}
\tag*{$(\mbox{BoxQP}[Q,\mathbf c])$}
\begin{array}{rl}
\min & \frac{1}{2} \mathbf x^\top Q \mathbf  x + \mathbf c^\top \mathbf x \\
\mbox{ s.t. } & \mathbf x\in [0,1]^n,
\end{array}
\end{equation}
where $Q$ is an $n\times n$ symmetric, non positive semi-definite matrix, and
$\mathbf c\in\R^n$. The KKT necessary optimality conditions for $(\mbox{BoxQP}[Q,\mathbf c])$ are
\begin{eqnarray}
&&\mathbf y-Q\mathbf x - \mathbf z = \mathbf c,\label{KKTBoxFirst}\\
&&\mathbf y^\top (\mathbf 1 - \mathbf x) = 0 \\
&&\mathbf z^\top \mathbf x =0 \\
&&\mathbf x\in [0,1]^n\\
&&\mathbf y,~ \mathbf z\in \R^n_+~.\label{KKTBoxLast}
\end{eqnarray}
Vandenbussche and Nemhauser, appealing to a result of Balas,
define ${\cal P}(Q,\mathbf c)$ as the polyhedron defined
as the \emph{convex hull} of
solutions to (\ref{KKTBoxFirst}-\ref{KKTBoxLast}),
and they
work with the reformulation of (BoxQP) as the linear program
\begin{equation}
\tag*{$(\mbox{Balas}[Q,\mathbf c])$}
\begin{array}{rl}
\min & \frac{1}{2} \mathbf c^\top \mathbf  x + \frac{1}{2} \mathbf 1^\top \mathbf  y\\
\mbox{ s.t. } & (\mathbf x,\mathbf y,\mathbf z)\in {\cal P}(Q,\mathbf c).
\end{array}
\end{equation}
The main tactic of Vandenbussche and Nemhauser is to develop
cutting planes for ${\cal P}(Q,c)$.

Burer and Vandenbussche
pursue a similar direction, but they allow general polyhedral constraints
and employ semi-definite relaxations \cite{BurerVandenbussche:2008}.

\subsubsection{Branch and bound}

Linderoth also looks  at quadratically-constrained
programs that are not convex \cite{Lin2005}.
He develops a novel method for repeatedly
partitioning the continuous feasible region
into the Cartesian product of triangles and rectangles.
What is particularly interesting is that to
do this effectively, Linderoth develops convex envelopes of
bilinear functions over rectangles and triangles (also
see Anstreicher and Burer's paper \cite{anstreicher.burer:07}),
and then he demonstrates that these envelopes involve
hyperbolic constraints which
can be reformulated as the second-order cone constraints.
It is interesting to compare this with the similar use
of second-order cone constraints for convex quadratics
(see Section \ref{PracSOCP}).

One can view the technique of Linderoth as being a
specialized  ``Spatial Branch-and-Bound Algorithm.''
In  Section \ref{s:global}
we will describe the Spatial Branch-and-Bound Algorithm
for global optimization
in its full generality.

\section{Global optimization}
\label{s:global}

In the present section we take up the subject of global optimization
of rather general nonlinear functions. This is an enormous subject, and
so we will point to just a couple of directions that we view as promising.
On the practical side, in Section \ref{s:sBB} we describe the
Spatial Branch-and-Bound Algorithm which is one of the most successful
computational approaches in this area. In Section \ref{s:univariate-opt},
from the viewpoint of complexity theory,
with a goal of trying to elucidate the boundary between tractable and
intractable, we describe some very recent work on global optimization
of a very general class of nonlinear functions over an independence
system.

\subsection{Spatial Branch-and-Bound}\label{s:sBB}
In this section we address methods for global optimization of rather
general mixed-integer nonlinear programs having non-convex relaxations. Again, to have any hope at all,
we assume that the variables are bounded. There is
a very large body of work on solution techniques in this space. We will not attempt to
make any kind of detailed survey. Rather we will describe
one very successful methodology, namely the
\emph{Spatial Branch-and-Bound Algorithm.} We refer to \cite{SP1999,TawS2002}
and the references therein.

The Spatial Branch-and-Bound Algorithm for mixed-integer nonlinear programming
 has many similarities to the ordinary branch-and-bound employed for
 the solution of mixed-integer linear programs, but there are many additional
 wrinkles. Moreover, the techniques can be integrated.
 In what follows, we will concentrate on how continuous nonlinearities
 are handled. We leave it to the reader to see how these techniques
 would be integrated with the associated techniques for
 mixed-integer linear programs.

 One main difference with the mixed-integer linear case is that all
 nonlinear functions in a  problem instance are symbolically and
 recursively decomposed via simple operators, until we arrive at
 simple functions. The simple operators
 should be in a limited library. For example:  sum, product, quotient, exponentiation, power,
 logarithm, sine, cosine, absolute value. Such a decomposition is
 usually represented via a collection of rooted directed acyclic graphs. At each root is
 a nonlinear function occurring in the problem formulation.
 Leaves are constants, affine functions and atomic variables.
 Each non-leaf node is thought of as an auxiliary variable
 and also as  representing a simple operator, and its children are
 the arguments of that operator.

 An inequality constraint in the problem
 formulation can be thought of as a bounding interval on a root.
 In addition, the objective function is associated with a root,
 and so lower and upper bounds on the optimal objective value
 can also be  thought of as a bounding interval on a root.
 Simple bounds on a variable in the problem
 formulation can be thought of as a bounding interval on a leaf.
 In this way, we have an extended-variable reformulation of the
 given problem.

 Bounds are propagated up and down each such rooted directed acyclic graph
 via interval arithmetic and a library of convex envelopes or at least
 linear convex relaxations of the graphs of simple nonlinear operators acting
 on one or two variables on simple domains (intervals for univariate
 operators and simple polygons for bivariate operators).
 So, in this way, we have a now tractable convex or even
 linear relaxation of the extended-variable reformulation, and
 this is used to get a lower bound on the minimum
 objective value.

 The deficiency in our relaxation is localized to
 the graphs of simple functions that are only approximated
 by convex sets. We can seek to improve bounds by branching
 on the interval for a variable and reconvexifying on the subproblems.
 For example, we may have a variable $v$ that is a convex function $f$ in a variable
 $w$ on an interval $[l,u]$. Then the convex envelope of the
 graph $G[l,u]:=\{(v,w) ~:~ v=f(w)\}$ is precisely
 \[
\tilde{G}[l,u] := \left \{(v,w) ~:~  f(w) \le v\le f(l) + \left( \frac{f(u)-f(l)}{u-l} \right)(w-l)  \right\}.
 \]
 We may find that at a solution of the relaxation, the values of
 the variables $(u,v)$, say $(\hat{v},\hat{w})\in\tilde G[l,u]$,
 are far from $G[l,u]$. In such a case, we can \emph{branch}
 by choosing a point $b\in[l,u]$ (perhaps at or near  $\hat{v}$),
 and forming two subproblems in which the bounding interval for
 $v$ is amended as  $[l,b]$ in one and $[b,u]$ in the other.
 The value in branching on such a continuous variable is that
 we now reconvexify on the subintervals, effectively replacing $\tilde{G}[l,u]$
 with the smaller set $\tilde{G}[l,b]\cup \tilde{G}[b,u]$. In particular,
 if we did choose $b=\hat{v}$, then $(\hat{v},\hat{w})\notin\tilde{G}[l,b]\cup \tilde{G}[l,b]$,
 and so the algorithm makes some progress.
 We note that a lot of work has gone into good branching strategies
 (see \cite{BelLeeLibMarWac2008} for example).

 Finally, a good Spatial Branch-and-Bound procedure should have an effective
 strategy for finding good feasible solutions, so as to improve the
 objective upper bound (for minimization problems). A good rudimentary
 strategy is to take the solution of a relaxation as a starting point
 for a continuous nonlinear-programming solver aimed at finding a locally-optimal
 solution of the continuous relaxation (of either the original or
 extended-variable formulation). Then if a feasible solution to this relaxation
 is obtained and if it happens to have integer values for the
 appropriate variables, then we have an opportunity to update the
 objective value upper bound. Alternatively, one can use a
 solver aimed mainly at mixed-integer nonlinear programs having
 \emph{convex} relaxation as a heuristic also from such a starting point.
 In fact, the Branch-and-Bound Algorithm in {\tt Bonmin} has options
 aimed at giving good solutions from such a starting point,
 even for non-convex problems.

The Spatial Branch-and-Bound Algorithm relies on
the rapid and tight convexification of simple functions
on simple domains. Therefore, considerable work has gone into
developing closed-form expressions for such envelopes.
This type of work has paralleled some
research in mixed-integer linear programming that has focused on determining
convex hulls for simple constraints.
Useful results include: univariate functions \cite{adjiman,smith,ceria-soares:1999},
univariate monomials of odd degree \cite{convenvbook,l_and_costas},
bilinear functions \cite{alkhayyal,mccormick}, trilinear functions \cite{meyer1},
so-called $(n-1)$-convex functions \cite{JMW}, and fractional terms \cite{tawarmalani3}.
Further relevant work includes algorithms exploiting
variable transformations and appropriate convex envelopes and relaxations.
For example, for the case of ``signomials'' (i.e., terms of
the form $a_0 x_1^{a_1} x_2^{a_2}\cdots x_n^{a_n}$, with
$a_i\in\R$), see \cite{PBW2008}.

We do not make any attempt to exhaustively review available software for global optimization.
Rather we just mention that state-of-the-art codes implementing
a Spatial Branch-and-Bound Algorithm include {\tt Baron} \cite{Sahin1996,TawS2002,TawS2004} and
the new open-source code {\tt Couenne} \cite{BelLeeLibMarWac2008}.

\subsection{Boundary cases of complexity}
\label{s:univariate-opt}

Now, we shift our attention back to the viewpoint of complexity theory.
Our goal is to sample a bit of the recent
work that is aimed at revealing the boundary between tractable and
intractable instances of nonlinear discrete optimization problems.
We describe some very recent work on global optimization
of a very general class of nonlinear functions over an independence
system (see \cite{Lee+Onn+Weismantel:08b}). Other work in this vein includes
\cite{Berstein+Lee+Maruri-Aguilar+Onn+Riccomagno+Weismantel+Wynn:08,Berstein+Lee+Onn+Weismantel}.

Specifically, we consider the problem of optimizing a nonlinear objective function
over a weighted independence system presented by a linear-optimization
oracle. While this problem is generally intractable, we are able to
provide a polynomial-time algorithm that determines an ``$r$-best'' solution
for nonlinear functions of the total weight of an independent set,
where $r$ is a constant that depends on certain Frobenius numbers
of the individual weights and is independent of the size of the ground
set.

An \emph{independence system} is a nonempty set  of vectors
$S\subseteq\{0,1\}^n$ with the property that $\mathbf x\in\{0,1\}^n$, $\mathbf x\leq
\mathbf y\in S$ implies $\mathbf x\in S$.
The general nonlinear optimization problem
over a multiply-weighted independence system is as follows.
Given an independence system $S\subseteq\{0,1\}^n$, weight
vectors $\mathbf w_1,\dots,\mathbf w_d\in\Z^n$, and a function $f:\Z^d\rightarrow\R$,
find $\mathbf x\in S$ minimizing the objective
$
f(\mathbf w_1^\top \mathbf x,\dots,\mathbf w_d^\top \mathbf x)
$.

The representation of the objective in the above composite form has several
advantages. First, for $d>1$, it can naturally be interpreted as
\emph{multi-criteria optimization}: the $d$ given weight vectors
$\mathbf w_1,\dots,\mathbf w_d$ represent $d$ different criteria, where the value of
$\mathbf x\in S$ under criterion $i$ is its $i$-th total weight
$\mathbf w_i^\top \mathbf x$ and the objective is to
minimize the ``balancing'' $f(\mathbf w_1^\top \mathbf x,\dots,\mathbf w_d^\top
\mathbf x)$ of the $d$ given criteria by the given function $f$. Second, it
allows us to classify nonlinear optimization problems into a hierarchy
of increasing generality and complexity: at the bottom lies standard
linear optimization, recovered with $d=1$ and $f$ the identity on
$\Z$; and at the top lies the problem of maximizing an arbitrary
function, which is typically intractable, arising with $d=n$ and
$\mathbf w_i=\mathbf 1_i$ the $i$-th standard unit vector in $\Z^n$ for all $i$.

The computational complexity of the problem depends on the number $d$
of weight vectors, on the weights $w_{i,j}$, on the type of function $f$
and its presentation, and on the type of independence system $S$ and
its presentation. For example, when $S$ is a \emph{matroid}, the
problem can be solved in polynomial time for any fixed $d$, any
$\{0,1,\dots,p\}$-valued weights $w_{i,j}$ with $p$ fixed, and any
function $f$ presented by a \emph{comparison oracle}, even when $S$ is
presented by a mere \emph{membership oracle}, see
\cite{Berstein+Lee+Maruri-Aguilar+Onn+Riccomagno+Weismantel+Wynn:08}.
Also, for example, when $S$ consists of the \emph{matchings} in a
given bipartite graph $G$, the problem can be solved in polynomial
time for any fixed $d$, any weights $w_{i,j}$ presented in unary, and
any \emph{concave} function $f$, see \cite{Berstein+Onn:08}; but on the other
hand, for \emph{convex} $f$, already with fixed $d=2$ and
$\{0,1\}$-valued weights $w_{i,j}$, the problem includes as a special
case the \emph{exact matching problem} whose complexity is
long open \cite{Mulmuley+Vazirani+Vazirani:87,Papadimitriou+Yanakakis:82}.

In view of the difficulty of the problem already for $d=2$, we take a
first step and concentrate on \emph{nonlinear optimization over a
(singly) weighted independence system}, that is, with $d=1$, single
weight vector $\mathbf w=(w_1,\dots,w_n)$, and univariate function
$f:\Z\rightarrow\R$. The function $f$ can be arbitrary and is
presented by a \emph{comparison oracle} that, queried on $\mathbf x,\mathbf y\in\Z$,
asserts whether or not $f(\mathbf x)\leq f(\mathbf y)$. The weights $w_j$ take on
values in a $p$-tuple $\mathbf a=(a_1,\dots,a_p)$ of positive integers.
Without loss of generality we assume that $\mathbf a=(a_1,\dots,a_p)$
is \emph{primitive}, by which we mean that the $a_i$ are distinct
positive integers whose greatest common divisor
$\gcd(\mathbf a):=\gcd(a_1,\dots,a_p)$ is $1$. The independence system $S$ is
presented by a \emph{linear-optimization oracle} that, queried on
vector $\mathbf c\in\Z^n$, returns an element $\mathbf x\in S$ that maximizes the
linear function $\mathbf c^\top \mathbf x=\sum_{j=1}^n c_jx_j$. It turns out that this
problem is already quite intriguing, and so we settle for an
approximative solution in the following sense, that is interesting
in its own right. For a nonnegative integer $r$, we say that $\mathbf x^*\in
S$ is an \emph{$r$-best solution} to the optimization problem over $S$
if there are at most $r$ better objective values attained by feasible
solutions. In particular, a $0$-best solution is optimal. Recall that
the \emph{Frobenius number} of a primitive $\mathbf a$ is the largest integer
$\mathrm{F}(\mathbf a)$ that is not expressible as a nonnegative integer combination
of the $a_i$. We prove the following theorem.

\begin{theorem}
\label{Theorem: Main theorem for univarite polynomial optimization}
For every primitive $p$-tuple $\mathbf a=(a_1,\dots,a_p)$, there is a constant $r(\mathbf a)$
and an algorithm that, given any independence system
$S\subseteq\{0,1\}^n$ presented by a linear-optimization oracle,
weight vector $\mathbf w\in\{a_1,\dots,a_p\}^n$, and function
$f:\Z\rightarrow\R$ presented by a comparison oracle, provides an
$r(\mathbf a)$-best solution to the nonlinear problem $\min\{f(\mathbf w^\top\mathbf x) ~:~ \mathbf x\in
S\}$, in time polynomial in $n$. Moreover:
\begin{enumerate}
\item
If $a_i$ divides $a_{i+1}$ for $i=1,\dots, p-1$, then the algorithm
provides an optimal solution.
\item
For $p=2$, that is, for $\mathbf a=(a_1,a_2)$, the algorithm provides an
$\mathrm{F}(\mathbf a)$-best solution.
\end{enumerate}
\end{theorem}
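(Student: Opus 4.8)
The plan is to reduce the objective to its only relevant statistic and then exploit the downward-closedness of $S$ together with the numerical-semigroup structure of $\mathbf a$. Partition the ground set as $[n]=N_1\sqcup\dots\sqcup N_p$, where $N_i=\{j:w_j=a_i\}$. For $\mathbf x\in S$ let $n_i(\mathbf x)=\sum_{j\in N_i}x_j$ be the number of chosen elements of weight $a_i$; then $\mathbf w^\top\mathbf x=\sum_{i=1}^p a_i\,n_i(\mathbf x)$, so $f(\mathbf w^\top\mathbf x)$ depends on $\mathbf x$ only through the \emph{value} $v=\sum_i a_i n_i(\mathbf x)$. Since $S$ is an independence system, the set of achievable count vectors $\mathcal N=\{(n_1(\mathbf x),\dots,n_p(\mathbf x)):\mathbf x\in S\}$ is downward closed, and I observe that the linear-optimization oracle, queried on class-constant objectives, maximizes linear functionals over $\mathcal N$. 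Because $f$ is arbitrary and given only by comparison, obtaining an $r$-best solution is equivalent to producing a set $T\subseteq S$ of solutions whose realized value-set misses at most $r$ of the achievable values $V=\{\mathbf w^\top\mathbf x:\mathbf x\in S\}$: any value we fail to certify could, for an adversarial $f$, beat everything in $T$.

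The algorithm I propose is then extremely simple. First query the oracle with $\mathbf c=\mathbf w$ to obtain a maximizer $\mathbf x^{\max}$ of $\mathbf w^\top\mathbf x$, with counts $M_i=n_i(\mathbf x^{\max})$ and top value $v^{\max}=\sum_i a_iM_i$. Next enumerate the ``box'': for every $(c_1,\dots,c_p)$ with $0\le c_i\le M_i$, switch off $M_i-c_i$ of the chosen coordinates of $\mathbf x^{\max}$ in each class $N_i$; downward-closedness guarantees the resulting vector lies in $S$ and realizes the value $\sum_i a_ic_i$. Evaluate $f$ on all these $\prod_i(M_i+1)\le (n+1)^p$ certified values (polynomially many for fixed $p$) and output the best. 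This uses one oracle call plus a polynomial number of comparison-oracle calls.

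The heart is showing the certified values miss at most a constant $r(\mathbf a)$ of $V$. The key tool is the value-preserving trade lattice spanned by the vectors $a_i\mathbf e_j-a_j\mathbf e_i$: applying such a trade leaves $\sum a_in_i$ unchanged while shifting mass between classes. Given any achievable $\mathbf n$ with value $v\le v^{\max}$, I plan to show that whenever $v^{\max}-v$ is large enough one can trade $\mathbf n$ into the box $\prod_i[0,M_i]$, so $v$ is certified; the slack in the relevant inequalities is exactly $(v^{\max}-v)/(\text{products of the }a_i)$, which forces an integer trade once $v^{\max}-v$ exceeds a bound depending only on $\mathbf a$. The cleanest way to pin down the exceptional set is the complementation symmetry $c_i\mapsto M_i-c_i$, under which $v$ is box-representable iff $v^{\max}-v$ is; for $v$ near $v^{\max}$ the caps $M_i$ are non-binding, so $v$ is certified exactly when $v^{\max}-v$ lies in the numerical semigroup $\langle a_1,\dots,a_p\rangle$. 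Hence a missed achievable value forces $v^{\max}-v$ to be a \emph{gap} of $\langle\mathbf a\rangle$, and the number of gaps (the genus) is finite and independent of $n$; this is the constant $r(\mathbf a)$.

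Finally I specialize. If $a_i\mid a_{i+1}$ for all $i$, then primitivity forces $\gcd(\mathbf a)=a_1=1$, so $\langle\mathbf a\rangle$ has no gaps and $r(\mathbf a)=0$: every achievable value is certified and the solution is optimal. For $p=2$ with $\gcd(a_1,a_2)=1$, Sylvester's count gives exactly $(a_1-1)(a_2-1)/2=(\mathrm{F}(\mathbf a)+1)/2\le \mathrm{F}(\mathbf a)$ gaps, all at most $\mathrm{F}(\mathbf a)$, yielding an $\mathrm{F}(\mathbf a)$-best (indeed slightly better) solution. I expect the main obstacle to be the general-$p$ miss bound when some $M_i$ is small, so that the box is ``thin'' in class $i$ and the caps $M_i$ bind even for values not near $v^{\max}$; the fix I anticipate is to also query a constant (depending only on $p$) number of auxiliary extreme solutions emphasizing the under-represented classes, and to bound the total miss by the Frobenius data of the sub-tuples of $\mathbf a$, keeping $r(\mathbf a)$ independent of $n$.
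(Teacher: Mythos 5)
There is a genuine gap, and it sits at the heart of your argument. The algorithm you propose --- one oracle call with $\mathbf c=\mathbf w$ to get a maximum-weight $\mathbf x^{\max}$, followed by enumeration of the box of count vectors below it --- is exactly the paper's na\"{\i}ve strategy (Algorithm~\ref{naive-strategy}), and the paper refutes precisely your central claim that this box misses only a constant number of achievable values. Take $\mathbf a=(1,2)$, $n=4m$, let $N_1$ be $2m$ coordinates of weight $1$ and $N_2$ be $2m$ coordinates of weight $2$, and let $S=\{\mathbf x\le\mathbf 1_{N_1}\}\cup\{\mathbf x\le\mathbf 1_{N_2}\}$ (Example~\ref{example}). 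The unique weight maximizer is $\mathbf 1_{N_2}$, so $M_1=0$, $M_2=2m$, and your box certifies only the even values $\{0,2,\dots,4m\}$, while $\mathbf w\cdot S$ also contains all of $\{1,3,\dots,2m-1\}$: the number of missed achievable values is $m=n/4$, not a constant, and by the necessary condition you yourself state (Proposition~\ref{image}) an adversarial $f$ defeats the algorithm. Note that $\mathbf a=(1,2)$ is both primitive and divisible, so this single example breaks your general bound \emph{and} your proof of part~1. The flaw in your trade-lattice analysis is the assumption that the caps $M_i$ are non-binding so that missed values correspond to gaps of the numerical semigroup $\langle\mathbf a\rangle$; here $\langle 1,2\rangle$ has no gaps at all, yet linearly many values are missed, because the set of achievable count vectors is an arbitrary downward-closed subset of $\Z_+^p$ with incomparable ``branches,'' not a box, and a single max-weight query sees only one corner of it.

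Your closing paragraph does anticipate the problem (``when some $M_i$ is small'') and gestures at the right kind of fix, but the fix is the actual content of the proof and is not supplied. The paper's argument does not merely query a few auxiliary extreme solutions: it partitions the independence system into suitably chosen pieces, applies a refined version of the na\"{\i}ve strategy separately to each piece, and uses the monoid generated by $\{a_1,\dots,a_p\}$ (via Frobenius numbers of subtuples of $\mathbf a$) to show that within each piece only a constant number of values can be missed; this is how the bound $r(\mathbf a)\le(2\max(\mathbf a))^p$ of Lemma~\ref{Bound-Corollary} arises. Your semigroup-gap heuristic and the Sylvester count for $p=2$ are the right local ingredients once one is inside a single piece with all caps large, but without the partition step the proposal does not prove the theorem.
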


In fact, we give an explicit upper bound on $r(\mathbf a)$ in terms of the
Frobenius numbers of certain subtuples derived from $\mathbf a$. An interesting
special case is that of $\mathbf a=(2,3)$. Because $\mathrm{F}(2,3)=1$, the solution provided
by our algorithm in that case is either optimal or second best.

The proof of Theorem \ref{Theorem: Main theorem for univarite polynomial
optimization} is pretty technical, so we only outline the main
ideas. Below we present a na\"{\i}ve solution strategy that
does not directly lead to a good approximation. However, this
na\"{\i}ve approach is used as a basic building block. One partitions the
independence system into suitable pieces, to each of which a suitable
refinement of the na\"{\i}ve strategy is applied
separately. Considering the monoid generated by $\{a_1,\dots,a_p\}$
allows one to show that the refined na\"{\i}ve strategy
applied to each piece gives a good approximation within that piece. In
this way, the approximation quality $r(\mathbf a)$ can be bounded as follows,
establishing a proof to Theorem \ref{Theorem: Main theorem for
univarite polynomial optimization}.

\begin{lemma}\label{Bound-Corollary}
Let $\mathbf a=(a_1,\dots,a_p)$ be any primitive $p$-tuple. Then the following hold:
\begin{enumerate}
\item
 An upper bound on $r(\mathbf a)$ is given by $r(\mathbf a)\leq \left(2\max(\mathbf a)\right)^p$.
\item
For divisible $\mathbf a$, we have $r(\mathbf a)=0$.
\item
For $p=2$, that is, for $\mathbf a=(a_1,a_2)$, we have $r(\mathbf a)=\mathrm{F}(\mathbf a)$.
\end{enumerate}
\end{lemma}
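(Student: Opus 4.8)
The plan is to reduce the problem to counting \emph{achievable weights} and then to bound, via Frobenius-type data, how many of them the algorithm can fail to witness. First I would exploit that the objective $f(\mathbf w^\top\mathbf x)$ depends on $\mathbf x\in S$ only through the \emph{type vector} $\mathbf m(\mathbf x)=(m_1,\dots,m_p)\in\Z_+^p$, where $m_i$ counts the chosen ground-set elements whose weight equals $a_i$; indeed $\mathbf w^\top\mathbf x=\mathbf a\cdot\mathbf m(\mathbf x)=\sum_i a_i m_i$. Since $S$ is an independence system, the set $T=\{\mathbf m(\mathbf x):\mathbf x\in S\}\subseteq\Z_+^p$ is down-closed (deleting elements only decreases coordinates), and assigning a common cost to all elements of a class turns the given linear-optimization oracle for $S$ into one for $T$. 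The crucial elementary observation is a \emph{witnessing-by-deletion} principle: from any $\mathbf m\in T$ produced by the oracle one obtains, by removing elements, a feasible solution of every weight in $R(\mathbf m):=\{\mathbf a\cdot\mathbf k:\mathbf 0\le\mathbf k\le\mathbf m\}$. Hence if the algorithm produces solutions witnessing all but at most $r$ of the achievable weights $V=\{\mathbf a\cdot\mathbf m:\mathbf m\in T\}$, then its best output is $r$-best: any strictly better feasible solution must realize one of the $\le r$ un-witnessed weights, and these account for at most $r$ distinct smaller values of $f$.

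The na\"{\i}ve strategy computes a maximum-weight solution $\bar{\mathbf m}$ (via the oracle with costs $\mathbf a$) and witnesses $R(\bar{\mathbf m})$. This alone is insufficient, because $T$ may have several Pareto-maximal elements realizing different trade-offs among the classes, so $V\setminus R(\bar{\mathbf m})$ can be large. I would refine it by partitioning $S$ (equivalently $T$) into finitely many pieces that \emph{fix the low-order parts} of the coordinates --- prescribing, for each class $i$, the count $m_i$ whenever it lies below a threshold of order $\max(\mathbf a)$, and treating it as ``large'' otherwise --- and then applying the na\"{\i}ve strategy to each piece, which is again an independence system accessed through the oracle. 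The number of pieces should depend only on $\mathbf a$, so the whole procedure makes polynomially many oracle calls. On a piece in which each free coordinate of the local maximizer is large, the witnessed set $R$ is, up to the complementation symmetry $\mathbf k\leftrightarrow\bar{\mathbf m}-\mathbf k$, cofinal in the monoid $M=\langle a_1,\dots,a_p\rangle$ below the piece-maximum $W^*$: every $v\le W^*$ with both $v\in M$ and $W^*-v\in M$ is witnessed, so the only achievable weights that can be missed are confined to a top band whose width is governed by the largest weight not representable within the available budget, i.e. a Frobenius number of $\mathbf a$ or of a subtuple of $\mathbf a$.

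Combining these counts across pieces would yield the three assertions. For the general bound, the missed weights on each piece correspond to type vectors trapped in a box of side at most $2\max(\mathbf a)$ in each of the $p$ coordinates, giving at most $(2\max(\mathbf a))^p$ of them and hence $r(\mathbf a)\le(2\max(\mathbf a))^p$. If $\mathbf a$ is divisible then, being primitive, $a_1=1$, so $M=\Z_+$ has no gaps and the complementation argument witnesses \emph{every} achievable weight up to each piece-maximum; thus nothing is missed and $r(\mathbf a)=0$. For $p=2$ the two-generator structure of $M=\langle a_1,a_2\rangle$ makes the analysis exact: representations of $v\in M$ are unique modulo the single relation $a_1 a_2=a_2 a_1$, the missed weights are in bijection with the gaps of $M$ in the relevant top interval, and their number is exactly $\mathrm{F}(a_1,a_2)=a_1a_2-a_1-a_2$, giving $r(\mathbf a)=\mathrm{F}(\mathbf a)$.

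The main obstacle is the uniform control of the un-witnessed part of $V$ independently of $n$: a priori $T$ can have many Pareto-maximal elements, and achievable weights coming from distinct trade-offs need not be reachable from any single near-maximal solution, so the partition must be engineered so that within each piece the witnessed weights genuinely cover all of $M\cap[0,W^*]$ except a Frobenius-bounded remainder. Pinning down the threshold, the piece count, and the resulting box precisely --- and sharpening the general box estimate to the exact value $\mathrm{F}(\mathbf a)$ when $p=2$ --- is the technical heart of the argument, with the divisible case emerging as the clean degenerate instance in which the monoid has no gaps at all.
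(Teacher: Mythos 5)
Your proposal follows the same route that the paper itself sketches: the na\"{\i}ve strategy (Algorithm~\ref{naive-strategy}) as a building block, the witnessing-by-deletion observation behind Proposition~\ref{image} and equation~(\ref{approximation}), a partition of $S$ into pieces depending only on $\mathbf a$, and a Frobenius/monoid analysis of what each piece's local maximizer fails to witness. Note, however, that the paper does not actually prove Lemma~\ref{Bound-Corollary} in this chapter --- it explicitly defers the technical details to \cite{Lee+Onn+Weismantel:08b} --- so there is no full in-paper argument to match your elaboration against; what you have written is a more detailed version of the same outline, not a proof.

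Two substantive issues remain. First, the step you yourself flag as the ``technical heart'' is genuinely missing: you never construct the partition (thresholds, which coordinates are fixed, why the piece count depends only on $\mathbf a$), and the claim that within each piece the witnessed set covers all of $M\cap[0,W^*]$ outside a Frobenius-bounded top band is asserted, not derived; the paper's Example~\ref{example} shows exactly why this covering property is delicate and cannot be read off from the na\"{\i}ve strategy alone. Without it, neither the box count $(2\max(\mathbf a))^p$ nor the divisible case follows. Second, in case~3 your sentence identifying the missed weights with ``the gaps of $M$'' whose number is ``exactly $\mathrm{F}(a_1,a_2)=a_1a_2-a_1-a_2$'' conflates two different quantities: the number of non-representable integers of $\langle a_1,a_2\rangle$ is $(a_1-1)(a_2-1)/2$, not $\mathrm{F}(a_1,a_2)$ (compare $(3,5)$: four gaps versus $\mathrm{F}=7$). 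The correct mechanism, consistent with your earlier ``top band'' remark, is that the missed values lie in a window of length $\mathrm{F}(\mathbf a)$ below the piece maximum, so at most $\mathrm{F}(\mathbf a)$ of them exist; you should replace the bijection-with-gaps claim by that counting argument.
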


Before we continue, let us fix some notation. The \emph{indicator} of
a subset $J\subseteq N$ is the vector $\mathbf 1_J:=\sum_{j\in J}\mathbf
1_j\in\{0,1\}^n$, so that $\mathop{\mathrm{supp}}(\mathbf 1_J)=J$. Unless otherwise
specified, $\mathbf x$ denotes an element of $\{0,1\}^n$ and
$\bm{\lambda}, \bm{\tau},\bm{\nu}$ denote elements of $\Z^p_+$. Throughout,
$\mathbf a=(a_1,\dots,a_p)$ is a \emph{primitive} $p$-tuple. We will be
working with weights taking values in $\mathbf a$, that is, vectors
$\mathbf w\in\{a_1,\dots,a_p\}^n$. With such a weight vector $w$ being clear
from the context, we let $N_i:=\{j\in N ~:~ w_j=a_i\}$ for
$i=1,\dots,p$, so that $N=\biguplus_{i=1}^p N_i$. For
$\mathbf x\in\{0,1\}^n$ we let $\lambda_i(\mathbf x):=\mathopen|\mathop{\mathrm{supp}}(\mathbf x)\cap N_i\mathclose|$ for
$i=1,\dots,p$, and $\bm{\lambda}(\mathbf x):=(\lambda_1(\mathbf x),\dots,\lambda_p(\mathbf x))$,
so that $\mathbf w^\top \mathbf x=\bm{\lambda}(\mathbf x)^\top \mathbf a$. For integers
$z,s\in\Z$ and a set of integers $Z\subseteq \Z$, we define
$z+sZ:=\{z+sx ~:~ x\in Z\}$.

Let us now present the na\"{\i}ve strategy to solve the univariate
nonlinear problem $\min\{f(\mathbf w^\top \mathbf x) ~:~ \mathbf x\in S\}$. Consider a set
$S\subseteq\{0,1\}^n$, weight vector  $\mathbf w\in\{a_1,\dots,a_p\}^n$, and
function $f:\Z\rightarrow\R$ presented by a comparison oracle.
Define the \emph{image} of $S$ under $\mathbf w$ to be the set of values $\mathbf w^\top \mathbf x$
taken by elements of $S$; we denote it by $\mathbf w\cdot S$.

We point out the following simple observation.

\begin{proposition}\label{image}
A necessary condition for any algorithm to find an $r$-best solution to
the problem $\min\{f(\mathbf w^\top \mathbf x) ~:~ x\in S\}$,
where the function~$f$ is presented by a comparison oracle only,
is that it computes all
but at most $r$ values of the image $\mathbf w\cdot S$ of $S$ under $\mathbf w$.
\end{proposition}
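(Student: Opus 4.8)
The plan is a comparison-oracle adversary argument that exploits the weakness of presenting $f$ only through comparisons. The key structural observation is that the objective $f(\mathbf w^\top\mathbf x)$ depends on a feasible $\mathbf x\in S$ only through its image value $\mathbf w^\top\mathbf x\in\mathbf w\cdot S$, and that the algorithm's sole access to $f$ is through comparisons of the form ``is $f(v)\le f(v')$?''. To submit a value $v$ to the comparison oracle, the algorithm must first have produced it as $\mathbf w^\top\mathbf x$ for some $\mathbf x\in S$ it has found — that is, it must have \emph{computed} $v$. Hence for any image value it has \emph{not} computed, it has made no oracle query involving that value and therefore has no information whatsoever about $f$ there. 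I would make this the first step: formalize ``computes a value'' as ``exhibits a feasible $\mathbf x$ attaining it,'' and record that the entire execution (queries issued, answers received, solution returned) is unchanged if $f$ is redefined on image values that were never computed.

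The second step is to run the algorithm and suppose, toward a contradiction, that it terminates having computed a set $V'$ of image values while leaving a set $U=(\mathbf w\cdot S)\setminus V'$ of at least $r+1$ values uncomputed; let $\mathbf x^*$ be its output and $v^*=\mathbf w^\top\mathbf x^*$. By the observation above I am free to redefine $f$ on all of $U$ without altering the run, and I use this freedom to plant feasible solutions strictly better (smaller) than $\mathbf x^*$. If $v^*\in V'$, I assign the $\ge r+1$ values of $U$ distinct $f$-values all lying below $f(v^*)$; then more than $r$ distinct objective values beat $f(\mathbf w^\top\mathbf x^*)$, so $\mathbf x^*$ is not $r$-best, contradicting correctness of the algorithm for this (consistent) choice of $f$.

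The step I expect to be the main obstacle is the remaining case $v^*\in U$, in which the returned solution's own value is one of the uncomputed ones and hence ``consumes'' one element of $U$. Here I would additionally set $f(v^*)$ above every value of $f$ on $V'$ — again a free choice, since $v^*$ was never queried — and push the $\ge r$ values of $U\setminus\{v^*\}$ below everything; then the objective values beating $\mathbf x^*$ comprise all $|V'|$ computed ones together with these $\ge r$ small ones, exceeding $r$ whenever $V'\neq\emptyset$ or $|U|\ge r+2$. The only configuration left uncovered is $V'=\emptyset$ with $|\mathbf w\cdot S|=r+1$, but there the image contains only $r+1$ distinct values, so every feasible solution is automatically $r$-best and the required lower bound ``compute all but $r$ values'' holds vacuously. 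Collecting these cases establishes that computing all but at most $r$ values of $\mathbf w\cdot S$ is indeed necessary.
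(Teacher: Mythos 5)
The paper offers no proof of this proposition at all---it is introduced with ``we point out the following simple observation''---so there is no argument of the authors' to compare yours against; your adversary formalization is the natural way to make the observation precise. There is, however, a genuine gap at your very first step. You assert that to submit a value $v$ to the comparison oracle the algorithm must first have produced $v$ as $\mathbf w^\top\mathbf x$ for some $\mathbf x\in S$ it has found. The oracle as defined in the paper is queried on \emph{arbitrary} pairs of integers, so an algorithm is free to query $f$ on every integer in $\{0,1,\dots,\mathbf w^\top\bar{\mathbf x}\}$ (a range of size polynomial in $n$ for fixed $\mathbf a$) and learn the complete comparison order of $f$ there without ever exhibiting a feasible point attaining any of those values. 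For such an algorithm your set $U$ of ``uncomputed'' image values (under your definition: not attained by an exhibited feasible $\mathbf x$) may consist entirely of values that \emph{were} queried, and then your key invariance claim---that the execution is unchanged if $f$ is redefined on $U$---is false, since redefining $f$ at a queried point changes the oracle's answers. The contradiction in your second paragraph therefore does not go through for general algorithms.

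The repair is to broaden ``computes $v$'' to mean that $v$ is either submitted to the comparison oracle or realized as $\mathbf w^\top\mathbf x$ for an exhibited feasible $\mathbf x$. With that reading the invariance holds by construction, your case analysis applies verbatim, and the resulting statement (the algorithm must \emph{touch} all but at most $r$ image values) is exactly what the paper needs where the proposition is used, namely to show that the na\"{\i}ve strategy---which only ever examines $\{\mathbf w^\top\mathbf x : \mathbf x\leq\bar{\mathbf x}\}$---cannot guarantee a bounded approximation in Example~\ref{example}. Two smaller points: the case $v^*\in U$ that you flag as the main obstacle cannot actually occur, because returning $\mathbf x^*$ exhibits a feasible point attaining $v^*$, so $v^*$ is computed under any reasonable convention and your case split collapses to the first case; and for the same reason the leftover configuration $V'=\emptyset$ is impossible rather than ``vacuously fine''---as you state it, an algorithm computing zero values while succeeding would in fact contradict the proposition when $|\mathbf w\cdot S|=r+1$, since the condition there demands computing at least one value.
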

Note that this necessary condition is also sufficient for computing the
\emph{objective value} $f(\mathbf w^\top \mathbf x^*)$ of an $r$-best solution, but not
for computing an actual $r$-best solution $\mathbf x^*\in S$, which may be harder.
Any point ${\bar{\mathbf x}}$ attaining $\max\{\mathbf w^\top \mathbf x ~:~ \mathbf x\in S\}$ provides
an approximation of the image given by
\begin{equation}\label{approximation}
\{\mathbf w^\top \mathbf x ~:~ \mathbf x\leq {\bar {\mathbf x}}\}\subseteq \mathbf w\cdot
S\subseteq\{0,1,\dots,\mathbf w^\top {\bar {\mathbf x}}\}\ .
\end{equation}
This suggests the following natural na\"{\i}ve strategy for finding an
approximative solution to the optimization problem over an independence
system $S$ that is presented by a linear-optimization oracle.

\begin{algorithm}
\label{naive-strategy}

{\bf (Na\"{\i}ve Strategy)}

\noindent
{\bf input} Independence system $S\subseteq\{0,1\}^n$ presented by a
linear-optimization oracle,
$f:\Z\rightarrow\R$ presented by a comparison oracle,
and  $\mathbf w\in\{a_1,\ldots,a_p\}^n$\;

\noindent
{\bf obtain }${\bar {\mathbf x}}$ attaining $\max\{\mathbf w^\top \mathbf x ~:~ \mathbf x\in S\}$
using the linear-optimization oracle for $S$\;

\noindent
{\bf  output} $\mathbf x^*$ as one attaining $\min\{f(\mathbf w^\top \mathbf x) ~:~  \mathbf x\leq
{\bar { \mathbf x}}\}$ using the algorithm of Lemma \ref{strategy-lemma} below.

\end{algorithm}

Unfortunately, as the next example shows, the number of values of the
image that are missing from the approximating set on the left-hand
side of equation (\ref{approximation}) cannot generally be bounded by
any constant. So by Proposition \ref{image}, this strategy cannot be
used \emph{as is} to obtain a provably good approximation.

\begin{example}\label{example}
Let $ \mathbf a:=(1,2)$, $n:=4m$, $ \mathbf  y:=\sum_{i=1}^{2m}  \mathbf 1_i$,
$ \mathbf z:=\sum_{i=2m+1}^{4m}  \mathbf 1_i$, and $ \mathbf w:= \mathbf y+2 \mathbf z$, that is,
$$ \mathbf y \ =\ (1,\dots,1,0,\dots,0)\,,\quad  \mathbf z \ =\ (0,\dots,0,1,\dots,1)\,,\quad
 \mathbf w \ =\ (1,\dots,1,2,\dots,2)\,,$$
define $f$ on $\Z$ by
\[
f(k):=
\left\{
  \begin{array}{ll}
    k, & \hbox{$k$ odd;} \\
    2m, & \hbox{$k$ even,}
  \end{array}
\right.
\]
and let $S$ be the independence system
\[
S\ :=\ \{ \mathbf x\in\{0,1\}^n ~:~  \mathbf x\leq  \mathbf y\}\ \cup\ \{ \mathbf x\in\{0,1\}^n ~:~  \mathbf x\leq  \mathbf z\}.
\]
Then the unique optimal solution of the linear-objective
problem $\max\{ \mathbf w^\top  \mathbf x ~:~  \mathbf x\in S\}$ is ${\bar  {\mathbf x}}:= \mathbf z$,
with $ \mathbf w^\top {\bar  {\mathbf x}}=4m$, and therefore
\begin{eqnarray*}
&&\{ \mathbf w^\top  \mathbf x ~:~  \mathbf  x\leq {\bar  {\mathbf x}}\}\ =\ \{2i ~:~ i=0,1,\dots,2m\}
\mbox{ and}\\
&& \mathbf w\cdot S \ =\ \{i ~:~ i=0,1,\dots,2m\}\ \cup\ \{2i ~:~ i=0,1,\dots,2m\}.
\end{eqnarray*}
So all $m$ odd values (i.e., $1,3,\ldots,2m-1$)
in the image $ \mathbf w\cdot S$ are missing from the
approximating set $\{ \mathbf w^\top  \mathbf x ~:~  \mathbf x\leq {\bar  {\mathbf x}}\}$ on the left-hand side
of (\ref{approximation}), and $ \mathbf x^*$ attaining $\min\{f( \mathbf w^\top  \mathbf x)
~:~  \mathbf x\leq{\bar  {\mathbf x}}\}$ output by the above strategy has
objective value $f( \mathbf w^\top  \mathbf x^*)=2m$, while there are
$m={n\over 4}$ better objective values (i.e., $1,3,\ldots,2m-1$)
attainable by feasible points (e.g., $\sum_{i=1}^k  \mathbf  1_i$,
for $k=1,3,\ldots,2m-1$).
\end{example}

Nonetheless, a more sophisticated refinement of the na\"{\i}ve
strategy, applied repeatedly to several suitably chosen subsets of $S$
rather than $S$ itself, will lead to a good approximation.
Note that the na\"{\i}ve strategy can be efficiently implemented as follows.

\begin{lemma}\label{strategy-lemma}
For every fixed $p$-tuple $ \mathbf a$, there is a polynomial-time algorithm that,
given univariate function $f:\Z\rightarrow\R$ presented by a comparison oracle,
weight vector $ \mathbf w\in\{a_1,\dots,a_p\}^n$, and $\bar  {\mathbf x}\in\{0,1\}^n$, solves
$\min\{f( \mathbf w^\top  \mathbf x) ~:~  \mathbf x\leq {\bar  {\mathbf x}}\}$.
\end{lemma}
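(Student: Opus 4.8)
The plan is to exploit the fact that the comparison oracle for $f$ only ever sees the single integer $\mathbf w^\top \mathbf x$, so the problem collapses to a search over the (few) distinct weight-values attainable below $\bar{\mathbf x}$. First I would partition the support of $\bar{\mathbf x}$ according to the weight classes $N_i=\{j : w_j=a_i\}$ and set $\bar\lambda_i := |\mathop{\mathrm{supp}}(\bar{\mathbf x})\cap N_i|$. A vector $\mathbf x\le\bar{\mathbf x}$ is obtained precisely by choosing, for each $i$, some number $\mu_i\in\{0,1,\dots,\bar\lambda_i\}$ of coordinates from $\mathop{\mathrm{supp}}(\bar{\mathbf x})\cap N_i$ to set to $1$; its weight is then $\mathbf w^\top \mathbf x=\sum_{i=1}^p a_i\mu_i$, which depends on $\mathbf x$ only through the tuple $\bm\mu=(\mu_1,\dots,\mu_p)$. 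Hence the image $\{\mathbf w^\top \mathbf x : \mathbf x\le\bar{\mathbf x}\}$ is contained in $\{\sum_i a_i\mu_i : 0\le\mu_i\le\bar\lambda_i\}$, and the number of admissible tuples $\bm\mu$ is at most $\prod_{i=1}^p(\bar\lambda_i+1)\le(n+1)^p$, which is polynomial in $n$ for fixed $p$.

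The algorithm then enumerates all such tuples $\bm\mu$. For each one it records the integer $v(\bm\mu):=\sum_{i=1}^p a_i\mu_i$ together with an explicit representative $\mathbf x(\bm\mu)\le\bar{\mathbf x}$ realizing it (pick any $\mu_i$ of the indices in $\mathop{\mathrm{supp}}(\bar{\mathbf x})\cap N_i$, for each $i$), which costs $\mathrm{O}(n)$ per tuple. To locate a minimizer of $f$ over these values I would then perform a single linear scan that maintains a current-best tuple $\bm\mu^{\mathrm{best}}$ and updates it by one call to the comparison oracle on the pair $\bigl(v(\bm\mu),\,v(\bm\mu^{\mathrm{best}})\bigr)$; this uses at most $(n+1)^p$ oracle calls. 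Outputting $\mathbf x(\bm\mu^*)$ for the surviving tuple $\bm\mu^*$ yields a point attaining $\min\{f(\mathbf w^\top \mathbf x):\mathbf x\le\bar{\mathbf x}\}$. All arithmetic is on numbers of size polynomial in the input, so the total running time is polynomial in $n$.

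Since the statement is essentially a bounded enumeration, there is no serious analytic obstacle; the only two points that need care are both consequences of the oracle presentation of $f$. First, because $f$ is accessible solely through comparisons, the minimization must be carried out entirely by pairwise oracle queries rather than by numerically evaluating $f$ — this is handled by the linear scan above. Second, the lemma asks for an actual minimizing vector and not merely the optimal value, which is why I carry a representative $\mathbf x(\bm\mu)$ alongside each value $v(\bm\mu)$ throughout the enumeration. The key quantitative input, and the reason the procedure is efficient, is simply that fixing $p$ bounds the number of attainable weight tuples by $(n+1)^p$.
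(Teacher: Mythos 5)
Your proposal is correct and matches the paper's proof essentially step for step: both partition $\mathop{\mathrm{supp}}(\bar{\mathbf x})$ into the classes $N_i$, enumerate the at most $(n+1)^p$ count-tuples, realize each by zeroing out entries of $\bar{\mathbf x}$, and select the best via the comparison oracle. No substantive differences to report.
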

\begin{proof}
Consider the following algorithm:

\begin{algorithm}
\label{strategy-algorithm}

{\bf input} function $f:\Z\rightarrow\R$ presented by a comparison oracle,
$ \mathbf w\in\{a_1,\ldots,a_p\}^n$ and $\bar {\mathbf x}\in\{0,1\}^n$\;

{\bf let} $N_i:=\{j : w_j=a_i\}$ and
$\tau_i:=\lambda_i({\bar  {\mathbf x}})=\mathopen|\mathop{\mathrm{supp}}({\bar  {\mathbf x}})\cap N_i\mathclose|,\,$ $i=1,\dots,p$\;

For {\emph{every choice of} $ \bm{\nu}=(\nu_1,\dots,\nu_p)\leq(\tau_1,\dots,\tau_p)= \bm{\tau}$}{

{\bf  determine} some $ \mathbf x_{\bm{\nu}}\leq{\bar  {\mathbf x}}$
with $\lambda_i( \mathbf x_{\bm{\nu}})=\mathopen|\mathop{\mathrm{supp}}( \mathbf x_{\bm{\nu}})\cap N_i\mathclose|=\nu_i,\,$ $i=1,\dots,p$\;}


{\bf  output} $ \mathbf x^*$ as one minimizing $f( \mathbf w^\top  \mathbf x)$ among the
$ \mathbf x_{\bm{\nu}}$ by using the comparison oracle of $f$.


\end{algorithm}

As the value $ \mathbf w^\top  \mathbf x$ depends only on the cardinalities
$\mathopen|\mathop{\mathrm{supp}}( \mathbf x)\cap N_i\mathclose|,\,$ $i=1,\dots,p$, it is clear that
\[
\{ \mathbf w^\top  \mathbf x ~:~  \mathbf x\leq {\bar  {\mathbf x}}\} = \{ \mathbf w^\top  \mathbf x_{\bm{\nu}} ~:~  \bm{\nu}\leq \bm{\tau}\}.
\]
Clearly, for each choice $ {\bm{\nu}}\leq {\bm{\tau}}$ it is easy to determine some
$ \mathbf x_{\bm{\nu}}\leq{\bar  {\mathbf x}}$ by zeroing out suitable entries of $\bar{\mathbf x}$. The
number of choices ${\bm{\nu}}\leq {\bm{\tau}}$ and hence of loop iterations and
comparison-oracle queries of $f$ to determine $ \mathbf x^*$ is
\[
\prod_{i=1}^p (\tau_i+1)\ \leq\ (n+1)^p.
\]
\end{proof}

\section{Conclusions}
\label{s:conclusions}

\begin{table}[t]
  \caption{Computational complexity and algorithms for nonlinear integer
    optimization.}
  \label{t:overview-nonlinear-over-nonlinear}
  \begin{center}
    \def\arraystretch{1.4}
    \begin{tabular}{@{}p{.14\linewidth}p{.29\linewidth}p{.29\linewidth}p{.25\linewidth}@{}} \toprule
      & \multicolumn{3}{c}{
        Constraints}\\
      \cmidrule{2-4}
      \def\arraystretch{1.0}\smash{\begin{tabular}[b]{@{}l@{}}Objective\\function
        \end{tabular}}%
      & Linear & Convex Polynomial & Arbitrary Polynomial
      \\[-2.5ex]
      \midrule
      Linear
      & {\RaggedRight
         \textbf{Polynomial-time in fixed dimension:}
         \begin{enumerate}[--]
         \item Lenstra's algorithm \cite{Lenstra83}
         \item Generalized basis
           re\-duc\-tion, Lov\'asz--Scarf \cite{lovasz-scarf:92}
         \item Short rational generating
           functions, Barvinok \cite{Barvinok94}
         \end{enumerate}}
      & \begin{minipage}[t]{1\linewidth}
           \RaggedRight
           \textbf{Polynomial-time in fixed dimension:}

           Lenstra-type algorithms \psref{s:convex-min-fixed-dim}
           \begin{enumerate}[--]
           \item Khachiyan--Porkolab \cite{khachiyan-porkolab:00}
           \item Heinz \cite{heinz-2005:integer-quasiconvex}
           \end{enumerate}
           \par
         \end{minipage}
      &
      {\raggedright
         \textbf{Incomputable:} Hilbert's 10th problem, Matiyasevich
         \cite{matiyasevich-1970}
         \psref{s:overview}, even for:
         \begin{enumerate}[--]
         \item quadratic constraints,
           Jeroslow~\cite{jeroslow-1973:quadratic-ip-uncomputable}
         \item fixed dimension~10,
           Matiyasevich~\cite{jones-1982}
         \end{enumerate}
         \smallskip

       }
      \\[-3ex]
         \RaggedRight Convex max \psref{s:convex-max}
         & \begin{minipage}[t]{1\linewidth}
           \RaggedRight
           \textbf{Polynomial-time in fixed dimension:}
           Cook et al. \cite{cook-hartmann-kannan-mcdiarmid-1992}
           \psref{s:convex-max-fixed-dim}


         \end{minipage}
         & 
         & \textbf{Incomputable} \psref{s:overview}
         \\[7ex]
      \RaggedRight Convex min \psref{s:convex-min}
       & \begin{minipage}[t]{2\linewidth}
           \RaggedRight
           \textbf{Polynomial-time in fixed dimension:}
           Lenstra-type algorithms: Khachiyan--Porkolab \cite{khachiyan-porkolab:00},
           Heinz \cite{heinz-2005:integer-quasiconvex}
           \psref{s:convex-min-fixed-dim}
         \end{minipage}
       &
       & \textbf{Incomputable} \psref{s:overview}
       \\[4ex]
         \RaggedRight Arbitrary Polynomial \psref{s:general-polynomial}
       &
       \begin{minipage}[t]{2\linewidth}
         {\RaggedRight\textbf{NP-hard, inapproximable,}
           even for quadratic forms
           over hypercubes:
           {\textsc{max-cut}}, H\aa stad \cite{Hastad:inapprox97}
           \psref{s:overview}\smallskip\par
           \textbf{NP-hard,} even for fixed dimension~$2$, degree~$4$
           \psref{s:overview}\par}
         \end{minipage}\medskip

       \begin{minipage}[t]{1\linewidth}
         {\RaggedRight\textbf{{\small FPTAS} in
             fixed dimension:}
           Short rational generating functions, De Loera
           et~al.~\cite{deloera-hemmecke-koeppe-weismantel:intpoly-fixeddim}
           \psref{s:fptas}
           \par}

       \end{minipage}
       &
       & \textbf{Incomputable} \psref{s:overview}

       \\[17ex]


      \bottomrule
    \end{tabular}
  \end{center}
\end{table}

In this chapter, we hope to have succeeded in reviewing mixed-integer
nonlinear programming from two important viewpoints.

We have reviewed the computational complexity of several important
classes of mixed-integer nonlinear programs.  Some of the negative complexity
results (incomputability, NP-hardness) that appeared in
Section~\ref{s:overview} have been supplemented by polynomiality or
approximability results in fixed dimension.
Table~\ref{t:overview-nonlinear-over-nonlinear} gives a summary.  In addition
to that, and not shown in the table, we have explored the boundary between
tractable and intractable problems, by highlighting interesting cases in
varying dimension where still polynomiality results can be obtained.

Additionally, we have reviewed a selection of practical algorithms that seem to have
the greatest potential from today's point of view.  Many of these algorithms,
at their core, are aimed at \emph{integer convex minimization}.  Here we have
nonlinear branch-and-bound, outer approximation, the Quesada--Grossman
algorithm, hybrid algorithms, and generalized Benders decomposition. As we
have reported, such
approaches can be specialized and enhanced for problems with SDP constraints,
SOCP constraints, and (convex) quadratics.  For \emph{integer polynomial
  programming} (without convexity assumptions), the toolbox of
Positivstellens\"atze and SOS programming is available.  For the case of
\emph{quadratics} (without convexity assumptions), specialized versions of
disjunctive programming, branch-and-cut, and branch-and-bound have been devised.
Finally, for general \emph{global optimization}, spatial branch-and-bound is
available as a technique, which relies heavily on
convexification methods. 

It is our hope that, by presenting these two viewpoints to the
interested reader, this chapter will help to create a synergy between both
viewpoints in the near future.  May this lead to
a better understanding of the field, and to much better algorithms than what
we have today!

\section*{Acknowledgments}

We would like to thank our coauthors, Jes\'us De Loera
and Shmuel
Onn, for their permission to base the presentation of some of the material in
this chapter on our joint papers
\cite{DeLoera+Hemmecke+Onn+Weismantel:08,Lee+Onn+Weismantel:08,Lee+Onn+Weismantel:08b}.


\end{document}
